\DeclareMathOperator*{\colim}{colim}
\DeclareMathOperator{\cpt}{\pitchfork}
\theoremstyle{plain}
  \newtheorem{thm}{Theorem}[section]
  \newtheorem{lem}[thm]{Lemma}
  \newtheorem{prop}[thm]{Proposition}
  \newtheorem{cor}[thm]{Corollary}
\theoremstyle{definition}
  \newtheorem{defn}[thm]{Definition}
  \newtheorem{example}[thm]{Example}
  \newtheorem{rem}[thm]{Remark}
\newtheorem{con}[thm]{Convention}
\newcommand{\ra}{\rightarrow}
\newcommand{\lra}{\longrightarrow}
\newcommand{\dl}{{([0,\infty],d_L)}}
\newcommand{\dr}{{([0,\infty],d_R)}}
\newcommand{\sy}{{\sf y}}
\newcommand{\bv}{\bigvee}
\def\oto{{\bfig\morphism<180,0>[\mkern-4mu`\mkern-4mu;]\place(86,0)[\circ]\efig}}
\begin{document}
\title{A Hofmann-Mislove theorem for approach spaces}

\author{Junche Yu, Dexue Zhang}
\date{}
\maketitle

\begin{abstract}
The Hofmann-Mislove theorem says that the ordered set of open filters of the open-set lattice of a sober topological space is isomorphic to the  ordered set of compact saturated sets (ordered by reverse inclusion) of that space. This paper concerns a metric analogy of this result. To this end, the notion of compact functions of approach spaces  is introduced. Such functions are an analog of compact subsets in the enriched context.  It is shown that for  a sober approach space $X$, the metric space  of proper open $[0,\infty]$-filters of the metric space of   upper regular functions of $X$ is isomorphic to the opposite of the metric space of   inhabited and saturated compact functions of $X$,   establishing a Hofmann-Mislove theorem for approach spaces.

\vskip 3pt

\noindent\textbf{Keywords} Approach space, compact function, sober approach space, metric space,  $[0,\infty]$-ideal, open $[0,\infty]$-filter

\noindent \textbf{MSC(2020)} 18B35, 18F60, 54A05,  54B30
\end{abstract}

\section{Introduction}

While metric spaces (not necessarily symmetric and finitary) are  ordered sets (or, categories) enriched over Lawvere's quantale $([0,\infty],\geq,+,0)$ \cite{Lawvere1973}, approach spaces introduced by Lowen \cite{RL89} are topological spaces enriched over $([0,\infty],\geq,+,0)$. Approach spaces are  among the prime examples in  \emph{Monoidal Topology} (see e.g. \cite{CHT,Hof2007,Hof2011,Monoidal top}). For an exposition of these spaces, we refer to the monograph   \cite{Lowen15} by their inventor.

This paper concerns the connections between metric spaces and approach spaces, such connections are an important ingredient of Quantitative Domain Theory which, roughly speaking, replaces partially ordered sets  in Domain Theory \cite{Scott72,Gierz2003} by metric spaces  and more generally, quasi-uniform spaces, see e.g. \cite{BvBR1998,Goubault,Smyth88,Wagner97}. It has become clear that a coherent theory,  parallel to that of domains, in the $[0,\infty]$-enriched context is needed; see \cite{AW2011,GH,HW2012,LiZ18b,Windels}  for some efforts in this regard.  Of course, in order that parallels emerge, necessary modifications have to be made in the $[0,\infty]$-enriched context;  these modifications often help us understand the subject better.

An important result in domain theory, known as the Hofmann-Mislove theorem \cite{HM1981,Gierz2003}, says that the ordered set of   (proper) open filters of the open-set lattice of a sober topological space $X$ is isomorphic to  the ordered set of  (non-empty) compact saturated sets of  $X$ (ordered by reverse inclusion), hence establishes a connection between objects of an order-theoretic nature to objects of a topological nature. The aim of this paper is to look for an analogous result in the $[0,\infty]$-enriched context.  In order to achieve this, we have to determine, in the $[0,\infty]$-enriched context, an analog of compact subset and an analog of open filter of open-set lattice.

In \cite{AW2011} a notion of open $[0,\infty]$-filter of metric spaces is introduced. There is a natural metric $\rho_X$ on the set $\mathfrak{U}X$ of upper regular functions of an approach space, so, we take open $[0,\infty]$-filters of the metric space $(\mathfrak{U}X, \rho_X)$ for the $[0,\infty]$-enriched version of open filters of open-set lattices. It remains to determine an analog of compact subset in the enriched context. To this end,  the notion of compact functions of approach spaces is introduced (Definition \ref{cf}) and some basic properties of such functions are investigated. The results show that such functions behave in a way parallel to that of compact subsets of topological spaces.
\begin{center} \begin{tabular}{  l l }
\toprule ordered set & metric space\\
\midrule topological space & approach space \\
\midrule
   compact subset   & compact function \\ \midrule  lattice of open sets & metric space of upper regular functions \\ \midrule   open filter of a partially ordered set ~ & open $[0,\infty]$-filter of a metric space
  \\ \midrule  sober topological space & sober approach space   \\
\bottomrule \end{tabular}\end{center}

With the notions of compact functions and open $[0,\infty]$-filters at hand, we establish an enriched version of the Hofmann-Mislove theorem in the last section, which says that for a sober approach space $(X,\delta)$, the metric space of proper open $[0,\infty]$-filters of the metric space $(\mathfrak{U}X,\rho_X)$ of upper regular functions of $(X,\delta)$ is isomorphic to the opposite of the metric space of inhabited  compact saturated functions of $(X,\delta)$.

\section{$[0,\infty]$-ideals  and $[0,\infty]$-filters  of metric spaces}

In this section we recall some basic notions about metric spaces, viewed as categories enriched over Lawvere's quantale $([0,\infty],\geq,+,0)$  \cite{Lawvere1973,Borceux1994}, the aim is to fix terminologies and notations. Only  Proposition \ref{real-valued ideal} and Proposition \ref{real-valued filter} are new.

A metric space is a pair $(X,d)$ consisting of a set $X$ and a map $d\colon X\times X\lra[0,\infty]$, called a metric on $X$, such that $d(x,x)=0$ and $d(x,y)+d(y,z)\geq d(x,z)$ for all $x,y,z\in X$. For each metric $d$ on $X$, $d^{\rm op}(x,y)\coloneqq d(y,x)$ is also a metric on $X$, called the opposite of $d$. A metric $d$ is \begin{itemize}\setlength{\itemsep}{0pt} \item symmetric, if $d=d^{\rm op}$; that is, $d(x,y)=d(y,x)$ for all $x,y\in X$;  \item separated, if $x=y$ whenever $d(x,y)=d(y,x)=0$;  \item finitary, if $d(x,y)<\infty$ for all $x,y\in X$. \end{itemize}
A ``classical metric''   is just a symmetric, separated and finitary one.

A map $f\colon (X,d_X)\lra (Y,d_Y)$ between metric spaces is \emph{non-expansive} if, for all $x,y\in X$,
$$d_X(x,y)\geq d_Y(f(x),f(y)).$$
A map $f\colon (X,d_X)\lra (Y,d_Y)$ between metric spaces is \emph{isometric} if, for all $x,y\in X$,
$$d_X(x,y)= d_Y(f(x),f(y)).$$ Non-expansive maps are just $[0,\infty]$-functors when metric spaces are viewed as $[0,\infty]$-categories.
Metric spaces and non-expansive maps form a category $$\sf Met.$$

For all $a,b\in [0,\infty]$, let
$$d_L(a,b)=b\ominus a\coloneqq\max\{0,b-a\}.$$
Then $d_L$ is a metric on $[0,\infty]$. The opposite of $d_L$ is denoted by $d_R$; that is,
$$d_R(a,b)=a\ominus b.$$
Both $\dl$ an $\dr$ are separated, non-symmetric, and non-finitary metric spaces. We note here that the operations $\ominus, +\colon[0,\infty]\times[0,\infty]\lra[0,\infty]$ are interlocked by the adjoint property:
%$-\ominus b\colon[0,\infty]\lra[0,\infty]$ is right adjoint to $b+-\colon [0,\infty]\lra[0,\infty]$ for each $b\in[0,\infty]$; that is,
$$a\ominus b\leq c\iff a\leq b+c$$
for all $a,b,c\in [0,\infty]$.

The category $\sf Met$ is complete and cocomplete.
In particular, for every set $X$, the product metric on the $X$-power of $\dl$ is given by
$$\rho_X\colon [0,\infty]^X\times [0,\infty]^X\lra [0,\infty], \quad \rho_X(\lambda,\mu)=\sup_{x\in X}\mu(x)\ominus \lambda(x).$$

The \emph{underlying order} $\sqsubseteq_d$ of a metric space $(X,d)$ refers to
the order relation on $X$ defined by $$ x\sqsubseteq_d y \quad \text{if $d(x,y)=0.$}$$ It is clear that a metric space is separated if and only if its underlying order is a partial order and that any non-expansive map preserves the underlying order.

The underlying order $\sqsubseteq$ of  $([0,\infty]^X,\rho_X)$ is given by \[\lambda\sqsubseteq \mu\iff \forall x\in X,\lambda(x)\geq \mu(x).\] Particularly, the underlying order of $([0,\infty],d_L)$ is opposite to the usual less-than-or-equal-to relation.

\begin{con}Some conventions and notations. \begin{enumerate}[label=\rm(\alph*)] \setlength{\itemsep}{0pt}
\item When talking about a subset $A$ of $[0,\infty]^X$, we always assume that $A$ is endowed with the metric inherited from the space $([0,\infty]^X,\rho_X)$, which will also be denoted by  $\rho_X$.
\item  Though the underlying order of $([0,\infty]^X,\rho_X)$ is opposite to the \emph{usual} pointwise order, for each subset $\{\lambda_i\}_i$ of $[0,\infty]^X$, we still write   $\inf_i \lambda_i$ and $\sup_i \lambda_i$, respectively, for its greatest lower bound and least upper bound  with respect to the usual pointwise order.
\item For each $r\in[0,\infty]$ and $A\subseteq X$, we write $r_A$ for the map $X\lra[0,\infty]$ given by \[r_A(x)= \begin{cases} r, & x\in A,\\ \infty, & x\notin A. \end{cases} \]
    \end{enumerate}   \end{con}

A distributor $\phi\colon (X,d_X)\oto (Y,d_Y)$ is a map $\phi\colon X\times Y\to[0,\infty]$ such that $$d_Y(y,y')+\phi(x,y)+d_X(x',x)\geq \phi(x',y')$$ for all $x,x'\in X$ and $y,y'\in Y$.

For distributors $\phi\colon (X,d_X)\oto(Y,d_Y)$ and $\psi\colon(Y,d_Y)\oto (Z,d_Z)$, the composite  $\psi\circ\phi$ refers to the distributor $(X,d_X)\oto(Z,d_Z)$ given by $$\psi\circ\phi(x,z)=\inf_{y\in Y}(\psi(y,z)+\phi(x,y)).$$

For each non-expansive map $f\colon (X,d_X)\to(Y,d_Y)$, by the graph of $f$ we mean the distributor  $$f_*\colon(X,d_X)\oto(Y,d_Y), \quad (x,y)\mapsto d_Y(f(x),y);$$  by the cograph of $f$ we mean the distributor
$$f^*\colon(Y,d_Y)\oto(X,d_X), \quad (y,x)\mapsto d_Y(y,f(x)).$$  It is well-known that the graph $f_*$ is left adjoint to the cograph $f^*$ in the sense that $$f^*\circ f_*(x,x')\leq d_X(x,x')\quad \text{and}\quad f_*\circ f^*(y,y')\geq d_Y(y,y')$$ for all $x,x'\in X$ and $y,y'\in Y$.

Let $(X,d)$ be a metric space. A $\emph{weight}$ of $(X,d)$ is a distributor $\phi\colon(X,d)\oto\star$, where $\star$ denotes the singleton metric space; in other words, a weight of $(X,d)$ is a map $\phi\colon X\lra[0,\infty]$ such that for all $x,y\in X$,
$$\phi(y)+d(x,y)\geq\phi(x).$$
A \emph{coweight} of $(X,d)$ is a distributor $\psi\colon\star\oto(X,d)$; in other words, a coweight of $(X,d)$ is a map $\lambda\colon X\lra[0,\infty]$ such that for all $x,y\in X$,
$$\lambda(x)+d(x,y)\geq\lambda(y).$$

It is clear that a weight of $X$ is just a non-expansive map $\phi\colon (X,d^{\rm op})\lra\dl$;  a coweight of $X$ is a non-expansive map $\lambda\colon (X,d)\lra\dl$.

Write $$\mathcal{P}X$$ for the set of all weights of $(X,d)$. For each element $x$ of a metric space $(X,d)$, the map
$$d(-,x)\colon X\lra[0,\infty] $$
is a weight of $(X,d)$.   Weights  of this form are said to be \emph{representable}.

\begin{lem} {\rm(Yoneda lemma)} \label{yoneda lemma}
Let $(X,d)$ be a metric space. Then for each point $x$ and each weight  $\phi$  of $(X,d)$,
$$\rho_X(d(-,x),\phi)=\phi(x).$$
\end{lem}
The map
$${\sy}\colon (X,d)\lra (\mathcal{P}X,\rho_X), \quad \sy(x)=d(-,x)$$
 is isometric by the Yoneda lemma, and is called the \emph{Yoneda embedding}.

Assume that     $f\colon (X,d_X)\to (Y,d_Y)$ and $g\colon (Y,d_Y)\to (X,d_X)$ are non-expansive maps. We say that $f$ is left adjoint to $g$, or $g$ is right adjoint to $f$, and write $f\dashv g$, if for all $x\in X$ and $y\in Y$,
$$d_Y(f(x),y)=d_X(x,g(y)).$$

Each non-expansive map $f\colon (X,d_X)\to (Y,d_Y)$ gives rise to an adjunction between the metric spaces $(\mathcal{P}X,\rho_X)$ and $(\mathcal{P}Y,\rho_Y)$. Precisely, the map
$$f^\rightarrow\colon (\mathcal{P}X,\rho_X)\lra(\mathcal{P}Y,\rho_Y), \quad f^{\rightarrow}(\phi)=\phi\circ f^*$$
is left adjoint to
$$f^\leftarrow\colon (\mathcal{P}Y,\rho_Y)\lra(\mathcal{P}X,\rho_X), \quad f^\leftarrow(\psi)=\psi\circ f_*.$$

Let $(X,d)$ be a metric space. We say that an element $a$ of $X$ is   a \emph{colimit} of a weight $\phi$ and write $a=\colim\phi$ if
$$d(a,y)=\rho_X(\phi,d(-,y))$$
for all $y\in X$. Any colimits $a$ and $b$  of the same weight  are  isomorphic in the sense that $d(a,b)=d(b,a)=0$. Dually, we say that  $a$   is a \emph{limit} of a coweight $\psi$ and write $a=\lim\psi$  if
$$d(y,a)=\rho_X(\psi,d(y,-))$$
for all $y\in X$.

A metric space $(X,d)$ is \emph{cocomplete} if every weight of $(X,d)$ has a colimit. One readily verifies that $(X,d)$ is cocomplete if and only if the Yoneda embedding $\sy\colon (X,d)\lra(\mathcal{P}X,\rho_X)$ has a left adjoint.
It is   known (see e.g. \cite{Stubbe2005}) that $(X,d)$ is cocomplete if and only if $(X,d)$ is \emph{complete} in the sense that every coweight has a limit.

Let $(X,d)$ be a metric space. For each $x\in X$ and  $r\in [0,\infty]$, the \emph{tensor of $r$ with $x$}, denoted by $r\otimes x$, is a point of $X$ such that for all $y\in X$, \[d(r\otimes x,y)= d(x,y)\ominus r.\]
The \emph{cotensor of $r$ with $x$}, is an element $r\multimap x$ of $X$ such that for all $y\in X$,
$$d(y,r\multimap x)=d(y,x)\ominus r.$$

A metric space $(X,d)$ is   \emph{tensored} if the tensor $r\otimes x$ exists for all $x\in X$ and $r\in [0,\infty]$.

\begin{prop}\label{non-expansive via tensor}%{\rm(\cite[Proposition 2.10]{GH})}
If $(X,d_X)$ and $(Y,d_Y)$  are both tensored metric spaces, then a map $f\colon (X,d_X)\lra (Y,d_Y)$ is non-expansive  if  and only if \begin{enumerate}[label=\rm(\roman*)] \setlength{\itemsep}{0pt}
  \item $f\colon (X,\sqsubseteq_{d_X})\lra (Y,\sqsubseteq_{d_Y})$ preserves order; and \item  for all $x\in X$ and $r\in [0,\infty]$, $r\otimes f(x)\sqsubseteq_{d_Y} f(r\otimes x)$.\end{enumerate}\end{prop}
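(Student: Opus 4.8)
The plan is to reduce the single numerical inequality defining non-expansiveness to the two order-theoretic conditions (i) and (ii) by repeatedly invoking the defining identity $d(r\otimes x,y)=d(x,y)\ominus r$ of the tensor, together with its two immediate consequences: substituting $y:=r\otimes x$ gives $d(x,r\otimes x)\leq r$, and for an arbitrary $y$ one has $r\otimes x\sqsubseteq_d y$ precisely when $d(x,y)\leq r$. Conceptually, a tensor $r\otimes x$ is the universal ``$r$-translate'' of $x$, and it is exactly this that allows the metric inequality $d_X(x,y)\geq d_Y(f(x),f(y))$ to be repackaged as an order-preservation statement plus a comparison of translates.

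\emph{Necessity.} Suppose $f$ is non-expansive. Condition (i) is just the already-noted fact that a non-expansive map preserves underlying orders. For (ii), fix $x\in X$ and $r\in[0,\infty]$. Applying the tensor identity in $Y$ to the pair $(r\otimes f(x),f(r\otimes x))$ rewrites $d_Y(r\otimes f(x),f(r\otimes x))$ as $d_Y(f(x),f(r\otimes x))\ominus r$. Non-expansiveness of $f$ bounds $d_Y(f(x),f(r\otimes x))$ above by $d_X(x,r\otimes x)$, which is at most $r$ by the consequence noted above; hence the truncated difference is $0$, that is, $r\otimes f(x)\sqsubseteq_{d_Y}f(r\otimes x)$.

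\emph{Sufficiency.} Assume (i) and (ii), fix $x,y\in X$, and set $r:=d_X(x,y)$. Then $d_X(r\otimes x,y)=d_X(x,y)\ominus r=0$, so $r\otimes x\sqsubseteq_{d_X}y$; condition (i) carries this to $f(r\otimes x)\sqsubseteq_{d_Y}f(y)$, while condition (ii) gives $r\otimes f(x)\sqsubseteq_{d_Y}f(r\otimes x)$. The triangle inequality in $Y$ therefore yields $r\otimes f(x)\sqsubseteq_{d_Y}f(y)$, i.e.\ $d_Y(r\otimes f(x),f(y))=0$. Unwinding the tensor identity in $Y$ one last time turns this into $d_Y(f(x),f(y))\ominus r=0$, that is, $d_Y(f(x),f(y))\leq r=d_X(x,y)$, which is non-expansiveness.

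I do not expect a genuine obstacle here; the only substantive point is the conceptual reformulation just described, and once that is in place both directions are short chains of the tensor identity, the triangle inequality, and (i)/(ii). The bookkeeping with the truncated subtraction $\ominus$ and with the boundary values $r=0$ and $r=\infty$ is routine and causes no trouble, since all the displayed identities remain literally valid in those cases.
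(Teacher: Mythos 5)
Your proof is correct, and the paper itself gives no argument here (its proof reads simply ``Routine''), so your write-up just supplies the intended routine verification: the tensor identity $d(r\otimes x,y)=d(x,y)\ominus r$, its consequences $d(x,r\otimes x)\leq r$ and ``$r\otimes x\sqsubseteq_d y$ iff $d(x,y)\leq r$'', and the triangle inequality, applied in both directions exactly as one would expect. Nothing to correct; the boundary cases $r=0,\infty$ are handled as you say.
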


\begin{proof}See e.g. \cite[Proposition 2.10]{GH}.\end{proof}

Every cocomplete metric space is tensored: $r\otimes x$ is the colimit of the weight $r+ d(-,x)$.
The following characterization of cocomplete (hence complete) metric spaces is   a special case of  \cite[Theorem 2.7]{Stubbe2006}, the conditions (ii) and (iii) amount to that $(X,d)$, as an enriched category, is conically cocomplete.
\begin{prop}\label{cocomplete via tensor}  A metric space $(X,d)$ is cocomplete if and only if \begin{enumerate}[label=\rm(\roman*)] \setlength{\itemsep}{0pt}

\item $(X,d)$ is tensored;   \item each subset $A$ of $X$ has a join   in $(X,\sqsubseteq_d)$; and \item   for each $x\in X$ and each   $A\subseteq X$,       $d(\bv A,x)= \sup\limits_{a\in A}d(a,x)$, where $\bv A$ refers to the join of $A$ in $(X,\sqsubseteq_d)$.    \end{enumerate} \end{prop}

\begin{example} (See e.g. \cite{GH}) For each set $X$, the metric space $([0,\infty]^X,\rho_X)$ is cocomplete, hence complete. In particular, for all $\lambda\in[0,\infty]^X$ and $r\in[0,\infty]$, $r\otimes\lambda= r+\lambda$ and $r\multimap \lambda=\lambda\ominus r$.
\end{example}

For each set $X$, define a function $$\cpt\colon [0,\infty]^X\times[0,\infty]^X\lra[0,\infty]$$  by $$\phi\cpt\lambda=\inf_{x\in X}(\phi(x)+\lambda(x))$$ for all $\phi,\lambda\in[0,\infty]^X$. This function   plays an important role in this paper.

\begin{rem}For each set $X$ and  $\phi,\lambda\in [0,\infty]^X$, if we view $X$ as a discrete metric space, view $\phi$ as a weight and $\lambda$ as a coweight of $X$, then $\phi \pitchfork \lambda$ is essentially the composite distributor $\phi\circ\lambda\colon \star\oto\star$.
\end{rem}
\begin{lem}\label{sub vs tensor}
Let $X$ be a set. Then for all $\phi,\lambda\in [0,\infty]^X$ and $b\in[0,\infty]$,
\[\rho_X(\phi, b\ominus\lambda)=b\ominus(\lambda\cpt\phi).\]
\end{lem}

\begin{proof}
Routine calculations.\end{proof}

%We calculate: \begin{align*} \rho_X(\phi,b\ominus\lambda)&=\sup_{x\in X}((b\ominus\lambda(x))\ominus\phi(x))\\ & = \sup_{x\in X}(b\ominus(\lambda(x)+\phi(x)))\\ & = b\ominus\inf_{x\in X}\lambda(x)+\phi(x))\\ & = b\ominus(\lambda\cpt\phi). \end{align*}

Let  $\lambda\in[0,\infty]^X$. We say that \begin{enumerate}[label=\rm(\roman*)] \setlength{\itemsep}{0pt}
  \item  $\lambda$ is  inhabited   if $\inf\limits_{x\in X}\lambda(x)=0$;
  \item  $\lambda$ is  vacuous   if $\lambda=\infty_X$.\end{enumerate}
It is clear that $\lambda$ is inhabited if and only if $\lambda\cpt 0_X=0$,  $\lambda$ is vacuous if and only if $\lambda\cpt \mu=\infty$ for all $\mu\in[0,\infty]^X$.

Let $(X,d)$ be a metric space. A net $\{x_i\}_i$ in $(X,d)$ is  \emph{forward Cauchy} \cite{Smyth88,BvBR1998} if
$$\inf_i\sup_{k\geq j\geq i}d(x_j,x_k)=0.$$

\begin{defn}(\cite{FS2002,AW2011}) A $[0,\infty]$-ideal of a metric space $(X,d)$ is a weight $\phi$ such that $$\phi=\inf_i\sup_{j\geq i}d(-,x_j)$$
for some forward Cauchy net $\{x_i\}_i$.\end{defn}

The following theorem characterizes $[0,\infty]$-ideals of a metric space $(X,d)$ via their relation to the weights and coweights of $(X,d)$.

\begin{thm}\label{irideal}
Let $\phi$ be a weight of a metric space $(X,d)$. The following are equivalent:  \begin{enumerate}[label=\rm(\arabic*)] \setlength{\itemsep}{0pt}
  \item $\phi$ is a $[0,\infty]$-ideal. \item $\phi$ is inhabited and is irreducible in the sense that for all weights $\phi_1,\phi_2$ of $(X,d)$,  $$\inf\{\phi_1,\phi_2\}\leq\phi\implies \text{ either $\phi_1\leq\phi$ or $\phi_2\leq\phi$}.$$ \item $\phi$ is inhabited and is flat in the sense that for all coweights $\lambda_1,\lambda_2$ of $(X,d)$,  $$\phi\circ\sup\{\lambda_1,\lambda_2\}=\max\{\phi\circ\lambda_1, \phi\circ\lambda_2\}.$$\end{enumerate}
\end{thm}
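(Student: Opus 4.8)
The plan is to prove the cycle $(1)\Rightarrow(3)\Rightarrow(2)\Rightarrow(1)$. I will freely use: $\phi\pitchfork\lambda=\lambda\pitchfork\phi$; the identity $\phi\pitchfork d(x,-)=\inf_z(\phi(z)+d(x,z))=\phi(x)$ (the ``$\le$'' is the value at $z=x$, the ``$\ge$'' is the defining inequality of a weight); the equivalence $\rho_X(\phi,\psi)=0\iff\psi\le\phi$ (pointwise); the closure of weights under arbitrary pointwise infima and suprema and of coweights under finite pointwise suprema; that a truncation $\inf\{r_X,\chi\}$ of a weight $\chi$ is again a bounded weight; and that for every $C\in[0,\infty)$ the function $z\mapsto C\ominus d(x,z)$ is a weight (a short triangle-inequality check).

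For $(1)\Rightarrow(3)$, write $\phi=\inf_i\sup_{j\ge i}d(-,x_j)$ with $\{x_i\}$ forward Cauchy. From $\phi(x_i)\le\sup_{j\ge i}d(x_i,x_j)$ and the forward Cauchy condition one gets $\inf_{m\ge i}\phi(x_m)=0$ for every $i$; in particular $\phi$ is inhabited. The engine is the formula $\phi\pitchfork\lambda=\inf_i\sup_{j\ge i}\lambda(x_j)$ valid for every coweight $\lambda$: ``$\ge$'' follows by pulling $\lambda(x)$ inside the infimum over $i$ and using $\lambda(x)+\sup_{j\ge i}d(x,x_j)\ge\sup_{j\ge i}\lambda(x_j)$; ``$\le$'' follows from $\phi\pitchfork\lambda\le\inf_{m\ge i}(\phi(x_m)+\lambda(x_m))\le\inf_{m\ge i}\phi(x_m)+\sup_{m\ge i}\lambda(x_m)=\sup_{m\ge i}\lambda(x_m)$ and then taking $\inf_i$. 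Flatness is then immediate from $\sup_{j\ge i}\max\{a_j,b_j\}=\max\{\sup_{j\ge i}a_j,\sup_{j\ge i}b_j\}$ together with the net identity $\inf_i\max\{A_i,B_i\}=\max\{\inf_iA_i,\inf_iB_i\}$ for antitone families (i.e. $\limsup\max=\max\limsup$).

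For $(3)\Rightarrow(2)$, inhabitedness transfers verbatim, so I only verify irreducibility, and I first reduce to bounded weights. Given weights $\phi_1,\phi_2$ with $\inf\{\phi_1,\phi_2\}\le\phi$, the truncations $\inf\{r_X,\phi_i\}$ ($r<\infty$) are bounded weights with $\inf\{\inf\{r_X,\phi_1\},\inf\{r_X,\phi_2\}\}\le\phi$; since ``$\not\le\phi$'' is upward closed and these truncations increase with $r$, either $\inf\{r_X,\phi_1\}\le\phi$ for all $r$ (whence $\phi_1=\sup_r\inf\{r_X,\phi_1\}\le\phi$), or $\inf\{r_X,\phi_2\}\le\phi$ for all $r$ (whence $\phi_2\le\phi$). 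For bounded weights $\phi_1,\phi_2\le b_X$ with $b<\infty$, write $\phi_i=b\ominus\lambda_i$ with $\lambda_i:=b\ominus\phi_i$ a coweight $\le b_X$; then $\inf\{\phi_1,\phi_2\}=b\ominus\sup\{\lambda_1,\lambda_2\}$, and Lemma \ref{sub vs tensor}(ii) converts the statements $\phi_1\le\phi$, $\phi_2\le\phi$, $\inf\{\phi_1,\phi_2\}\le\phi$ into $\lambda_1\pitchfork\phi\ge b$, $\lambda_2\pitchfork\phi\ge b$, $\sup\{\lambda_1,\lambda_2\}\pitchfork\phi\ge b$, respectively. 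Flatness, $\sup\{\lambda_1,\lambda_2\}\pitchfork\phi=\max\{\lambda_1\pitchfork\phi,\lambda_2\pitchfork\phi\}$, then says exactly that the last holds iff one of the first two does.

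The main obstacle is $(2)\Rightarrow(1)$, the construction of a forward Cauchy net. Take $I=\{(x,\epsilon): x\in X,\ \epsilon\in(0,\infty),\ \phi(x)<\epsilon\}$, preordered (clearly a preorder) by $(x,\epsilon)\preceq(y,\delta)\iff d(x,y)+\delta\le\epsilon$, and put $n(x,\epsilon)=x$. Inhabitedness makes $I$ nonempty and forces $\inf\{\epsilon:(x,\epsilon)\in I\}=0$, and $n$ is forward Cauchy because $(a,\alpha)\preceq(b,\beta)\preceq(c,\gamma)$ forces $d(n(b),n(c))=d(b,c)\le\beta\le\alpha$, so $\sup_{(c,\gamma)\succeq(b,\beta)\succeq(a,\alpha)}d(n(b),n(c))\le\alpha$ and the infimum over $I$ is $0$. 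Directedness of $I$ is the first use of irreducibility: a common upper bound of $(x,\epsilon),(y,\delta)\in I$ exists iff there is a $z$ with $\phi(z)+d(x,z)<\epsilon$ and $\phi(z)+d(y,z)<\delta$, and if there were none then $\phi_1:=\epsilon\ominus d(x,-)$ and $\phi_2:=\delta\ominus d(y,-)$ would satisfy $\inf\{\phi_1,\phi_2\}\le\phi$ while $\phi_1\not\le\phi$ (since $\phi_1\le\phi\iff\epsilon\le\phi(x)$, which fails as $(x,\epsilon)\in I$) and $\phi_2\not\le\phi$, contradicting irreducibility. It remains to show $\phi=\inf_i\sup_{j\succeq i}d(-,n(j))$; here $\{n(j):j\succeq(x_0,\epsilon_0)\}=B_{x_0,\epsilon_0}:=\{y:\phi(y)+d(x_0,y)<\epsilon_0\}$, so the inequality ``$\le\phi$'' follows by choosing, for given $w$ and $\eta>0$, some $x_0$ with $d(w,x_0)+\phi(x_0)<\phi(w)+\eta/2$ and setting $\epsilon_0=\phi(x_0)+\eta/2$, whence every $y\in B_{x_0,\epsilon_0}$ has $d(w,y)\le d(w,x_0)+d(x_0,y)<\phi(w)+\eta$. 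For ``$\ge\phi$'' I prove a descent lemma, again from irreducibility: for every $x_0$ with $\phi(x_0)<\infty$ and every $\eta>0$ there is a $z$ with $\phi(z)+d(x_0,z)<\phi(x_0)+\eta$ and $\phi(z)<\eta$ — otherwise the weights $(\phi(x_0)+\eta)\ominus d(x_0,-)$ and the constant weight $\eta_X$ (the latter failing $\le\phi$ precisely because $\phi$ is inhabited) would contradict irreducibility. Applying this with $\eta<\epsilon_0-\phi(x_0)$ puts such a $z$ into $B_{x_0,\epsilon_0}$, and $d(w,z)\ge\phi(w)\ominus\phi(z)\ge\phi(w)\ominus\eta$ together with the arbitrariness of $\eta$ yields $\sup_{y\in B_{x_0,\epsilon_0}}d(w,y)\ge\phi(w)$ for every index $(x_0,\epsilon_0)\in I$. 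I expect the only genuinely fiddly points to be the bookkeeping of the auxiliary weights fed into irreducibility and the handling of $\infty$-valued edge cases throughout.
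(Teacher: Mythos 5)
Your proof is correct, but it is a genuinely different route from the paper's: the paper does not prove Theorem \ref{irideal} at all, it simply cites the proof of Theorem 7.15 of Vickers \cite{SV2005} for $(1)\Leftrightarrow(3)$ and Theorem 3.13 of \cite{LZZ2020} for $(1)\Leftrightarrow(2)$, whereas you give a self-contained cycle $(1)\Rightarrow(3)\Rightarrow(2)\Rightarrow(1)$ working directly with $[0,\infty]$. Your three ingredients all check out: the ``engine'' formula $\phi\pitchfork\lambda=\inf_i\sup_{j\geq i}\lambda(x_j)$ (together with $\inf_{m\geq i}\phi(x_m)=0$ and the limsup identity $\inf_i\max\{A_i,B_i\}=\max\{\inf_iA_i,\inf_iB_i\}$ for antitone nets over a directed index set) gives flatness; the truncation-plus-complementation trick, with Lemma \ref{sub vs tensor}(ii) translating $\psi\leq\phi$ into $(b\ominus\psi)\pitchfork\phi\geq b$, correctly reduces irreducibility to flatness against bounded coweights; and in $(2)\Rightarrow(1)$ the index set $I=\{(x,\epsilon):\phi(x)<\epsilon\}$ with $(x,\epsilon)\preceq(y,\delta)\iff d(x,y)+\delta\leq\epsilon$ is indeed a preorder whose directedness, and your ``descent lemma'', are exactly the two places where irreducibility is fed the auxiliary weights $\epsilon\ominus d(x,-)$ and $\eta_X$ (both of which are weights, as you note), and the two inequalities identifying $\phi$ with $\inf_i\sup_{j\succeq i}d(-,n(j))$ go through, including the $\infty$-valued edge cases. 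In effect you have reconstructed, in the special quantale $([0,\infty],\geq,+,0)$, the content of the two cited results: what the paper's citation buys is brevity and the general enriched setting (flat weights versus Cauchy/irreducible ones over an arbitrary quantale), while your argument buys a reader of this paper a complete elementary proof with no external dependencies beyond Lemma \ref{sub vs tensor}. Two micro-points you may want to make explicit if you write this up: that $\sup\{\lambda_1,\lambda_2\}$ of coweights is again a coweight (needed to apply flatness after complementing), and that the index set of a forward Cauchy net must be directed, which is precisely why the directedness argument for $I$ has to precede the Cauchy estimate.
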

\begin{proof}The equivalence $(1)\Leftrightarrow(3)$ is contained in the proof of \cite[Theorem 7.15]{SV2005};  the equivalence $(1)\Leftrightarrow(2)$ is a special case of \cite[Theorem 3.13]{LZZ2020}. \end{proof}

Since each ideal $D$ of $(X,\sqsubseteq_d)$ can be viewed as a forward Cauchy net of $(X,d)$,  $$d(-,D)\coloneqq\inf_{x\in D}d(-,x)$$ is a $[0,\infty]$-ideal of $(X,d)$. For a cocomplete metric space, every $[0,\infty]$-ideal is   of this form.

\begin{prop}\label{colimit as join for ideal} {\rm (\cite[Proposition 4.8]{LZ2020})} Let $(X,d)$ be a cocomplete metric space and let $\phi$ be a $[0,\infty]$-ideal of $(X,d)$. Then \begin{enumerate}[label=\rm(\roman*)] \setlength{\itemsep}{0pt}
  \item $\Delta(\phi)\coloneqq\{x\in X\mid\phi(x)=0\}$ is an ideal in $(X,\sqsubseteq_d)$ and $\phi=\inf\limits_{x\in \Delta(\phi)}d(-,x)$.
      \item  Colimits of $\phi$ in $(X,d)$ are precisely  joins of $\Delta(\phi)$ in $(X,\sqsubseteq_d)$.\end{enumerate}
\end{prop}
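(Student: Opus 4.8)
The plan is to prove (i) first and then deduce (ii). Write $D\coloneqq\Delta(\phi)$. Two of the assertions in (i) are immediate from the weight inequality $\phi(y)\le\phi(x)+d(y,x)$: for $x\in D$ this reads $\phi(y)\le d(y,x)$ for all $y$, so $\phi\le\inf_{x\in D}d(-,x)$ pointwise, and taking also $y\sqsubseteq_d x$ gives $\phi(y)=0$, so $D$ is a down-set of $(X,\sqsubseteq_d)$. Moreover $D$ is inhabited: the bottom element $\bot$ of $(X,\sqsubseteq_d)$ exists because $X$ is cocomplete, and $\phi(\bot)\le\phi(x)+d(\bot,x)=\phi(x)$ forces $\phi(\bot)\le\inf_X\phi=0$ since $\phi$, being a $[0,\infty]$-ideal, is inhabited (Theorem~\ref{irideal}). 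So it remains to prove that $D$ is up-directed and that $\phi\ge\inf_{x\in D}d(-,x)$.

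For up-directedness I would use that $\phi$ is flat (Theorem~\ref{irideal}). Given $x_1,x_2\in D$, put $\lambda\coloneqq d(x_1,-)\vee d(x_2,-)$, a coweight. Since $\phi\pitchfork d(x_k,-)\le\phi(x_k)+d(x_k,x_k)=0$, flatness gives $\phi\pitchfork\lambda=\max\{\phi\pitchfork d(x_1,-),\phi\pitchfork d(x_2,-)\}=0$. The coweight $\lambda$ has a limit $b$ (as $X$ is cocomplete, hence complete); from $\lambda\ge d(x_k,-)$ one reads off $d(x_k,b)=0$, so $b$ bounds $x_1$ and $x_2$ above, and from $\phi\pitchfork\lambda=0$ a short computation with the weight inequality (bounding $d(z,v)$ from below at a point $v$ where $\phi(v)+\lambda(v)$ is small) gives $d(z,b)\ge\phi(z)$ for all $z$, so $\phi(b)=\sup_z\bigl(\phi(z)\ominus d(z,b)\bigr)=0$ by the Yoneda lemma, i.e.\ $b\in D$.

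The inequality $\phi\ge\inf_{x\in D}d(-,x)$ is the crux, and where I expect the real work to lie. Fix a forward Cauchy net $\{x_i\}_i$ with $\phi=\inf_i\sup_{j\ge i}d(-,x_j)$, and let $y\in X$ with $\phi(y)<\infty$ (if $\phi(y)=\infty$ there is nothing to show). For any $r>\phi(y)$ choose $i_0$ with $\sup_{j\ge i_0}d(y,x_j)<r$. The key point is that then $d(r\otimes y,x_j)=d(y,x_j)\ominus r=0$ for all $j\ge i_0$, i.e.\ the tensor $r\otimes y$ sits $\sqsubseteq_d$-below the whole tail $\{x_j:j\ge i_0\}$; so the triangle inequality yields $d(w,r\otimes y)\ge d(w,x_j)$ for all $j\ge i_0$ and all $w$, hence $d(w,r\otimes y)\ge\inf_i\sup_{j\ge i}d(w,x_j)=\phi(w)$. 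By the Yoneda lemma this forces $\phi(r\otimes y)=\sup_w\bigl(\phi(w)\ominus d(w,r\otimes y)\bigr)=0$, so $r\otimes y\in D$; and $d(y,r\otimes y)\le r$ since $r\otimes y=\colim(r+d(-,y))$ and a colimit $a$ of a weight $\psi$ always satisfies $d(w,a)\le\psi(w)$. Thus $\inf_{x\in D}d(y,x)\le r$ for every $r>\phi(y)$, whence $\inf_{x\in D}d(y,x)\le\phi(y)$. Together with the first paragraph this gives $\phi=\inf_{x\in D}d(-,x)$ and completes (i). I expect this tensor-and-tail step to be the only genuinely non-routine point; everything around it is bookkeeping with the Yoneda lemma and the (co)weight inequalities.

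For (ii): with $\phi=d(-,D)$ and $D$ a directed down-set, I would check that $a\coloneqq\bigvee D$ (which exists as $X$ is cocomplete) is a colimit of $\phi$. Indeed $d(a,y)=\sup_{x\in D}d(x,y)$ by Proposition~\ref{cocomplete via tensor}(iii), while $\rho_X(\phi,d(-,y))=\sup_z\bigl(d(z,y)\ominus\inf_{x\in D}d(z,x)\bigr)$ also equals $\sup_{x\in D}d(x,y)$: the inequality $\ge$ comes from the terms with $z\in D$ (where $\inf_{x\in D}d(z,x)=0$), and $\le$ from the triangle inequality, approximating $\inf_{x\in D}d(z,x)$ by some $d(z,x)$ with $x\in D$. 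Hence $d(a,y)=\rho_X(\phi,d(-,y))$ for all $y$, so $a$ is a colimit of $\phi$; since colimits of a weight and joins of a subset are each unique up to isomorphism in $(X,d)$ and a point isomorphic to a colimit of $\phi$ is again one, the colimits of $\phi$ are precisely the joins of $D=\Delta(\phi)$.
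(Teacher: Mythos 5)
Your proof is correct, but note that the paper does not prove this proposition at all: it is imported verbatim from \cite[Proposition 4.8]{LZ2020}, so there is no in-paper argument to compare against. Your self-contained proof holds up at every step I checked: the weight inequality gives $\phi\le\inf_{x\in\Delta(\phi)}d(-,x)$ and the down-set property; the bottom element $\bigvee\varnothing$ together with inhabitedness of $\phi$ (Theorem \ref{irideal}) gives nonemptiness; directedness via flatness applied to $\sup\{d(x_1,-),d(x_2,-)\}$ and its limit $b$ works, since $d(x_k,b)=0$ is immediate and $d(z,b)\ge\phi(z)$ follows from choosing $v$ with $\phi(v)+\lambda(v)$ small as you indicate, whence $\phi(b)=0$ by Yoneda; and the crucial reverse inequality via the tensor $r\otimes y$ sitting below a tail of the Cauchy net is exactly right (with $d(y,r\otimes y)\le r$ correctly justified). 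It is worth observing that this tensor step is the same device the paper itself uses in the sufficiency half of the proof of Proposition \ref{real-valued ideal}, where $\phi(x)\otimes x$ is shown to lie in $\Delta(\phi)$; indeed, once Proposition \ref{real-valued ideal}(i) is available, your key step is the special case $\phi(r\otimes y)=\phi(y)\ominus r$, though your direct derivation from the net representation avoids any ordering issue between the two propositions. Part (ii), computing $d(\bigvee\Delta(\phi),y)=\sup_{x\in\Delta(\phi)}d(x,y)=\rho_X(\phi,d(-,y))$ and then passing to isomorphism classes, is also correct and is the natural argument given Proposition \ref{cocomplete via tensor}(iii).
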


The following characterization of $[0,\infty]$-ideals of a cocomplete metric space and its dual, Proposition \ref{real-valued filter}, are useful in this paper.
\begin{prop} \label{real-valued ideal}Let $(X,d)$ be a cocomplete metric space. Then a function $\phi\colon X\lra[0,\infty]$   is a  $[0,\infty]$-ideal if and only if it satisfies: \begin{enumerate}[label=\rm(\roman*)] \setlength{\itemsep}{0pt}
   \item   for all $x\in X$ and $r\in[0,\infty]$,  $\phi(r\otimes x)=\phi(x)\ominus r$;

\item  $\phi(x\vee y)= \max\{\phi(x),\phi(y)\}$ for all $x,y\in X$, where $x\vee y$ is a join of $x$ and $y$ in $(X,\sqsubseteq_d)$. \end{enumerate}
\end{prop}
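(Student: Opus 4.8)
The plan is to route everything through the zero-set $\Delta(\phi)=\{x\in X\mid\phi(x)=0\}$ and the formula $\phi=\inf_{a\in\Delta(\phi)}d(-,a)$. The forward implication is then read off from Proposition~\ref{colimit as join for ideal}, and for the converse I build the ideal $\Delta(\phi)$ directly out of conditions (i)--(ii) and check that it recovers $\phi$.

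For the forward implication, assume $\phi$ is a $[0,\infty]$-ideal. Proposition~\ref{colimit as join for ideal} gives that $\Delta(\phi)$ is an ideal of $(X,\sqsubseteq_d)$ and $\phi=\inf_{a\in\Delta(\phi)}d(-,a)$. Then (i) follows from $\phi(r\otimes x)=\inf_{a\in\Delta(\phi)}d(r\otimes x,a)=\inf_{a\in\Delta(\phi)}\big(d(x,a)\ominus r\big)=\big(\inf_{a\in\Delta(\phi)}d(x,a)\big)\ominus r=\phi(x)\ominus r$, using that $(-)\ominus r$ commutes with arbitrary infima (a routine $\varepsilon$-estimate). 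For (ii), Proposition~\ref{cocomplete via tensor}(iii) rewrites $\phi(x\vee y)=\inf_{a\in\Delta(\phi)}\max\{d(x,a),d(y,a)\}$; the inequality $\phi(x\vee y)\geq\max\{\phi(x),\phi(y)\}$ is immediate, and for the reverse one uses that $\Delta(\phi)$ is up-directed: a common upper bound $a$ of near-minimizers $a_x,a_y$ satisfies $d(x,a)\leq d(x,a_x)$ and $d(y,a)\leq d(y,a_y)$ (since $d(a_x,a)=d(a_y,a)=0$), so $\inf_a\max\{d(x,a),d(y,a)\}\leq\max\{\phi(x),\phi(y)\}$.

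For the converse, assume $\phi$ satisfies (i)--(ii). I first show $\Delta(\phi)$ is an ideal of $(X,\sqsubseteq_d)$. It is up-directed: for $x,y\in\Delta(\phi)$ the join $x\vee y$ exists by Proposition~\ref{cocomplete via tensor}(ii), lies in $\Delta(\phi)$ by (ii), and is an upper bound of $\{x,y\}$. It is a down-set: if $a\sqsubseteq_d b$ and $\phi(b)=0$, then $b$ is a join of $\{a,b\}$, so (ii) gives $0=\phi(b)=\max\{\phi(a),\phi(b)\}$, hence $\phi(a)=0$. It is nonempty: by (i), $\phi\big(\phi(x)\otimes x\big)=\phi(x)\ominus\phi(x)=0$ for any $x$. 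Hence, by the observation preceding Proposition~\ref{colimit as join for ideal}, $\psi:=\inf_{a\in\Delta(\phi)}d(-,a)$ is a $[0,\infty]$-ideal, and it remains to prove $\phi=\psi$. For $\phi\leq\psi$: given $a\in\Delta(\phi)$ and $y\in X$, put $r=d(y,a)$; then $d(r\otimes y,a)=d(y,a)\ominus r=0$, so $r\otimes y\sqsubseteq_d a$, whence $r\otimes y\in\Delta(\phi)$ by down-closedness, and (i) gives $0=\phi(r\otimes y)=\phi(y)\ominus r$, i.e. $\phi(y)\leq d(y,a)$; taking the infimum over $a\in\Delta(\phi)$ yields $\phi(y)\leq\psi(y)$. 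For $\psi\leq\phi$: since $\phi(y)\otimes y\in\Delta(\phi)$ we have $\psi(y)\leq d\big(y,\phi(y)\otimes y\big)$, and this last quantity is $\leq\phi(y)$ by the inequality $d(w,\colim\nu)\leq\nu(w)$ applied to the weight $\nu=\phi(y)+d(-,y)$ (whose colimit is $\phi(y)\otimes y$) at $w=y$. That inequality is nothing but the unit of the adjunction for the left adjoint of $\sy$: evaluating $d(\colim\nu,z)=\rho_X(\nu,d(-,z))$ at $z=\colim\nu$ gives $\rho_X(\nu,d(-,\colim\nu))=0$, i.e. $d(w,\colim\nu)\leq\nu(w)$ for all $w$.

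The crux, and the step I expect to need the most thought, is the converse: one has to hit on the idea of probing $\phi$ with the tensors $r\otimes y$ — condition (i) is exactly what keeps $\Delta(\phi)$ stable under $y\mapsto r\otimes y$ and what lets one evaluate $\phi$ there — and to recognize $d(w,\colim\nu)\leq\nu(w)$ as the mechanism behind $\psi\leq\phi$. A minor point to keep track of is that (ii) should be read as saying $\phi$ takes the value $\max\{\phi(x),\phi(y)\}$ at \emph{every} join of $x$ and $y$; this is automatic in the separated spaces of interest (e.g. $(\mathfrak{U}X,\rho_X)$) and is what licenses using $b$ itself as the join of a comparable pair $\{a,b\}$ in the down-closedness argument.
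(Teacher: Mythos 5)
Your proof is correct and follows essentially the same route as the paper: both directions hinge on the zero-set $\Delta(\phi)$, its ideal structure coming from (ii), and the tensor identity $\phi(\phi(x)\otimes x)=0$ coming from (i), with the paper's converse differing only in that it first verifies $\phi$ is a weight (via Proposition \ref{non-expansive via tensor}) and uses $\phi(x)\leq\phi(a)+d(x,a)$ where you instead probe with $r\otimes y$ and down-closedness of $\Delta(\phi)$. Your forward direction routes through Proposition \ref{colimit as join for ideal} rather than computing directly with a forward Cauchy net, but this is a cosmetic variation and both arguments are sound.
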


\begin{proof} If $\phi$ is a $[0,\infty]$-ideal,  there is   a forward Cauchy net $\{x_i\}_{i\in I}$ of $(X,d)$ such that $$\phi=\inf_i\sup_{j\geq i}d(-,x_j).$$ Now we check that it satisfies (i) and (ii).

(i) We calculate:
\begin{align*}
\phi(r\otimes x) & = \inf_i\sup_{j\geq i}d(r\otimes x,x_j)\\
& = \inf_i\sup_{j\geq i}(d(x,x_j)\ominus r)\\
& = (\inf_i\sup_{j\geq i}d(x,x_j))\ominus r\\
& = \phi(x)\ominus r.
\end{align*}

(ii)  We calculate:
\begin{align*}
\phi(x\vee y) & = \inf_i\sup_{j\geq i}d(x\vee y,x_j)\\
& = \inf_i\sup_{j\geq i} \max\{d(x,x_j),d(y,x_j)\} & \text{(Proposition \ref{cocomplete via tensor})}\\
& = \inf_i\max\{\sup_{j\geq i} d(x,x_j),\sup_{j\geq i}d(y,x_j)\}\\
& =\max\{\inf_i\sup_{j\geq i}d(x,x_j),\inf_i\sup_{j\geq i}d(y,x_j)\}&\text{(the index set is directed)}\\
& = \max\{\phi(x),\phi(y)\}.
\end{align*}

As for sufficiency, assume that $\phi\colon X\lra[0,\infty]$  satisfies (i) and (ii). First, by (i), (ii) and Proposition \ref{non-expansive via tensor}
one sees that $\phi\colon(X,d^{\rm op})\lra([0,\infty],d_L)$ is non-expansive, hence a  weight of $(X,d)$. Next, let
$$\Delta(\phi)=\{x\in X\mid\phi(x)=0\}.$$
By (ii) one sees that $\Delta(\phi)$ is an ideal of $(X,\sqsubseteq_d)$. Now we show that for each $x\in X$,
$$\phi(x)=\inf_{a\in \Delta(\phi)}d(x,a),$$
which implies that $\phi$ is a $[0,\infty]$-ideal  since $\Delta(\phi)$ can be viewed as a forward Cauchy net of $(X,d)$.
On one hand,  for each $a\in \Delta(\phi)$, since $\phi(x)\leq \phi(a)+d(x,a)=d(x,a)$, it follows that $$\phi(x)\leq\inf_{a\in \Delta(\phi)}d(x,a).$$ On the other hand, since
$\phi(\phi(x)\otimes x)=\phi(x)\ominus\phi(x)=0,$ then
$\phi(x)\otimes x\in \Delta(\phi)$. Because
$$0=d(\phi(x)\otimes x, \phi(x)\otimes x)= d(x,\phi(x)\otimes x)\ominus \phi(x),$$ therefore
$\phi(x)\geq d(x,\phi(x)\otimes x)$, hence $$\phi(x) \geq\inf_{a\in \Delta(\phi)}d(x,a),$$ which completes the proof.
\end{proof}

Following  \cite{AW2011}, a map $\lambda\colon X\lra[0,\infty]$ is called a \emph{$[0,\infty]$-filter} of $(X,d)$  if it is a $[0,\infty]$-ideal of $(X,d^{\rm op})$. In other words, a $[0,\infty]$-filter of $(X,d)$ is a coweight $\lambda$ such that
$$\lambda=\inf_i\sup_{j\geq i}d(x_j,-)$$
for some   net $\{x_i\}_{i\in I}$  that  is \emph{backward Cauchy} in the sense that
$$\inf_i\sup_{k\geq j\geq i}d(x_k,x_j)=0.$$

The following proposition is dual to Proposition \ref{real-valued ideal}.
\begin{prop} \label{real-valued filter} Let $(X,d)$ be a complete metric space. Then a function $\lambda\colon X\lra[0,\infty]$   is a  $[0,\infty]$-filter if and only if it satisfies: \begin{enumerate}[label=\rm(\roman*)] \setlength{\itemsep}{0pt}
   \item   for all $x\in X$ and $r\in[0,\infty]$,  $\lambda(r\multimap x)=\lambda(x)\ominus r$;

\item  $\lambda(x\land y)= \max\{\lambda(x),\lambda(y)\}$ for all $x,y\in X$, where   $x\land y$ is a meet of $x$ and $y$ in $(X,\sqsubseteq_d)$. \end{enumerate}
\end{prop}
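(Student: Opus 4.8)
The plan is to deduce this from Proposition \ref{real-valued ideal} by the standard duality $\lambda \text{ is a } [0,\infty]\text{-filter of } (X,d) \iff \lambda \text{ is a } [0,\infty]\text{-ideal of } (X,d^{\mathrm{op}})$, taking care that all the auxiliary structures invoked in Proposition \ref{real-valued ideal} transpose correctly. First I would record the dictionary between $(X,d)$ and $(X,d^{\mathrm{op}})$: the underlying order satisfies $\sqsubseteq_{d^{\mathrm{op}}} \,=\, (\sqsubseteq_d)^{\mathrm{op}}$, so a join of $x,y$ in $(X,\sqsubseteq_{d^{\mathrm{op}}})$ is exactly a meet $x\land y$ in $(X,\sqsubseteq_d)$; and the tensor of $r$ with $x$ in $(X,d^{\mathrm{op}})$ is, by definition, an element $z$ with $d^{\mathrm{op}}(z,y)=d^{\mathrm{op}}(x,y)\ominus r$ for all $y$, i.e. $d(y,z)=d(y,x)\ominus r$, which is precisely the cotensor $r\rightarrowtail x$ in $(X,d)$. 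Thus condition (i) of Proposition \ref{real-valued ideal} applied to $(X,d^{\mathrm{op}})$, namely $\lambda\big((r\otimes x)_{d^{\mathrm{op}}}\big)=\lambda(x)\ominus r$, becomes exactly condition (i) here, $\lambda(r\rightarrowtail x)=\lambda(x)\ominus r$, and similarly condition (ii) transposes as stated.

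Next I would check the one genuine hypothesis-matching point: Proposition \ref{real-valued ideal} requires the ambient metric space to be \emph{cocomplete}, whereas here we assume $(X,d)$ is \emph{complete}. But a weight of $(X,d^{\mathrm{op}})$ is the same thing as a coweight of $(X,d)$, and $(X,d^{\mathrm{op}})$ is cocomplete if and only if it is complete, which (by the stated equivalence of completeness and cocompleteness, and the symmetry $d^{\mathrm{op}\,\mathrm{op}}=d$) holds if and only if $(X,d)$ is complete. So $(X,d)$ complete gives $(X,d^{\mathrm{op}})$ cocomplete, and Proposition \ref{real-valued ideal} applies verbatim to $(X,d^{\mathrm{op}})$. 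One should also note that non-expansiveness of $\lambda\colon(X,d)\to([0,\infty],d_R)$ as used implicitly (a coweight condition) matches non-expansiveness of $\lambda\colon(X,d^{\mathrm{op}})\to([0,\infty],d_R)$ as a weight, since $d_R$ is self-opposite-free in the right way: a coweight of $(X,d)$ is a non-expansive map into $d_R$ precisely when it is a weight of $(X,d^{\mathrm{op}})$.

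Putting these together: $\lambda$ is a $[0,\infty]$-filter of $(X,d)$ iff $\lambda$ is a $[0,\infty]$-ideal of the cocomplete space $(X,d^{\mathrm{op}})$ iff (Proposition \ref{real-valued ideal}) $\lambda$ satisfies $\lambda\big(r\otimes_{d^{\mathrm{op}}} x\big)=\lambda(x)\ominus r$ and $\lambda\big(x\vee_{d^{\mathrm{op}}} y\big)=\max\{\lambda(x),\lambda(y)\}$, iff (by the dictionary above) $\lambda$ satisfies (i) and (ii) as stated. I do not anticipate a serious obstacle; the only care required is the bookkeeping in the previous paragraph — verifying that ``cocomplete in $d^{\mathrm{op}}$'' is equivalent to ``complete in $d$'' and that tensor/join in $d^{\mathrm{op}}$ are cotensor/meet in $d$ — and the (routine) observation that these transpositions are exactly what converts each clause of Proposition \ref{real-valued ideal} into the corresponding clause here. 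Hence one could even state this proof as simply ``dual to Proposition \ref{real-valued ideal},'' with the dictionary spelled out for the reader's convenience.
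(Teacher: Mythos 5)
Your proposal is correct and is essentially the paper's own argument: the paper offers no written proof, simply asserting the proposition is ``dual to Proposition~\ref{real-valued ideal},'' and your dictionary (a $[0,\infty]$-filter of $(X,d)$ is by definition a $[0,\infty]$-ideal of $(X,d^{\mathrm{op}})$; tensor in $d^{\mathrm{op}}$ is cotensor in $d$; joins in $\sqsubseteq_{d^{\mathrm{op}}}$ are meets in $\sqsubseteq_d$; completeness of $(X,d)$ gives cocompleteness of $(X,d^{\mathrm{op}})$) is exactly the dualization being invoked. The only cosmetic point is that the clause about $d_R$ being ``self-opposite-free'' is unnecessary, since a coweight of $(X,d)$ is a weight of $(X,d^{\mathrm{op}})$ by definition in the paper.
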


\begin{rem}\label{finite weighted limits} Let $(X,d)$ be a complete (hence cocomplete) metric space. Then, in the terminology of enriched category theory \cite{Borceux1994}, both  $r\multimap x$ and   $x\wedge y$ are finite weighted limits in $(X,d)$, i.e., weighted limit of a functor from a finite $[0,\infty]$-enriched category  to $(X,d)$.   In this perspective, a $[0,\infty]$-filter   is  just  a $[0,\infty]$-functor $\lambda\colon (X,d)\lra ([0,\infty],d_L)$ that preserves finite weighted limits; a $[0,\infty]$-ideal  is just  a contravariant $[0,\infty]$-functor $\phi\colon (X,d)\lra ([0,\infty],d_L)$ that transforms finite weighted colimits into finite weighted limits.  \end{rem}

Let $f\colon (X,d_X)\lra(Y,d_Y)$ be a non-expansive map.  It is  readily verified that if $\phi$ is a $[0,\infty]$-ideal    of $(X,d_X)$,  then $f^\ra(\phi)$ is a $[0,\infty]$-ideal  of $(Y,d_Y)$. We say that   $f$ is  \emph{$[0,\infty]$-Scott continuous} if $f$ preserves colimits of $[0,\infty]$-ideals \cite{AW2011}; that is, for each $[0,\infty]$-ideal $\phi$ of $(X,d_X)$, $f(\colim\phi)=\colim f^\ra(\phi)$ whenever $\colim\phi$ exists.

In the literature, $[0,\infty]$-Scott continuous maps are also known as \emph{Yoneda continuous} maps. Here is the reason.
An element $x$ of a metric space $(X,d)$ is called a \emph{Yoneda limit} of a forward Cauchy net $\{x_i\}_i$ if for all $y\in X$,
$$d(x,y)=\inf_i\sup_{j\geq i}d(x_j,y).$$
It is known (see e.g. \cite{FSW}) that   $x$ is a Yoneda limit of a forward Cauchy net $\{x_i\}_i$ if and only if $x$ is a colimit of the $[0,\infty]$-ideal $\inf_i\sup_{j\geq i}d(-,x_j)$. So, a non-expansive map preserves Yoneda limits (hence  Yoneda continuous) if and only if it is $[0,\infty]$-Scott continuous.

From Proposition \ref{colimit as join for ideal} one immediately derives the following:
\begin{prop}\label{dg}{\rm(\cite{LZ2020})} Let $(X,d_X)$ and $(Y,d_Y)$ be   cocomplete metric spaces. Then a non-expansive  map $f\colon (X,d_X)\lra(Y,d_Y)$ is $[0,\infty]$-Scott continuous if and only if, as an order-preserving map from $(X,\sqsubseteq_{d_X})$ to $(Y,\sqsubseteq_{d_Y})$, $f$ is Scott continuous. \end{prop}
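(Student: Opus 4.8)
The plan is to deduce everything from Proposition \ref{colimit as join for ideal}. For a cocomplete metric space $(Z,d)$ that result writes every $[0,\infty]$-ideal $\phi$ as $d(-,\Delta(\phi))\coloneqq\inf_{z\in\Delta(\phi)}d(-,z)$, where $\Delta(\phi)\coloneqq\{z\in Z\mid\phi(z)=0\}$ is an ideal of $(Z,\sqsubseteq_d)$, and it identifies a colimit of $\phi$ with a join of $\Delta(\phi)$; conversely, for any ideal $D$ of $(Z,\sqsubseteq_d)$ the function $d(-,D)$ is a $[0,\infty]$-ideal and, since every $z\in\Delta(d(-,D))$ lies below $\bigvee D$ by the triangle inequality, its colimit is $\bigvee D$. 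Note also that $(X,\sqsubseteq_{d_X})$ and $(Y,\sqsubseteq_{d_Y})$ are directed complete by Proposition \ref{cocomplete via tensor}, so all joins below exist, and that $f$ preserves the underlying orders because it is non-expansive.

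The one computation I would carry out is that $f^\ra$ intertwines these two descriptions: for every ideal $D$ of $(X,\sqsubseteq_{d_X})$, $f^\ra(d_X(-,D))=d_Y(-,f(D))\coloneqq\inf_{x\in D}d_Y(-,f(x))$. Unwinding the definition of $f^\ra$ and exchanging the two infima, this reduces to the identity $\inf_{z\in X}\big(d_X(z,x)+d_Y(y,f(z))\big)=d_Y(y,f(x))$ for each fixed $x\in D$ and $y\in Y$: the inequality ``$\le$'' is the value at $z=x$, and ``$\ge$'' is the triangle inequality in $Y$ together with non-expansiveness of $f$ (this is the only place where non-expansiveness, rather than mere order preservation, is used). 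Since $f$ preserves the underlying order, $f(D)$ is directed, and $d_Y(-,f(D))$ coincides with $d_Y(-,\downarrow f(D))$ for the ideal $\downarrow f(D)$ it generates; so by the previous paragraph $\colim f^\ra(d_X(-,D))=\bigvee f(D)$.

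Combining these: given an arbitrary $[0,\infty]$-ideal $\phi$ of $(X,d_X)$, put $D=\Delta(\phi)$; then $\phi=d_X(-,D)$, $\colim\phi=\bigvee D$, and $\colim f^\ra(\phi)=\bigvee f(D)$. Hence the condition ``$f(\colim\phi)=\colim f^\ra(\phi)$ for every $[0,\infty]$-ideal $\phi$'' is word for word the condition ``$f(\bigvee D)=\bigvee f(D)$ for every ideal $D$ of $(X,\sqsubseteq_{d_X})$''; and since a directed set and the ideal it generates have the same join and $f$ preserves order, the latter says exactly that $f\colon(X,\sqsubseteq_{d_X})\lra(Y,\sqsubseteq_{d_Y})$ is Scott continuous.

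I do not anticipate a genuine obstacle here: the real content is already packaged inside Proposition \ref{colimit as join for ideal}, and the argument is essentially a translation through it, the only new ingredient being the short computation $f^\ra(d_X(-,D))=d_Y(-,f(D))$. The one point worth flagging is that, the spaces not being assumed separated, colimits and order joins are unique only up to the equivalence $a\sqsubseteq b\sqsubseteq a$, so every ``equality'' of colimits/joins above should be read up to this equivalence.
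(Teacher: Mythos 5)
Your argument is correct and is exactly the route the paper intends: the paper gives no details beyond ``From Proposition \ref{colimit as join for ideal} one immediately derives the following'', and your proof is precisely the elaboration of that remark, with the only extra ingredient being the (correct) computation $f^\ra(d_X(-,D))=d_Y(-,f(D))$ and the observation that passing from the directed set $f(D)$ to the ideal it generates changes neither the weight $d_Y(-,f(D))$ nor the join. Your closing caveat about colimits and joins being determined only up to the equivalence $a\sqsubseteq b\sqsubseteq a$ in the non-separated case is also the right reading of the statement.
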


\section{Compact functions of approach spaces}
While metric spaces are $[0,\infty]$-valued ordered sets, approach spaces are $[0,\infty]$-valued topological spaces. \begin{defn}(\cite{RL89,Lowen15}) An approach space is a set $X$ together with a map $\delta\colon X\times 2^X\lra[0,\infty]$, subject to the following conditions:  for all $x\in X$ and $A,B\in 2^X$,
\begin{enumerate}[label=\rm(A\arabic*)] \setlength{\itemsep}{0pt}
  \item  $\delta(x,\{x\})$=0;
  \item  $\delta(x,\varnothing)=\infty$;
  \item  $\delta(x,A\cup B)=\min\{\delta(x,A),\delta(x,B)\}$;
  \item  $\delta(x,A)\leq\sup\limits_{b\in B}\delta(b,A)+\delta(x,B)$.
\end{enumerate}
The value $\delta(x,A)$, called the distance from $x$ to $A$, measures how far $x$ is to $A$.

A   map  $f\colon (X,\delta_X)\lra (Y,\delta_Y)$ between approach spaces is continuous if
$$\delta_X(x,A)\geq\delta_Y(f(x),f(A))$$
for all $A\subseteq X$ and $x\in X$.\end{defn} Approach spaces and continuous maps form a category $$\sf App.$$

\begin{example}\label{p} (\cite{RL97,Lowen15})
For all $x\in[0,\infty]$ and $A\subseteq[0,\infty]$, let
$$\delta_{\mathbb{P}}(x,A)=
\begin{cases}
x\ominus\sup A, & A\neq\varnothing,\\
\infty, & A=\varnothing.
\end{cases}
$$
Then $\delta_{\mathbb{P}}$ is a distance on $[0,\infty]$. The space $$([0,\infty],\delta_{\mathbb{P}})$$ is denoted by $\mathbb{P}$.
\end{example}

Like topological spaces, approach spaces can be  described in many equivalent ways. Among  them we need the characterizations by   \emph{lower regular functions} and \emph{upper regular functions}. While the distance function  $\delta$ of an approach space $(X,\delta)$ is an analog of the closure operator of a topological space, lower regular functions  and  upper regular functions are, respectively, analogs of closed sets and open sets in the $[0,\infty]$-enriched context.
\begin{center}\begin{tabular}{  l l }
\toprule topological space ~~~ & approach space \\
\midrule
   closure operator   & distance map\\ \midrule   closed sets   & lower regular functions
     \\ \midrule  open sets & upper regular functions \\
\bottomrule \end{tabular}
%\caption{}
\end{center}

\begin{defn}(\cite{RL97,Lowen15})
A collection $\mathfrak{L}\subseteq[0,\infty]^X$ is called a lower regular function frame if it satisfies:
\begin{enumerate}[label=\rm(L\arabic*)] \setlength{\itemsep}{0pt}
  \item[(L1)] For each subset $\{\phi_i\}_{i\in I}$ of $\mathfrak{L}$, $\sup_{i\in I}\phi_i\in\mathfrak{L}$.
  \item[(L2)] For all $\phi,\psi\in\mathfrak{L}$, $\inf\{\phi,\psi\}\in\mathfrak{L}X$.
  \item[(L3)] For all $\phi\in\mathfrak{L}$ and $r\in[0,\infty]$, both $\phi+r$ and $\phi\ominus r$ are in $\mathfrak{L}$.
\end{enumerate}
\end{defn}

For each approach space $(X,\delta)$, the set $\mathfrak{L}X$ of all  continuous maps $(X,\delta)\lra\mathbb{P}$ is a lower regular function frame, where $\mathbb{P}$ is the approach space given in Example \ref{p}.  Elements of $\mathfrak{L}X$  are called \emph{lower regular functions} of $(X,\delta)$. Conversely, if $\mathfrak{L}\subseteq[0,\infty]^X$ is a lower regular function frame, then $X$ together with the map $$\delta\colon X\times 2^X\lra [0,\infty], \quad \delta(x,A)=\sup\{\phi(x)\mid \phi\in\mathfrak{L}, \forall a\in A,\phi(a)=0\}$$
is an approach space with $\mathfrak{L}$ being its set of lower regular functions. Thus, every approach space is determined by its  lower regular functions, see e.g. \cite{RL97,Lowen15}.

 Let $(X,\delta)$ be an approach space and $x\in X$. It is readily verified that $\delta(-,\{x\})$ is a lower regular function and
\begin{equation}\label{YonedaforLR}
\rho_X(\delta(-,\{x\}),\phi)=\phi(x)
\end{equation}
for each lower regular function $\phi$ of $(X,\delta)$.

For each approach space $(X,\delta)$, let $\mathfrak{U}X$ be the subset of  $[0,\infty]^X$ consisting of elements $\lambda$ such that either $\lambda= \infty_X$ or $\lambda$ is bounded and $r\ominus\lambda\in\mathfrak{L}X$ for all $r\in[0,\infty]$.

It is not hard to check that $$\mathfrak{U}X=\{\infty_X\}\cup\Big\{\lambda\in[0,\infty]^X\mid \sup_{x\in X}\lambda(x)<\infty~\text{and}~\Big(\sup_{x\in X}\lambda(x)\Big)\ominus\lambda\in\mathfrak{L}X \Big\} $$ and that $\mathfrak{U}X$ satisfies the following conditions:\begin{enumerate}[label=\rm(U\arabic*)] \setlength{\itemsep}{0pt}
  \item  $\infty_X\in\mathfrak{U}X$.
  \item For all $\lambda\in\mathfrak{U}X$, if  $\lambda\not=\infty_X$, then $\lambda$ is bounded.
  \item  For each subset $\{\lambda_i\}_{i\in I}$ of $\mathfrak{U}X$, $\inf_{i \in I}\lambda_i \in\mathfrak{U}X$.
  \item  For all $\lambda,\mu\in\mathfrak{U}X$, $\sup\{\lambda,\mu\} \in\mathfrak{U}X$.
  \item  For all $\lambda\in\mathfrak{U}X$ and $r\in[0,\infty]$, both $\lambda+r$ and $\lambda\ominus r$ are in $\mathfrak{U}X$.
\end{enumerate} Elements of $\mathfrak{U}X$ are called \emph{upper regular functions} of $(X,\delta)$ \cite{Lowen15}. We hasten to note that the definition given here is slightly different from that in \cite{Lowen15}, the difference is that   the function $\infty_X$, which is an analog of the empty set, is postulated to be upper regular.
 It is known  (see e.g. \cite{Lowen15}) that $\mathfrak{L}X$ is the smallest lower regular function frame that contains $$\{r\ominus\lambda\mid r\in[0,\infty],\lambda\in\mathfrak{U}X\}.$$ Thus, every approach space is also characterized by its upper regular functions.

\begin{prop}{\rm(\cite[Theorem 1.3.3]{Lowen15})} For a map $f\colon(X,\delta_X)\lra(Y,\delta_Y)$ between approach spaces, the following are equivalent: \begin{enumerate}[label=\rm(\arabic*)] \setlength{\itemsep}{0pt}
  \item $f$ is a continuous map. \item For each lower regular function $\phi$ of $(Y,\delta_Y)$, $\phi\circ f$ is a lower regular function  of $(X,\delta_X)$. \item For each upper regular function $\lambda$ of $(Y,\delta_Y)$, $\lambda\circ f$ is an upper regular function  of $(X,\delta_X)$.\end{enumerate}\end{prop}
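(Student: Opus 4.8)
The plan is to establish the two biconditionals $(1)\Leftrightarrow(2)$ and $(2)\Leftrightarrow(3)$ via the four implications $(1)\Rightarrow(2)$, $(2)\Rightarrow(1)$, $(2)\Rightarrow(3)$ and $(3)\Rightarrow(2)$; three of these are essentially formal. For $(1)\Rightarrow(2)$, recall that $\mathfrak{L}Y$ is by definition the set of contractions $(Y,\delta_Y)\lra\mathbb{P}$, and a composite of contractions is again a contraction (that is the point of the category $\sf App$); hence $\phi\circ f$ is a contraction $(X,\delta_X)\lra\mathbb{P}$, i.e.\ $\phi\circ f\in\mathfrak{L}X$, for every $\phi\in\mathfrak{L}Y$. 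For $(2)\Rightarrow(1)$, I would use the recovery formula $\delta_Y(y,B)=\sup\{\psi(y)\mid\psi\in\mathfrak{L}Y,\ \psi|_B\equiv 0\}$ (and its analogue for $X$): given $x\in X$ and $A\subseteq X$, any $\psi\in\mathfrak{L}Y$ vanishing on $f(A)$ yields, via $(2)$, a function $\psi\circ f\in\mathfrak{L}X$ vanishing on $A$, so $\psi(f(x))=(\psi\circ f)(x)\le\delta_X(x,A)$; taking the supremum over all such $\psi$ gives $\delta_Y(f(x),f(A))\le\delta_X(x,A)$. For $(2)\Rightarrow(3)$, if $\lambda\in\mathfrak{U}Y$ equals $\infty_Y$ then $\lambda\circ f=\infty_X\in\mathfrak{U}X$; otherwise $\lambda$ is bounded, hence so is $\lambda\circ f$, and for each $r\in[0,\infty]$ we have $r\ominus(\lambda\circ f)=(r\ominus\lambda)\circ f\in\mathfrak{L}X$ by $(2)$ because $r\ominus\lambda\in\mathfrak{L}Y$; thus $\lambda\circ f\in\mathfrak{U}X$ by the definition of upper regular function.

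The only implication with genuine content is $(3)\Rightarrow(2)$, and here I would argue through the way $\mathfrak{L}Y$ is generated. Put $\mathfrak{M}=\{\phi\in[0,\infty]^Y\mid\phi\circ f\in\mathfrak{L}X\}$. Since precomposition with $f$ commutes with pointwise $\sup$, pointwise $\inf$ and with the operations $(-)+r$ and $(-)\ominus r$, and since $\mathfrak{L}X$ satisfies (L1)--(L3), the set $\mathfrak{M}$ is itself a lower regular function frame on $Y$. By hypothesis $(3)$, $\lambda\circ f\in\mathfrak{U}X$ for every $\lambda\in\mathfrak{U}Y$, and therefore $(r\ominus\lambda)\circ f=r\ominus(\lambda\circ f)\in\mathfrak{L}X$ for every $r\in[0,\infty]$ (using $0_X\in\mathfrak{L}X$, which holds since $0_X$ is a contraction to $\mathbb{P}$, to cover the case $\lambda\circ f=\infty_X$); that is, $r\ominus\lambda\in\mathfrak{M}$. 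So $\mathfrak{M}$ is a lower regular function frame containing $\{r\ominus\lambda\mid r\in[0,\infty],\ \lambda\in\mathfrak{U}Y\}$, and since $\mathfrak{L}Y$ is the \emph{smallest} such frame we conclude $\mathfrak{L}Y\subseteq\mathfrak{M}$, which is exactly statement $(2)$.

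The main (rather mild) obstacle is $(3)\Rightarrow(2)$: the temptation is to look for an explicit formula for $\delta_Y$ in terms of $\mathfrak{U}Y$, but the excerpt provides such a formula only in terms of $\mathfrak{L}Y$, so instead one must exploit the universal property that $\mathfrak{L}Y$ is freely generated, as a lower regular function frame, by the shifted upper regular functions $r\ominus\lambda$. Once that is recognized, the remaining work — checking that $\mathfrak{M}$ inherits (L1)--(L3) and that the generators lie in $\mathfrak{M}$ — is routine and amounts to the elementary compatibility of $(-)\circ f$ with the pointwise lattice operations and the $[0,\infty]$-shifts.
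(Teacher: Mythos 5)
The paper offers no proof of this proposition at all: it is quoted directly from Lowen (\cite[Theorem 1.3.3]{Lowen15}), so there is no in-paper argument to compare yours against. Your proof is correct and self-contained relative to the facts the paper does record: $(1)\Rightarrow(2)$ is closure of contractions under composition, $(2)\Rightarrow(1)$ correctly uses the recovery formula $\delta(x,A)=\sup\{\phi(x)\mid\phi\in\mathfrak{L},\ \phi|_A\equiv 0\}$ stated in the paper, $(2)\Rightarrow(3)$ follows from the definition of $\mathfrak{U}X$ via $r\ominus\lambda\in\mathfrak{L}Y$, and your $(3)\Rightarrow(2)$ argument via the frame $\mathfrak{M}=\{\phi\mid\phi\circ f\in\mathfrak{L}X\}$ is sound, resting on the paper's assertion that $\mathfrak{L}Y$ is the smallest lower regular function frame containing $\{r\ominus\lambda\mid r\in[0,\infty],\ \lambda\in\mathfrak{U}Y\}$; you also correctly dispose of the degenerate cases $\lambda=\infty_Y$ and $\lambda\circ f=\infty_X$ using $0_X\in\mathfrak{L}X$. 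The only caveat is that the generation statement you lean on is itself only cited (not proved) in the paper, so your argument, like the paper's citation, ultimately defers to Lowen's book for that ingredient.
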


\begin{prop}
For each approach space $(X,\delta)$, both $(\mathfrak{L}X,\rho_X)$ and $(\mathfrak{U}X,\rho_X)$ are cocomplete metric spaces.\end{prop}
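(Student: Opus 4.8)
The plan is to check the criterion of Proposition \ref{cocomplete via tensor} for $(\mathfrak{U}X,\rho_X)$, and to deduce the claim for $(\mathfrak{L}X,\rho_X)$ from the dual criterion for completeness together with the equivalence ``complete $\Leftrightarrow$ cocomplete'' recorded above. The one thing to keep in mind throughout is that the underlying order $\sqsubseteq$ on $[0,\infty]^X$ is the \emph{reverse} pointwise order, so that joins in $\sqsubseteq$ are computed as pointwise infima and meets as pointwise suprema; the axioms (U1)--(U4) and (L1)--(L3) then say exactly which of these survive inside $\mathfrak{U}X$, resp.\ $\mathfrak{L}X$, and that both families are stable under adding a constant ($=$ tensoring) and under truncated subtraction of a constant ($=$ cotensoring).

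For $(\mathfrak{U}X,\rho_X)$ I would argue as follows. By (U2) the set $\mathfrak{U}X$ is closed under arbitrary pointwise infima; since these are precisely the joins with respect to $\sqsubseteq$, every subset of $\mathfrak{U}X$ has a join in $(\mathfrak{U}X,\sqsubseteq)$ and it agrees with the join taken in the ambient space $([0,\infty]^X,\rho_X)$ --- this is condition (ii). By (U4), $r+\lambda\in\mathfrak{U}X$ for every $\lambda\in\mathfrak{U}X$ and $r\in[0,\infty]$; since the tensor in $([0,\infty]^X,\rho_X)$ is $r\otimes\lambda=r+\lambda$ and a tensor is inherited by a subspace as soon as it lies in it, $(\mathfrak{U}X,\rho_X)$ is tensored, which is condition (i). Condition (iii) then comes for free: the identity $\rho_X(\bigvee A,\lambda)=\sup_{\mu\in A}\rho_X(\mu,\lambda)$ is just a reformulation of $a\ominus\inf_i b_i=\sup_i(a\ominus b_i)$, which already holds in $([0,\infty]^X,\rho_X)$, and all functions occurring in it lie in $\mathfrak{U}X$. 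Hence $(\mathfrak{U}X,\rho_X)$ is cocomplete.

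For $(\mathfrak{L}X,\rho_X)$ I would pass to the dual side. By (L1) the family $\mathfrak{L}X$ is closed under arbitrary pointwise suprema, i.e.\ under arbitrary meets in $\sqsubseteq$ (note that $0_X$, the empty supremum, lies in $\mathfrak{L}X$, so $(\mathfrak{L}X,\sqsubseteq)$ is in particular a complete lattice), and by (L3) it contains the cotensors $r\rightarrowtail\phi=\phi\ominus r$. Applying Proposition \ref{cocomplete via tensor} to the opposite metric gives the dual statement: $(\mathfrak{L}X,\rho_X)$ is \emph{complete} as soon as it is cotensored, every subset has a meet in $\sqsubseteq$, and $\rho_X(\phi,\bigwedge A)=\sup_{\psi\in A}\rho_X(\phi,\psi)$ for all $\phi$ and $A$; the first two points have just been verified, and the third is again the ambient-space identity $\sup_i b_i\ominus a=\sup_i(b_i\ominus a)$. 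Since a metric space is complete if and only if it is cocomplete, $(\mathfrak{L}X,\rho_X)$ is cocomplete. (Should one prefer to stay on the colimit side, condition (iii) of Proposition \ref{cocomplete via tensor} for $\mathfrak{L}X$ can be obtained directly: for $A\subseteq\mathfrak{L}X$ and $\phi\in\mathfrak{L}X$, setting $c\coloneqq\sup_{\psi\in A}\rho_X(\psi,\phi)$ and assuming $c<\infty$, the function $\phi\ominus c\in\mathfrak{L}X$ is an upper bound of $A$ in $\sqsubseteq$, so $\bigvee A\sqsubseteq\phi\ominus c$, and the triangle inequality yields $\rho_X(\bigvee A,\phi)\le c$.)

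The only genuine subtlety --- and what I would call the main obstacle --- is the asymmetry buried in the axioms: $\mathfrak{U}X$ is closed under \emph{arbitrary} pointwise infima but only \emph{finite} pointwise suprema, while $\mathfrak{L}X$ is closed under \emph{arbitrary} pointwise suprema but only \emph{finite} pointwise infima. Consequently the joins of $(\mathfrak{U}X,\sqsubseteq)$ are inherited from the ambient function space --- so cocompleteness of $\mathfrak{U}X$ is almost automatic --- whereas its meets are not, and the roles are reversed for $\mathfrak{L}X$; if one insists on verifying cocompleteness of $\mathfrak{L}X$ head-on rather than dualizing, one must use closure under $\ominus r$ (axiom (L3)) to manufacture the needed upper bound inside $\mathfrak{L}X$. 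Everything else is a routine transcription of elementary identities in $[0,\infty]$.
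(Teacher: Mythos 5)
Your proposal is correct and amounts to the same argument as the paper's: the paper's one-line proof asserts precisely that $\mathfrak{U}X$ is closed in $([0,\infty]^X,\rho_X)$ under colimits and $\mathfrak{L}X$ under limits (with $\lambda+r$, $\lambda\ominus r$ as tensor and cotensor), which is exactly what your verification of the criteria of Proposition \ref{cocomplete via tensor} and its dual, followed by the complete $\Leftrightarrow$ cocomplete equivalence, spells out in detail.
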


\begin{proof}
It follows from the fact that $\mathfrak{L}X$ is closed in $([0,\infty]^X,\rho_X)$ under the formation of weighted limits and $\mathfrak{U}X$ is closed in $([0,\infty]^X,\rho_X)$ under the formation of weighted colimits.
\end{proof}

In particular, for each $\phi\in\mathfrak{L}X$ and $r\in[0,\infty]$, $r+\phi $ and $\phi\ominus r$ are the tensor and cotensor of $\phi$ with $r$ in $(\mathfrak{L}X,\rho_X)$, respectively; for each $\lambda\in\mathfrak{U}X$ and $r\in[0,\infty]$, $\lambda+r$ and $\lambda\ominus r$ are the tensor and cotensor of $\lambda$ with $r$ in $(\mathfrak{U}X,\rho_X)$, respectively.

For each approach space $(X,\delta)$, it is clear that $$d_\delta(x,y)\coloneqq\delta(x,\{y\})$$ is a metric on $X$, called  the \emph{specialization metric} of $(X,\delta)$. Assigning to each approach space its specialization metric gives rise to a functor
$$\Omega\colon {\sf App}\lra {\sf Met}.$$
Every lower regular function of $(X,\delta)$ is a weight and every upper regular function is a coweight of $(X,d_\delta)$.

A function $\lambda\colon X\lra[0,\infty]$ is  \emph{saturated}   if it is a coweight of the metric space $(X,d_\delta)$; that is,  $d_\delta(x,y)+\lambda(x)\geq \lambda(y)$ for all $x,y\in X$. It is easily verified that
\begin{equation}\label{YonedaforRL}
\delta(-,\{x\})\cpt\lambda=\lambda(x)
\end{equation}
for each saturated function $\lambda$ and each $x\in X$.

All upper regular functions are saturated; we have more.

\begin{prop}
Let $(X,\delta)$ be an approach space and $\theta\colon X\lra[0,\infty]$. \begin{enumerate}[label=\rm(\roman*)] \setlength{\itemsep}{0pt}
  \item The function $ \uparrow\!\theta\coloneqq\sup\{\lambda\in\mathfrak{U}X\mid\lambda \leq\theta\}$ is   saturated. \item  $\theta$ is saturated if and only if $\theta = \uparrow\!\theta$.\end{enumerate} Thus, $\uparrow\!\theta$ is called the saturation of $\theta$.
\end{prop}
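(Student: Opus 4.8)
The plan is to treat part (i) as a routine closure property of coweights and to reduce part (ii) to exhibiting, for each point $x$, enough upper regular functions sitting below a saturated function. For (i), recall that ``saturated'' means ``coweight of $(X,d_\delta)$'', that every upper regular function is such a coweight, and that the coweights of any metric space are closed under pointwise suprema: if $\lambda_i(x)+d_\delta(x,y)\ge\lambda_i(y)$ for every $i$, then $\sup_i\lambda_i(x)+d_\delta(x,y)\ge\lambda_i(y)$ for every $i$, so $\sup_i\lambda_i$ is again a coweight. Since $\uparrow\!\theta$ is by definition the pointwise supremum of the family $\{\lambda\in\mathfrak{U}X\mid\lambda\le\theta\}$ (non-empty, as it contains $0_X\in\mathfrak{U}X$), it is a coweight, i.e. saturated; and then the implication ``$\theta=\uparrow\!\theta\Rightarrow\theta$ saturated'' of (ii) is immediate.

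For the converse in (ii), the inequality $\uparrow\!\theta\le\theta$ is immediate since $\uparrow\!\theta$ is a supremum of functions $\le\theta$, so the content is $\theta\le\uparrow\!\theta$. For this I would show that for every $x\in X$ and every $r\in[0,\infty)$ with $r\le\theta(x)$ there exists $\lambda\in\mathfrak{U}X$ with $\lambda\le\theta$ and $\lambda(x)=r$; taking the supremum over such $r$ then forces $\uparrow\!\theta(x)\ge\theta(x)$, whether $\theta(x)$ is finite or $\infty$. My candidate is
$$\mu_{x,r}\coloneqq r\ominus\delta(-,\{x\}).$$

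I would then verify three things. First, $\mu_{x,r}\in\mathfrak{U}X$: it is bounded, with supremum $r$ (attained at $x$, since $\delta(x,\{x\})=0$ by (A1)), and $r\ominus\mu_{x,r}=\min\{r_X,\delta(-,\{x\})\}$ lies in $\mathfrak{L}X$ because $\delta(-,\{x\})\in\mathfrak{L}X$ by \eqref{YonedaforLR}, constant functions lie in $\mathfrak{L}X$ (apply (L3) to $\delta(-,\{x\})\ominus\infty=0_X$ and then to $0_X+r=r_X$), and $\mathfrak{L}X$ is closed under binary infima by (L2). Second, $\mu_{x,r}\le\theta$: since $\theta$ is a coweight, $\theta(y)+\delta(y,\{x\})\ge\theta(x)\ge r$, hence $\theta(y)\ge r\ominus\delta(y,\{x\})=\mu_{x,r}(y)$ for all $y$. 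Third, $\mu_{x,r}(x)=r\ominus\delta(x,\{x\})=r$. Combining these yields $\uparrow\!\theta=\theta$ and finishes (ii).

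The only genuine computation here is the first point above --- that $\mu_{x,r}$ is actually an upper regular function --- and even this reduces to the bookkeeping identity $r\ominus(r\ominus a)=\min\{r,a\}$ (valid for $a\ge 0$) together with the frame axioms (L2) and (L3); everything else is a one-line use of the coweight inequality for $\theta$ and of axiom (A1). I do not anticipate a substantive obstacle: the edge cases ($X=\varnothing$, or the family defining $\uparrow\!\theta$ reducing to $\{0_X\}$) are harmless and need no separate treatment.
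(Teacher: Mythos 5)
Your proposal is correct and follows essentially the same route as the paper: part (i) is the observation that upper regular functions are coweights of $(X,d_\delta)$ (and coweights are closed under pointwise suprema), and part (ii) hinges on the functions $r\ominus\delta(-,\{x\})$ with $r\le\theta(x)$ finite being upper regular, bounded by $\theta$ via the coweight inequality, and equal to $r$ at $x$ — exactly the paper's argument, merely treating the cases $\theta(x)<\infty$ and $\theta(x)=\infty$ uniformly. Your extra verification that $r\ominus\delta(-,\{x\})\in\mathfrak{U}X$ is a detail the paper leaves implicit, and it is sound.
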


\begin{proof}
(i) Trivial, since every upper regular function of $(X,\delta)$ is a coweight of $(X,d_\delta)$.

(ii)
It suffices to check that if  $\theta$ is  saturated   then $\theta\leq\uparrow\!\theta$. Let $x\in X$. If $\theta(x)<\infty$, then $\theta(x)\ominus\delta(-,\{x\})$ is  upper regular   and $\theta(x)\ominus\delta(-,\{x\})\leq\theta$. So, $\theta(x)=\theta(x)\ominus\delta(x,\{x\})\leq\uparrow\!\theta(x)$. If $\theta(x)=\infty$, then for each $r<\infty$, $r\ominus\delta(-,\{x\})$ is upper regular and $r\ominus\delta(-,\{x\})\leq\theta$. So $\uparrow\!\theta(x)\geq r$, and consequently, $\uparrow\!\theta(x)=\infty$ by arbitrariness of $r$.
\end{proof}

\begin{cor}\label{saturation}
Let $(X,\delta)$ be an approach space and $\theta\in[0,\infty]^X$. \begin{enumerate}[label=\rm(\roman*)] \setlength{\itemsep}{0pt}
  \item For each upper regular function $\lambda$ of $X$,
$\rho_X(\theta,\lambda)=\rho_X(\uparrow\!\theta,\lambda).$ \item
$\uparrow\!\theta=\sup\{\lambda\ominus\rho_X(\theta,\lambda)\mid \lambda\in\mathfrak{U}X\}.$\end{enumerate}
\end{cor}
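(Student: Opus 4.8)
The plan is to reduce both claims to elementary manipulations with the truncated difference $\ominus$, bearing in mind the Convention that $\sup$ always denotes the \emph{pointwise} supremum (which is the meet for the underlying order $\sqsubseteq$), and recalling that by definition $\uparrow\!\theta=\sup\{\lambda\in\mathfrak{U}X\mid\lambda\leq\theta\}$, a function which by the preceding proposition is itself saturated.

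I would establish (ii) first. The crucial point is that the two families
$$S=\{\lambda\in\mathfrak{U}X\mid\lambda\leq\theta\}\qquad\text{and}\qquad T=\{\lambda\ominus\rho_X(\theta,\lambda)\mid\lambda\in\mathfrak{U}X\}$$
in fact coincide, so a fortiori they have the same pointwise supremum, which by definition is $\uparrow\!\theta$. For $T\subseteq S$: given $\lambda\in\mathfrak{U}X$, put $r=\rho_X(\theta,\lambda)=\sup_{x\in X}(\lambda(x)\ominus\theta(x))$; then for each $x$ the inequality $\lambda(x)\ominus\theta(x)\leq r$ gives $\lambda(x)-r\leq\theta(x)$, hence $\lambda(x)\ominus r\leq\theta(x)$, so $\lambda\ominus r\leq\theta$, while $\lambda\ominus r\in\mathfrak{U}X$ by (U4). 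For $S\subseteq T$: if $\lambda\in\mathfrak{U}X$ and $\lambda\leq\theta$ then $\rho_X(\theta,\lambda)=0$, so $\lambda=\lambda\ominus\rho_X(\theta,\lambda)$. The only point needing a moment's attention is the degenerate function $\lambda=\infty_X$, for which $\rho_X(\theta,\infty_X)=\infty$ unless $\theta=\infty_X$; one checks the membership claims still go through in this case.

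Then (i) follows from (ii). Since $\uparrow\!\theta\leq\theta$ and $\phi\mapsto\rho_X(\phi,\lambda)=\sup_{x}(\lambda(x)\ominus\phi(x))$ is order-reversing for the pointwise order, we get $\rho_X(\theta,\lambda)\leq\rho_X(\uparrow\!\theta,\lambda)$. For the reverse inequality, apply (ii) with the given $\lambda\in\mathfrak{U}X$ appearing among the indexing functions: this yields $\lambda\ominus\rho_X(\theta,\lambda)\leq\uparrow\!\theta$ pointwise, whence
$$\rho_X(\uparrow\!\theta,\lambda)\leq\rho_X\big(\lambda\ominus\rho_X(\theta,\lambda),\lambda\big)=\sup_{x\in X}\big(\lambda(x)\ominus(\lambda(x)\ominus\rho_X(\theta,\lambda))\big)\leq\rho_X(\theta,\lambda),$$
the last step being the elementary inequality $a\ominus(a\ominus r)\leq r$. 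Hence $\rho_X(\uparrow\!\theta,\lambda)=\rho_X(\theta,\lambda)$.

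I do not anticipate a real obstacle: the corollary is essentially a bookkeeping consequence of the defining formula for $\uparrow\!\theta$, the closure condition (U4), and two one-line facts about $\ominus$. The only things to be careful with are the sign conventions ($\sqsubseteq$ opposite to the pointwise order, and $\sup$ meaning pointwise supremum) and the behaviour at $\infty$ stemming from the postulated upper regular function $\infty_X$; if one prefers, (i) and (ii) can equally well be proven independently by the same pointwise estimates rather than deriving one from the other.
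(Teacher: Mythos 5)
Your argument is correct: the identification of the two families $\{\lambda\in\mathfrak{U}X\mid\lambda\leq\theta\}$ and $\{\lambda\ominus\rho_X(\theta,\lambda)\mid\lambda\in\mathfrak{U}X\}$ (using (U4) and the elementary facts $\lambda\ominus\rho_X(\theta,\lambda)\leq\theta$ and $a\ominus(a\ominus r)\leq r$, with the boundary cases at $\infty$ handled as you note) gives (ii), and (i) then follows exactly as you say. The paper states the corollary without proof, and your verification is precisely the routine argument it leaves to the reader, so there is nothing to add.
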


Let $(X,\delta)$ be an approach space and let  $\theta\in[0,\infty]^X$. By Proposition \ref{real-valued filter} it is seen readily that
$$\rho_X(\theta,-)\colon \mathfrak{U}X\lra[0,\infty]$$ is a $[0,\infty]$-filter of the cocomplete metric space $(\mathfrak{U}X, \rho_X)$.

\begin{defn} (\cite{AW2011}) A $[0,\infty]$-filter $\lambda$ of a metric space $(X,d)$ is  open  if  $\lambda\colon(X,d)\lra([0,\infty],d_L)$
is $[0,\infty]$-Scott continuous.\end{defn}

Now we introduce the central notion of this paper.

\begin{defn}\label{cf}
Let $(X,\delta)$ be an approach space. A function $\theta\colon X\lra [0,\infty]$ is  compact if $\rho_X(\theta,-)$ is an open $[0,\infty]$-filter of the metric space $(\mathfrak{U}X,\rho_X)$ of upper regular functions.
\end{defn}
This definition is a direct extension of the characterization of compact subsets in \cite[Corollary 2.14]{HM1981} to the $[0,\infty]$-enriched context.

The following property of compact functions follows immediately from Corollary \ref{saturation}.
\begin{prop}\label{upcom}
Let $(X,\delta)$ be an approach space and $\theta\colon X\lra [0,\infty]$ be a function. Then $\theta$ is compact if and only if so is its saturation $\uparrow\!\theta$.
\end{prop}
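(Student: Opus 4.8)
The plan is to reduce the statement to the observation that whether $\theta$ is compact depends only on the single map $\lambda\mapsto\rho_X(\theta,\lambda)$ on $\mathfrak{U}X$, and that this map is left unchanged when $\theta$ is replaced by its saturation $\uparrow\!\theta$.

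First I would unwind Definition \ref{cf}: $\theta$ is compact precisely when the map
$g_\theta\colon(\mathfrak{U}X,\rho_X)\lra\dl$, $g_\theta(\lambda)=\rho_X(\theta,\lambda)$,
is an open $[0,\infty]$-filter of the cocomplete metric space $(\mathfrak{U}X,\rho_X)$; likewise $\uparrow\!\theta$ is compact precisely when $g_{\uparrow\theta}(\lambda)=\rho_X(\uparrow\!\theta,\lambda)$ is an open $[0,\infty]$-filter of $(\mathfrak{U}X,\rho_X)$. Thus everything hinges on comparing the two functions $g_\theta$ and $g_{\uparrow\theta}$ on the domain $\mathfrak{U}X$.

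Next I would invoke Corollary \ref{saturation}(i), which asserts that $\rho_X(\theta,\lambda)=\rho_X(\uparrow\!\theta,\lambda)$ for \emph{every} upper regular function $\lambda$ of $X$. Hence $g_\theta=g_{\uparrow\theta}$ as maps $(\mathfrak{U}X,\rho_X)\lra\dl$, so one of them is an open $[0,\infty]$-filter of $(\mathfrak{U}X,\rho_X)$ if and only if the other is. By the reformulation of the preceding step, this is exactly the claim that $\theta$ is compact if and only if $\uparrow\!\theta$ is compact.

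There is no genuine obstacle here; the only point to keep an eye on is that Corollary \ref{saturation}(i) delivers an honest equality of the two maps on all of $\mathfrak{U}X$, not merely an inequality, so that no properties of open $[0,\infty]$-filters beyond their bare definition are required.
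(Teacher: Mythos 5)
Your proof is correct and is exactly the paper's argument: the paper derives Proposition \ref{upcom} immediately from Corollary \ref{saturation}(i), which gives $\rho_X(\theta,-)=\rho_X(\uparrow\!\theta,-)$ on $\mathfrak{U}X$, so the two compactness conditions coincide by Definition \ref{cf}. No gap; nothing further is needed.
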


Since   $(\mathfrak{U}X,\rho_X)$ is a cocomplete metric space and its underlying order is opposite to the usual pointwise order, from Proposition \ref{dg} we immediately derive the following characterization of compact functions.

\begin{prop}
Let $(X,\delta)$ be a metric space and $\theta\colon X\lra [0,\infty]$. Then $\theta$ is a compact function of $(X,\delta)$ if and only if for each filtered family $\{\lambda_i\}_{i\in I}$ of upper regular functions,
$$\rho_X(\theta,\inf_{i\in I}\lambda_i)=\inf_{i\in I}\rho_X(\theta,\lambda_i).$$
\end{prop}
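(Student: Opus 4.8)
The plan is to deduce the statement directly from Proposition \ref{dg} after unwinding the definitions. Recall that, as observed just before Definition \ref{cf}, the map $\rho_X(\theta,-)\colon(\mathfrak{U}X,\rho_X)\lra([0,\infty],d_L)$ is always a $[0,\infty]$-filter of the cocomplete metric space $(\mathfrak{U}X,\rho_X)$; in particular it is a coweight, hence non-expansive into $([0,\infty],d_L)$. Therefore, by Definition \ref{cf} combined with the definition of open $[0,\infty]$-filter, $\theta$ is a compact function of $(X,\delta)$ if and only if $\rho_X(\theta,-)$ is $[0,\infty]$-Scott continuous.

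Now both $(\mathfrak{U}X,\rho_X)$ and $([0,\infty],d_L)$ are cocomplete metric spaces --- the latter being the space $([0,\infty]^X,\rho_X)$ for $X$ a singleton --- so Proposition \ref{dg} applies to the non-expansive map $\rho_X(\theta,-)$: it is $[0,\infty]$-Scott continuous if and only if it is Scott continuous as an order-preserving map $(\mathfrak{U}X,\sqsubseteq)\lra([0,\infty],\sqsubseteq_{d_L})$. It remains only to translate this last condition. By the convention recorded after the definition of $\rho_X$, the underlying order $\sqsubseteq$ of $(\mathfrak{U}X,\rho_X)$ is the opposite of the usual pointwise order; hence a directed subset of $(\mathfrak{U}X,\sqsubseteq)$ is precisely a family $\{\lambda_i\}_{i\in I}$ of upper regular functions that is filtered for the pointwise order, and its join in $(\mathfrak{U}X,\sqsubseteq)$ is the pointwise infimum $\inf_{i\in I}\lambda_i$, which belongs to $\mathfrak{U}X$ by (U2). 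Likewise $\sqsubseteq_{d_L}$ is the opposite of $\leq$, so joins of directed subsets of $([0,\infty],\sqsubseteq_{d_L})$ are ordinary infima. Thus Scott continuity of $\rho_X(\theta,-)$ says exactly that $\rho_X(\theta,\inf_{i\in I}\lambda_i)=\inf_{i\in I}\rho_X(\theta,\lambda_i)$ for every filtered family $\{\lambda_i\}_{i\in I}$ of upper regular functions, which is the asserted equivalence.

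There is no genuine obstacle here; the only points requiring care are (a) that $\inf_{i\in I}\lambda_i$ is again upper regular, so that the left-hand side of the displayed equation makes sense within $\mathfrak{U}X$ --- this is exactly condition (U2) --- and (b) that the two order reversals are tracked consistently, so that ``preservation of directed joins'' becomes ``preservation of filtered infima'' on both the domain and the codomain. We do not need to check separately that $\rho_X(\theta,-)$ is monotone or non-expansive, since both are already subsumed in its being a $[0,\infty]$-filter.
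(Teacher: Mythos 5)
Your argument is correct and is essentially the paper's own: the paper derives this proposition immediately from Proposition \ref{dg}, using that $\rho_X(\theta,-)$ is always a $[0,\infty]$-filter of the cocomplete metric space $(\mathfrak{U}X,\rho_X)$ and that its underlying order is opposite to the pointwise order. Your careful bookkeeping of the order reversals and the appeal to (U2) just makes explicit what the paper leaves to the reader.
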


Parallel to the fact that continuous images of compact sets are compact, we have:
\begin{prop}Let $f\colon(X,\delta_X)\lra(Y,\delta_Y)$ be a continuous map between approach spaces. Then for each compact function $\theta$   of $(X,\delta_X)$, the function $$f(\theta)\colon Y\lra[0,\infty],\quad f(\theta)(y)=\inf\{\theta(x)\mid f(x)=y\}$$ is  compact in $(Y,\delta_Y)$.\end{prop}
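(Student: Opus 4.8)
The plan is to rewrite the candidate open $[0,\infty]$-filter $\rho_Y(f(\theta),-)$ on $(\mathfrak{U}Y,\rho_Y)$ in terms of the open $[0,\infty]$-filter $\rho_X(\theta,-)$ on $(\mathfrak{U}X,\rho_X)$, and then read off the conclusion from the compactness of $\theta$. Everything hinges on the identity
\[\rho_Y(f(\theta),\mu)=\rho_X(\theta,\mu\circ f)\qquad\text{for every }\mu\in\mathfrak{U}Y,\]
which I would establish by working pointwise in $y\in Y$. If $y\notin f(X)$ then $f(\theta)(y)=\inf\varnothing=\infty$, so $\mu(y)\ominus f(\theta)(y)=0$, which agrees with the supremum over the empty fibre. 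If $y\in f(X)$ then, using $a\ominus\inf_i b_i=\sup_i(a\ominus b_i)$, one has $\mu(y)\ominus f(\theta)(y)=\mu(y)\ominus\inf\{\theta(x)\mid f(x)=y\}=\sup\{\mu(f(x))\ominus\theta(x)\mid f(x)=y\}$. Taking the supremum over all $y\in Y$ and collapsing the double supremum into a single supremum over $X$ yields the identity. This bookkeeping with empty fibres of $f$ and with the interplay of truncated subtraction and infima is the only genuinely delicate point.

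Granting the identity, the rest is formal. Since $f$ is a contraction, $\mu\circ f\in\mathfrak{U}X$ whenever $\mu\in\mathfrak{U}Y$, by the characterization of contractions through upper regular functions; and $\rho_Y(f(\theta),-)$ is automatically a $[0,\infty]$-filter of the cocomplete metric space $(\mathfrak{U}Y,\rho_Y)$ by the observation preceding Definition \ref{cf}. It therefore suffices to show that $\rho_Y(f(\theta),-)$ is open, i.e., $[0,\infty]$-Scott continuous. Invoking the characterization of compact functions by filtered families of upper regular functions, let $\{\mu_i\}_{i\in I}$ be such a family in $\mathfrak{U}Y$. Then $\{\mu_i\circ f\}_{i\in I}$ is a filtered family in $\mathfrak{U}X$, and $(\inf_{i\in I}\mu_i)\circ f=\inf_{i\in I}(\mu_i\circ f)$ because composition with $f$ is evaluated pointwise. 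Hence, combining the identity with the compactness of $\theta$,
\[\rho_Y\Bigl(f(\theta),\inf_{i\in I}\mu_i\Bigr)=\rho_X\Bigl(\theta,\inf_{i\in I}(\mu_i\circ f)\Bigr)=\inf_{i\in I}\rho_X(\theta,\mu_i\circ f)=\inf_{i\in I}\rho_Y(f(\theta),\mu_i),\]
which is precisely openness of $\rho_Y(f(\theta),-)$; so $f(\theta)$ is compact.

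Alternatively, one can organize the middle part categorically: the map $f^\leftarrow\colon(\mathfrak{U}Y,\rho_Y)\lra(\mathfrak{U}X,\rho_X)$, $\mu\mapsto\mu\circ f$, is non-expansive and preserves all pointwise infima, hence is Scott continuous for the underlying orders (which are opposite to the pointwise orders) and so $[0,\infty]$-Scott continuous by Proposition \ref{dg}. The identity then reads $\rho_Y(f(\theta),-)=\rho_X(\theta,-)\circ f^\leftarrow$, a composite of $[0,\infty]$-Scott continuous non-expansive maps between cocomplete metric spaces, hence $[0,\infty]$-Scott continuous; combined with the fact that it is a $[0,\infty]$-filter, this again gives that $f(\theta)$ is compact. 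Either way, the substantive step is the displayed identity and everything downstream is routine.
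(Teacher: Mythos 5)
Your proposal is correct and follows essentially the same route as the paper: the paper's proof consists precisely of the identity $\rho_Y(f(\theta),\lambda)=\rho_X(\theta,\lambda\circ f)$ for $\lambda\in\mathfrak{U}Y$ together with the characterization of compactness via filtered families of upper regular functions, and you supply the (routine but correctly handled) pointwise verification of that identity, including the empty-fibre case. No issues.
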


\begin{proof}This follows directly from the above proposition and the fact that  \[\rho_Y(f(\theta),\lambda)=\rho_X(\theta,\lambda\circ f) \] for every upper regular function $\lambda$ of $(Y,\delta_Y)$. \end{proof}

In a topological space $X$, a subset $K$ is  compact if and only if for each filtered family of closed sets $\{F_i\}_{i\in I}$ such that $K$ meets each $ F_i$, $K$ meets the intersection $\bigcap_{i\in I}F_i$. The following proposition is an analog of this fact in the enriched context.
\begin{prop}\label{compactness via lower regular}
Let $(X,\delta)$ be an approach space. For each function  $\theta\colon X\lra [0,\infty]$, the following are equivalent: \begin{enumerate}[label=\rm(\arabic*)] \setlength{\itemsep}{0pt}
  \item $\theta$ is compact.  \item  For each directed family $\{\phi_j\}_{j\in J}$ of lower regular functions,
  $$(\sup_{j\in J}\phi_j)\cpt\theta=\sup_{j\in J} (\phi_j\cpt\theta).$$
\item The map \[-\pitchfork\theta\colon (\mathfrak{L}X,\rho_X)\lra([0,\infty],d_L) \] preserves limits of $[0,\infty]$-filters. \end{enumerate}
\end{prop}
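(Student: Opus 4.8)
The plan is to prove $(1)\Leftrightarrow(2)$ directly, using the arithmetic interplay between $\rho_X$ and $\pitchfork$ established in Lemma~\ref{sub vs tensor}, and then to note that $(2)\Leftrightarrow(3)$ is just a reformulation. Recall that $\theta$ is compact iff $\rho_X(\theta,-)$ preserves infima of filtered families of upper regular functions (the preceding proposition), and that the duality between $\mathfrak{L}X$ and $\mathfrak{U}X$ is given pointwise by $\phi\mapsto r\ominus\phi$, $\lambda\mapsto r\ominus\lambda$. So a filtered family $\{\lambda_i\}$ in $\mathfrak{U}X$ should correspond, after subtracting from a suitable constant, to a directed family $\{\phi_j\}$ in $\mathfrak{L}X$, with $\inf_i\lambda_i$ corresponding to $\sup_j\phi_j$; and the identities of Lemma~\ref{sub vs tensor}(ii),(iii) should convert the statement ``$\rho_X(\theta,\inf_i\lambda_i)=\inf_i\rho_X(\theta,\lambda_i)$'' into ``$(\sup_j\phi_j)\pitchfork\theta=\sup_j(\phi_j\pitchfork\theta)$''.

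In more detail, first I would dispose of trivial cases: if $\theta=\infty_X$ then $\theta\pitchfork\phi=\infty$ and $\rho_X(\theta,\lambda)=0$ for all $\phi,\lambda$, so both conditions hold vacuously; so assume $\theta$ is bounded. For the direction $(2)\Rightarrow(1)$: given a filtered family $\{\lambda_i\}_{i\in I}$ in $\mathfrak{U}X$ (WLOG all non-vacuous, hence bounded by some common $b<\infty$ using (U4) and cofinality arguments — this is a point to be careful about, since elements of $\mathfrak{U}X$ need not share a finite bound, but one can pass to a cofinal subfamily below a fixed $\lambda_{i_0}+$const), set $\phi_j = b\ominus\lambda_j$, a directed family in $\mathfrak{L}X$ with $\sup_j\phi_j = b\ominus\inf_i\lambda_i$. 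Apply Lemma~\ref{sub vs tensor}(ii) to rewrite $\rho_X(\theta,b\ominus\phi_j)=\rho_X(\theta,\lambda_j)$ as $b\ominus(\lambda_j\pitchfork\theta)$, wait — one must instead use Lemma~\ref{sub vs tensor}(ii) in the form $\rho_X(\theta,b\ominus\lambda)=b\ominus(\lambda\pitchfork\theta)$ with roles matched carefully, obtaining $\rho_X(\theta,\lambda_i)=b\ominus(\phi_i\pitchfork\theta)$ and $\rho_X(\theta,\inf_i\lambda_i)=b\ominus((\sup_j\phi_j)\pitchfork\theta)$; then the hypothesis (2) plus the fact that $b\ominus(-)\colon[0,\infty]\to[0,\infty]$ sends suprema to infima gives exactly $\rho_X(\theta,\inf_i\lambda_i)=\inf_i\rho_X(\theta,\lambda_i)$. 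The reverse direction $(1)\Rightarrow(2)$ runs the same chain backwards, now using Lemma~\ref{sub vs tensor}(iii) to recover $\phi_j\pitchfork\theta$ from $\rho_X(\theta,b\ominus\phi_j)$ — this requires knowing $\phi_j\pitchfork\theta<\infty$, which may fail; so one would first reduce to that case via Lemma~\ref{sub vs tensor}(i), which expresses $\phi\pitchfork\theta$ as a supremum of truncated pitchforks $\inf\{r_X,\phi\}\pitchfork\theta$, each of which is finite when $\theta$ is bounded and inhabited-ish; the case where all these vanish or blow up is handled separately.

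For $(2)\Leftrightarrow(3)$: the map $-\pitchfork\theta\colon(\mathfrak{L}X,\rho_X)\to([0,\infty],d_L)$ is non-expansive (this should be checked, or cited as it is essentially the composition/colimit formula in the Remark), and by Proposition~\ref{dg} it preserves limits of $[0,\infty]$-filters iff, as an order map $(\mathfrak{L}X,\sqsubseteq)\to([0,\infty],\sqsubseteq)$, it is Scott continuous — i.e. preserves directed suprema in the underlying orders. Since the underlying order of $\mathfrak{L}X$ is the usual pointwise order (weights), directed in $\sqsubseteq_{\mathfrak{L}X}$ means directed in the pointwise order, with join $\sup_j\phi_j$; and the underlying order of $([0,\infty],d_L)$ is the reverse of $\leq$, so a directed sup there is the ordinary $\sup$. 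Hence Scott continuity of $-\pitchfork\theta$ says precisely $(\sup_j\phi_j)\pitchfork\theta = \sup_j(\phi_j\pitchfork\theta)$ for directed $\{\phi_j\}$, which is (2). One must also confirm that $[0,\infty]$-filters of $([0,\infty],d_L)$, which is complete, are exactly what their colimits-of-directed-sets description says — but this is immediate from Proposition~\ref{real-valued filter} applied to the one-point-like structure, or simply from Proposition~\ref{dg} itself since $([0,\infty],d_L)$ is cocomplete.

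I expect the main obstacle to be the bookkeeping around finiteness and boundedness: elements of $\mathfrak{U}X$ are individually bounded but a filtered family need not be uniformly bounded, and the conversions in Lemma~\ref{sub vs tensor}(ii),(iii) each carry a side condition ($\lambda\leq b_X$ with $b<\infty$, resp. $\phi\pitchfork\theta\leq b<\infty$). The remedy is routine but must be spelled out: replace the family by a cofinal subfamily dominated by a fixed non-vacuous member, reducing to the uniformly bounded case, and use the truncation identity Lemma~\ref{sub vs tensor}(i) to reduce the pitchfork computations to the finite case before applying (iii). Everything else is the arithmetic of $\ominus$ and the observation that $b\ominus(-)$ is an order-reversing involution on $[0,b]$ turning sups into infs.
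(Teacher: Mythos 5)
Your overall strategy is the paper's own: prove $(1)\Leftrightarrow(2)$ by converting between $\rho_X(\theta,-)$ on $\mathfrak{U}X$ and $-\pitchfork\theta$ on $\mathfrak{L}X$ via $b\ominus(-)$ and Lemma~\ref{sub vs tensor}, truncating with Lemma~\ref{sub vs tensor}(i) when no finite bound is available, and obtain $(2)\Leftrightarrow(3)$ from the order-theoretic description of $[0,\infty]$-filters and their limits. Two steps, however, are wrong as written. First, after disposing of $\theta=\infty_X$ you say ``so assume $\theta$ is bounded''; this does not follow ($\theta$ can be unbounded, or take the value $\infty$ at some points, without being $\infty_X$), and that case is never treated. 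Fortunately boundedness of $\theta$ is never needed: in every application of Lemma~\ref{sub vs tensor}(ii) the bound $b$ is a bound of the lower (or upper) regular functions, not of $\theta$, and the finiteness you need for your use of Lemma~\ref{sub vs tensor}(iii) only requires $\theta\neq\infty_X$, since $\inf\{r_X,\phi\}\pitchfork\theta\leq r+\inf_{x\in X}\theta(x)$. In fact the paper avoids (iii) altogether and runs both directions with (ii) alone, e.g. $b\ominus\bigl((\sup_j\phi_j)\pitchfork\theta\bigr)=\rho_X(\theta,b\ominus\sup_j\phi_j)$ for any finite bound $b$ of $\sup_j\phi_j$, letting $b$ be arbitrary at the end; this eliminates the side conditions you are budgeting for. (Your extra care in replacing a filtered family of $\mathfrak{U}X$ by a cofinal, uniformly bounded subfamily is correct, and is more explicit than the paper's ``without loss of generality''.)

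Second, in $(2)\Leftrightarrow(3)$ you assert that the underlying order of $(\mathfrak{L}X,\rho_X)$ is the usual pointwise order; it is the \emph{opposite} of the pointwise order, exactly as for $([0,\infty]^X,\rho_X)$, so $\phi\sqsubseteq\psi$ iff $\psi\leq\phi$, and your companion claim that ``a directed sup in $([0,\infty],d_L)$ is the ordinary $\sup$'' is inconsistent with its underlying order being the reverse of $\leq$. Taken literally, your identification would make ``Scott continuity'' of $-\pitchfork\theta$ equate $(\sup_j\phi_j)\pitchfork\theta$ with an infimum rather than with $\sup_j(\phi_j\pitchfork\theta)$. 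Moreover Proposition~\ref{dg} as stated concerns colimits of $[0,\infty]$-ideals, while statement (3) is about limits of $[0,\infty]$-filters, so you must use the dual statement (the paper cites the dual of Proposition~\ref{colimit as join for ideal}): in a complete space, limits of $[0,\infty]$-filters are meets, in the underlying order, of order-theoretic filters, and since both underlying orders are reversed these meets are precisely the pointwise directed suprema $\sup_j\phi_j$ in $\mathfrak{L}X$ and the ordinary suprema in $[0,\infty]$. With these corrections (and the one-line check, which you flag, that $-\pitchfork\theta$ is non-expansive), your argument coincides with the paper's proof.
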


\begin{proof}
$(1)\Rightarrow(2)$
We proceed with two cases.

If $\sup_{j\in J}\phi_j$ is bounded by $b<\infty$, then $\{b\ominus\phi_j\}_{j\in J}$ is a filtered family of upper regular functions of $(X,\delta)$. Appealing to Lemma \ref{sub vs tensor}  we have
\begin{align*}
b\ominus((\sup_{j\in J}\phi_j)\cpt\theta) & = \rho_X(\theta,b\ominus\sup_{j\in J}\phi_j)\\
& = \rho_X(\theta,\inf_{j\in J}(b\ominus\phi_j))\\
& = \inf_{j\in J}\rho_X(\theta,b\ominus\phi_j)\\
& = \inf_{j\in J}b\ominus(\phi_j\cpt\theta)\\
& = b\ominus \sup_{j\in J} (\phi_j\cpt\theta) .
\end{align*}
From the arbitrariness of $b$ it follows that $(\sup_{j\in J}\phi_j)\cpt\theta=\sup_{j\in J} (\phi_j\cpt\theta)$.

If $\sup_{j\in I}\phi_j$ is not bounded, then
\begin{align*}
(\sup_{j\in J}\phi_j)\cpt\theta & = (\sup_{r\in[0,\infty)}\inf\{r_X,\sup_{j\in J}\phi_j\})\cpt\theta\\
& = \sup_{r\in[0,\infty)} (\inf\{r_X,\sup_{j\in J}\phi_j\}\cpt\theta)\\
& = \sup_{r\in[0,\infty)}((\sup_{j\in J}\inf\{r_X,\phi_j\})\cpt\theta)\\
& = \sup_{r\in[0,\infty)}\sup_{j\in J}(\inf\{r_X,\phi_j\} \cpt\theta)\\
& = \sup_{j\in J}\sup_{r\in[0,\infty)} (\inf\{r_X,\phi_j\} \cpt\theta)\\
& = \sup_{j\in J}(\phi_j\cpt\theta).
\end{align*}

$(2)\Rightarrow(1)$ Let  $\{\lambda_i\}_{i\in I}$ be a filtered family of upper regular functions. Without loss of generality, we may suppose that all $\lambda_i$ are bounded by $b<\infty$. Then $\{b\ominus \lambda_i\}_{i\in I}$ is a directed family of lower regular functions. Hence by Lemma \ref{sub vs tensor} we have
\begin{align*}
\rho_X(\theta,\inf_{i\in I}\lambda_i) & = b\ominus ((b\ominus\inf_{i\in I}\lambda_i)\cpt\theta)\\
& =  b\ominus ((\sup_{i\in I}(b\ominus \lambda_i))\cpt\theta)\\
& =  b\ominus  \sup_{i\in I}((b\ominus \lambda_i)\cpt\theta)\\
& = \inf_{i\in I} b\ominus( (b\ominus\lambda_i)\cpt\theta)\\
& = \inf_{i\in I} \rho_X(\theta,\lambda_i),
\end{align*}
which shows that $\theta$ is compact.

$(2)\Leftrightarrow(3)$ This follows  from the dual of Proposition \ref{colimit as join for ideal}. \end{proof}

\begin{prop}\label{compact function of subspace} Let $(X,\delta)$ be an approach space and $A\subseteq X$. Then a function $\theta\colon A\lra[0,\infty]$ is  compact in the subspace $(A,\delta|A)$ if and only if \[\theta^*\colon X\lra[0,\infty],\quad \theta^*(x)=\begin{cases}\theta(x), &x\in A \\ \infty, &x\notin A\end{cases}\] is compact in $(X,\delta)$. \end{prop}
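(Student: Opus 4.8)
The plan is to deduce this from the characterization of compactness in terms of lower regular functions, namely the equivalence $(1)\Leftrightarrow(2)$ of Proposition~\ref{compactness via lower regular}, applied once in $(A,\delta|A)$ and once in $(X,\delta)$. Two facts do the work. First, the lower regular functions of the subspace are precisely the restrictions of those of the ambient space: $\mathfrak{L}(A,\delta|A)=\{\phi|_A\mid\phi\in\mathfrak{L}X\}$. (The inclusion $(A,\delta|A)\to(X,\delta)$ is a contraction, so $\phi|_A\in\mathfrak{L}A$ for every $\phi\in\mathfrak{L}X$; and the right-hand set is already closed under arbitrary suprema, binary infima, and the operations $(-)+r$, $(-)\ominus r$, hence is a lower regular function frame on $A$, so it coincides with the one induced by the subspace structure.) Second, there is the elementary identity
\[\phi\cpt\theta^*=(\phi|_A)\cpt\theta\qquad(\phi\in[0,\infty]^X),\]
valid because the summands with $x\notin A$ in $\inf_{x\in X}(\phi(x)+\theta^*(x))$ equal $\infty$ and so can be dropped.

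For the ``only if'' direction, suppose $\theta$ is compact in $(A,\delta|A)$ and let $\{\phi_j\}_{j\in J}$ be a directed family in $\mathfrak{L}X$. Then $\{\phi_j|_A\}_{j\in J}$ is a directed family in $\mathfrak{L}A$, and since restriction commutes with suprema,
\[(\sup_j\phi_j)\cpt\theta^*=\big((\sup_j\phi_j)|_A\big)\cpt\theta=\big(\sup_j(\phi_j|_A)\big)\cpt\theta=\sup_j\big((\phi_j|_A)\cpt\theta\big)=\sup_j(\phi_j\cpt\theta^*),\]
the middle equality being the compactness of $\theta$ via Proposition~\ref{compactness via lower regular}. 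Hence $\theta^*$ is compact in $(X,\delta)$.

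The ``if'' direction carries the only real difficulty: a directed family $\{\psi_j\}_{j\in J}$ in $\mathfrak{L}A$ need not admit a directed family of lifts in $\mathfrak{L}X$, since the restriction map $\mathfrak{L}X\to\mathfrak{L}A$ is surjective but not injective. To bypass this I would take the canonical largest lift $\bar\psi\coloneqq\sup\{\phi\in\mathfrak{L}X\mid\phi|_A\le\psi\}$; it lies in $\mathfrak{L}X$ by (L1), satisfies $\bar\psi|_A=\psi$ (``$\le$'' is immediate, and ``$\ge$'' holds because $\psi$ is the restriction of some member of $\mathfrak{L}X$, which is therefore one of the functions in the supremum), and is monotone in $\psi$; consequently $\{\bar\psi_j\}_{j\in J}$ is a directed family in $\mathfrak{L}X$ with $(\sup_j\bar\psi_j)|_A=\sup_j\psi_j$. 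Assuming now that $\theta^*$ is compact and applying Proposition~\ref{compactness via lower regular} to it,
\[(\sup_j\psi_j)\cpt\theta=\big((\sup_j\bar\psi_j)|_A\big)\cpt\theta=(\sup_j\bar\psi_j)\cpt\theta^*=\sup_j(\bar\psi_j\cpt\theta^*)=\sup_j\big((\bar\psi_j|_A)\cpt\theta\big)=\sup_j(\psi_j\cpt\theta),\]
so $\theta$ is compact in $(A,\delta|A)$. Thus the crux is the construction and the properties of the canonical lift $\bar\psi$; the rest is the displayed identity together with routine manipulation of suprema, and the degenerate case $A=\varnothing$ is covered uniformly.
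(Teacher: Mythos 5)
Your proof is correct and follows essentially the same route as the paper: the paper's own argument is exactly "Proposition \ref{compactness via lower regular} plus the fact that the lower regular functions of $(A,\delta|A)$ are precisely the restrictions of those of $(X,\delta)$", and you have simply filled in the details (the identity $\phi\cpt\theta^*=(\phi|_A)\cpt\theta$ and, for the harder direction, the canonical monotone lift $\bar\psi$ that keeps the family directed). No gaps; nothing further is needed.
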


\begin{proof}This follows from Proposition \ref{compactness via lower regular} and the fact that   $\lambda\colon A\lra[0,\infty]$ is a lower regular function of $(A,\delta|A)$ if and only if there is some lower regular function $\widetilde{\lambda}$ of $(X,\delta)$ such that $\lambda$ is the restriction of $\widetilde{\lambda}$ on $A$. \end{proof}

The following proposition implies that the family of compact functions is determined by the inhabited ones. We note that for each approach space $(X,\delta)$, the function $\infty_X$  is compact.

\begin{prop}\label{finunion}
Let $(X,\delta)$ be an approach space. %and $\theta\colon X\lra [0,\infty]$ be a function.
\begin{enumerate}[label=\rm(\roman*)] \setlength{\itemsep}{0pt}
  \item If both $\theta_1$ and $\theta_2$  are compact functions of $(X,\delta)$, then so is $\inf\{\theta_1,\theta_2\}$.
  \item If $\theta$ is compact then so is $\theta+r$  for each $r\in[0,\infty]$.
  \item If $\theta$ is compact and $\theta\neq\infty_X$, then for each $r\leq\inf\limits_{x\in X}\theta(x)$, $\theta\ominus r$ is compact.
\end{enumerate}

Hence for each   $\theta\neq\infty_X$,   $\theta$ is compact if and only if   $\theta\ominus\inf_{x\in X}\theta(x)$ is compact.
\end{prop}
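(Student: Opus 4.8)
The plan is to reduce everything to the $\cpt$-characterization of compactness supplied by Proposition~\ref{compactness via lower regular}: a function is compact exactly when $(\sup_{j\in J}\phi_j)\cpt\theta=\sup_{j\in J}(\phi_j\cpt\theta)$ for every directed family $\{\phi_j\}_{j\in J}$ of lower regular functions. First I would record two elementary identities in $[0,\infty]$. For any $\phi\in[0,\infty]^X$ and any $r<\infty$ one has $\phi\cpt(\theta+r)=(\phi\cpt\theta)+r$, which is just $\inf_x(c_x+r)=(\inf_x c_x)+r$; and, \emph{assuming} $r\le\inf_{x\in X}\theta(x)$, one also has $\phi\cpt(\theta\ominus r)=(\phi\cpt\theta)\ominus r$. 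The point of the second is that under this hypothesis $\theta(x)\ominus r=\theta(x)-r$ for every $x$ and $\phi\cpt\theta\ge\inf_{x}\theta(x)\ge r$, so that on all the relevant values the truncated difference $(-)\ominus r$ is ordinary subtraction, and hence still commutes with infima.

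Next I would clear away the degenerate cases: $\theta=\infty_X$ gives $\theta+r=\infty_X$, which is compact, so in (i) one may take $\theta\ne\infty_X$ and $r<\infty$; and when $\theta\ne\infty_X$ the function takes a finite value somewhere, so $s\coloneqq\inf_{x\in X}\theta(x)<\infty$ and every $r\le s$ appearing in (ii) is automatically finite. For (i), given a directed family $\{\phi_j\}_{j\in J}$ of lower regular functions, I would apply the first identity to $\sup_j\phi_j$ and to each $\phi_j$, then compactness of $\theta$, then the fact that $\sup$ commutes with $(-)+r$ in $[0,\infty]$, obtaining the chain $(\sup_j\phi_j)\cpt(\theta+r)=((\sup_j\phi_j)\cpt\theta)+r=(\sup_j(\phi_j\cpt\theta))+r=\sup_j((\phi_j\cpt\theta)+r)=\sup_j(\phi_j\cpt(\theta+r))$, which by Proposition~\ref{compactness via lower regular} says $\theta+r$ is compact. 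For (ii) the very same chain works with the second identity substituted for the first, once one notes that $\phi_j\cpt\theta\ge s\ge r$ for each $j$, hence also $\sup_j(\phi_j\cpt\theta)\ge r$, so that $(-)\ominus r$ is plain subtraction on every quantity involved and therefore commutes with the supremum; this produces $(\sup_j\phi_j)\cpt(\theta\ominus r)=\sup_j(\phi_j\cpt(\theta\ominus r))$, so $\theta\ominus r$ is compact.

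The final equivalence then falls out immediately: for $\theta\ne\infty_X$ with $s=\inf_{x}\theta(x)$, compactness of $\theta$ yields compactness of $\theta\ominus s$ by (ii) with $r=s$, while compactness of $\theta\ominus s$ yields compactness of $(\theta\ominus s)+s$ by (i) with $r=s$, and this last function equals $\theta$ pointwise because $\theta(x)\ge s$ for all $x$. I expect no serious obstacle here; the one thing that needs care—and the reason the hypothesis $r\le\inf_x\theta(x)$ in (ii) cannot be weakened—is that the truncated difference $(-)\ominus r$ does \emph{not} commute with arbitrary infima or suprema, so the whole computation must be arranged so that $(-)\ominus r$ is only ever evaluated on numbers $\ge r$, where it coincides with honest subtraction.
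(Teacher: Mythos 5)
Your proposal is correct and follows essentially the same route as the paper: compactness is tested via the $\cpt$-characterization of Proposition~\ref{compactness via lower regular}, part (i) is exactly the paper's chain of equalities using $\phi\cpt(\theta+r)=(\phi\cpt\theta)+r$, and part (ii) (which the paper dismisses as ``similar'') is handled by the analogous identity for $\ominus r$, with the correct observation that the hypothesis $r\leq\inf_{x\in X}\theta(x)$ (and finiteness of $r$ when $\theta\neq\infty_X$) makes the truncated subtraction behave like ordinary subtraction where needed.
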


\begin{proof}
(i) Use the fact that for any upper regular function $\lambda$ of  $(X,\delta)$, $\rho_X(\inf\{\theta_1,\theta_2\},\lambda)=\max\{\rho_X(\theta_1 ,\lambda), \rho_X(\theta_2,\lambda)\}$.

(ii) For each directed family $\{\phi_i\}_{i\in I}$ of lower regular functions, we have
\begin{align*}
 (\sup_{i\in I}\phi_i)\cpt(\theta+r) & = \inf_{x\in X}(\sup_{i\in I}\phi_i(x)+\theta(x)+r)\\
& = (\sup_{i\in I}\phi_i)\cpt \theta +r \\
& =  \sup_{i\in I}(\phi_i \cpt \theta)+r\\
& = \sup_{i\in I}(\phi_i \cpt(\theta+r)),
\end{align*} hence $\theta+r$ is compact.

(iii) Similar.\end{proof}
 %For each directed family $\{\phi_i\}_{i\in I}$ of lower regular functions, \begin{align*} (\theta\ominus r)\cpt\sup_{i\in I}\phi_i & = \inf_{x\in X}(\theta(x)\ominus r+\sup_{i\in I}\phi_i(x))\\ & = (\theta\cpt\sup_{i\in I}\phi_i)\ominus r\\ & = (\sup_{i\in I}\theta\cpt\phi_i)\ominus r\\ & = \sup_{i\in I}(\theta\ominus r)\cpt\phi_i. \end{align*}

\begin{rem}The conclusions (i) and (ii) in Proposition \ref{finunion}  amount to that  compact functions of an approach space $(X,\delta)$ are closed in $([0,\infty]^X,\rho_X)$ under formation of finite weighted colimits (c.f. Remark \ref{finite weighted limits}), which echoes the fact that a finite union of compact subsets is compact.
\end{rem}

\begin{prop}\label{appbycom}
Let $(X,\delta)$ be an approach space and $\theta\colon X\lra [0,\infty]$ be a function. If for each $r>0$, there exists a compact function $\xi$  such that $\theta\leq \xi \leq \theta+r$, then $\theta$ is   compact.
\end{prop}

\begin{proof}
Let $\{\phi_i\}_{i\in I}$ be a directed family of lower regular functions and let $r>0$. Pick a compact function $\xi$   such that $\theta\leq \xi \leq \theta+r$. Then
\begin{align*}
(\sup_{i\in I}\phi_i)\cpt\theta & \leq (\sup_{i\in I}\phi_i)\cpt\xi\\
& =  \sup_{i\in I}(\phi_i \cpt\xi)\\
& \leq \sup_{i\in I}(\phi_i \cpt(\theta+r))\\
& =  \sup_{i\in I}(\phi_i \cpt\theta)+r,
\end{align*}
 hence $(\sup_{i\in I}\phi_i)\cpt\theta\leq\sup_{i\in I}(\phi_i \cpt\theta)$ by arbitrariness of $r$. It follows that $\theta$ is compact since the converse inequality is obvious.
\end{proof}

\begin{example} Every function $\theta\colon[0,\infty]\lra[0,\infty]$ is a compact function of $\mathbb{P}$. By Proposition \ref{upcom} and Proposition \ref{finunion}, we only need to check that every inhabited and saturated function of $\mathbb{P}$ is compact. For each $a\in[0,\infty]$ and $r\in[0,\infty]$, define a function $\theta_{a,r}\colon[0,\infty]\lra[0,\infty]$ by
$$\theta_{a,r}(x)=\max\{a\ominus x,r\}.$$
Note that if $r\geq a$, then $\theta_{a,r}(x)=r$ for all $x\in[0,\infty]$. For each lower regular function $\phi$ of $\mathbb{P}$, since $\phi$ is a weight of the metric space $([0,\infty],d_R)$, we have
$$\phi\cpt\theta_{a,r}=\phi(a\ominus r)+r.$$
Then, with the help of Proposition \ref{compactness via lower regular}, one   verifies readily  that $\theta_{a,r}$ is compact. Suppose that $\lambda$ is an inhabited and saturated function of $\mathbb{P}$. Then $\lambda$ satisfies the following conditions:
\begin{enumerate}[label=\rm(\roman*)] \setlength{\itemsep}{0pt}
  \item $\lambda$ is decreasing and continuous at $[0,\infty)$;
  \item If $y\leq x$, then $\lambda(y)-\lambda(x)\leq x-y$;
  \item $\lambda(\infty)=0$.
\end{enumerate}
If $\lambda(0)=\infty$, then $\lambda=\theta_{\infty,0}$ by (ii) and (iii), hence compact. If $\lambda(0)\neq\infty$, without loss of generality, we assume that $\lambda(0)=1$. By Proposition \ref{appbycom}, it suffices to show that for each $n\geq 1$, there exists a compact function $\mu$ such that $\lambda\leq\mu\leq\lambda+\frac{1}{n}$. Let $a_0=\infty$ and
$$a_i=\inf\{x\in[0,\infty]\mid \lambda(x)\leq \frac{i}{n}\}$$
for each $1\leq i\leq n$. Then $\lambda\leq \theta_{a_i+\frac{i}{n},\frac{i}{n}}$ and for any $x\in [a_i,a_{i-1}]$, we have
$$ \theta_{a_i+\frac{i}{n},\frac{i}{n}}(x)\leq\lambda(x)+\frac{1}{n},$$
hence
$$\mu\coloneqq\inf\{\theta_{a_i+\frac{i}{n},\frac{i}{n}}\mid 1\leq i\leq n\}$$ satisfies the requirement.
\end{example}

In \cite{Lowen88}, Lowen introduced the notion of \emph{index of compactness} for approach spaces. To see the relationship between compact functions and the compactness-indices of  approach spaces, we introduce the following:

\begin{defn}\label{compact index of func} Let $(X,\delta)$ be an approach space and $\theta\in[0,\infty]^X$. The value \[\chi(\theta)\coloneqq \sup_{I}\Big((\sup_{\phi\in I}\phi)\pitchfork\theta\ominus\sup_{\phi\in I}(\phi\pitchfork\theta)\Big)\] is called the  index of compactness of $\theta$, where $I$ ranges over all ideals of the complete lattice $\mathfrak{L}X$ of lower regular functions of $(X,\delta)$.\end{defn}
We leave it to the reader to check that  \[\chi(\theta)= \sup_{F}\Big(\Big(\inf_{\lambda\in F} \rho_X(\theta,\lambda) \Big)\ominus\rho_X(\theta,\inf_{\lambda\in  F}\lambda)\Big),\]   where $F$ ranges over all filters of the complete lattice $\mathfrak{U}X$ of upper regular functions of $(X,\delta)$.

It is clear that $\theta$ is a compact function if and only if $\chi(\theta)=0$; that is, $\theta$ has $0$ as index of compactness.

\begin{prop}Let $(X,\delta)$ be an approach space.
\begin{enumerate}[label=\rm(\roman*)] \setlength{\itemsep}{0pt}
  \item The index of compactness of the constant function $0_X$ is given by \[\chi(0_X)=   \sup_J\inf_{x\in X}\sup_{\phi\in J}\phi(x) ,\]   where $J$ ranges over all  ideals composed of inhabited lower regular functions.
  \item $\chi(0_X)$ is equal to the index of compactness of $(X,\delta)$  in the sense of Lowen {\rm\cite{Lowen88,Lowen15}}. \item $\chi(0_X)=0$ if and only if $(X,\delta)$ is $0$-compact in the sense of Lowen {\rm\cite{Lowen88,Lowen15}}. \end{enumerate}\end{prop}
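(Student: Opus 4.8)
For part (i), the plan is to unwind Definition \ref{compact index of func} in the case $\theta=0_X$ and then compare the result with the claimed right‑hand side. Since $\phi\pitchfork 0_X=\inf_{x\in X}\phi(x)$ for every $\phi\in[0,\infty]^X$, Definition \ref{compact index of func} becomes
\[
\chi(0_X)=\sup_{I}\Big(\inf_{x\in X}\sup_{\phi\in I}\phi(x)\ \ominus\ \sup_{\phi\in I}\inf_{x\in X}\phi(x)\Big),
\]
with $I$ ranging over the ideals of $\mathfrak{L}X$ (taken with the usual pointwise order, which is the sense in which directed families of lower regular functions occur in Proposition \ref{compactness via lower regular}). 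The inequality $\chi(0_X)\ge\sup_{J}\inf_{x}\sup_{\phi\in J}\phi(x)$ is easy: if $J$ is an ideal composed of inhabited lower regular functions then $\inf_{x}\phi(x)=0$ for every $\phi\in J$, so $\sup_{\phi\in J}\inf_x\phi(x)=0$ and the $J$‑term of the supremum above is exactly $\inf_{x}\sup_{\phi\in J}\phi(x)$. For the reverse inequality the key idea is that an arbitrary ideal $I$ of $\mathfrak{L}X$ can be \emph{uniformly shifted downward} so as to consist of inhabited functions without disturbing the relevant gap: put $s=\sup_{\phi\in I}\inf_{x}\phi(x)$ and let $J$ be the down‑closure in $\mathfrak{L}X$ of $\{\phi\ominus s\mid\phi\in I\}$. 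Then (L3) keeps $J$ inside $\mathfrak{L}X$; since $\inf_{x}(\phi(x)\ominus s)=(\inf_{x}\phi(x))\ominus s=0$ for each $\phi\in I$ (because $\inf_x\phi(x)\le s$), every member of $J$ is inhabited; and because $I$ is directed and $(\phi_1\ominus s)\vee(\phi_2\ominus s)=(\phi_1\vee\phi_2)\ominus s$, the family $J$ is again an ideal. Finally, as $t\mapsto t\ominus s$ commutes with arbitrary suprema and infima in $[0,\infty]$,
\[
\inf_{x}\sup_{\phi\in J}\phi(x)=\inf_{x}\Big(\big(\sup_{\phi\in I}\phi(x)\big)\ominus s\Big)=\Big(\inf_{x}\sup_{\phi\in I}\phi(x)\Big)\ominus s,
\]
which is precisely the $I$‑term. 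Passing to suprema gives $\chi(0_X)\le\sup_{J}\inf_{x}\sup_{\phi\in J}\phi(x)$, hence equality, and (i) is done.

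For part (ii), I would use the description of Lowen's index of compactness as $\chi_c(X)=\sup_{\mathcal U}\inf_{x\in X}\sup_{U\in\mathcal U}\delta(x,U)$, the supremum over all ultrafilters $\mathcal U$ on $X$ (see \cite{Lowen88,Lowen15}), and compare it with the formula of part (i). For $\chi_c(X)\le\chi(0_X)$: for an ultrafilter $\mathcal U$ each $\delta(-,U)$ with $U\in\mathcal U$ is a lower regular function — this is exactly condition (A4) — and it is inhabited by (A1) since $U\neq\varnothing$; moreover $\delta(-,U_1\cap U_2)\ge\max\{\delta(-,U_1),\delta(-,U_2)\}$, so the down‑closure $J_{\mathcal U}$ of $\{\delta(-,U)\mid U\in\mathcal U\}$ is an ideal composed of inhabited lower regular functions with $\sup_{\phi\in J_{\mathcal U}}\phi=\sup_{U\in\mathcal U}\delta(-,U)$, and (i) applies. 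For $\chi(0_X)\le\chi_c(X)$: given an ideal $J$ composed of inhabited lower regular functions, the sets $A_{\phi,\varepsilon}=\{x\mid\phi(x)\le\varepsilon\}$ ($\phi\in J$, $\varepsilon>0$) are nonempty and, by directedness of $J$, form a filter base; extend it to an ultrafilter $\mathcal U$. Since $\phi\ominus\varepsilon\in\mathfrak{L}X$ vanishes on $A_{\phi,\varepsilon}$, the recovery of $\delta$ from $\mathfrak{L}X$ (recall $\delta(x,A)=\sup\{\psi(x)\mid\psi\in\mathfrak{L}X,\ \psi|_A=0\}$) gives $\sup_{U\in\mathcal U}\delta(x,U)\ge\delta(x,A_{\phi,\varepsilon})\ge\phi(x)\ominus\varepsilon$ for all $\phi\in J$ and $\varepsilon>0$, whence $\sup_{U\in\mathcal U}\delta(x,U)\ge\sup_{\phi\in J}\phi(x)$ for every $x$; taking $\inf_x$ and then $\sup_J$ and invoking (i) finishes. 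Thus $\chi(0_X)=\chi_c(X)$.

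Part (iii) is then immediate from (ii), since $(X,\delta)$ is $0$‑compact in Lowen's sense precisely when $\chi_c(X)=0$, i.e. when $\chi(0_X)=0$. The step I expect to be the main obstacle is the second half of (i): the naive move — deleting the non‑inhabited functions of $I$, or intersecting $I$ with the inhabited ones — destroys directedness and gives the wrong value, so one has to find the correct normalization (the single downward shift by $s=\sup_{\phi\in I}\inf_x\phi(x)$) and then verify with some care both that the shifted, down‑closed family is genuinely an ideal and that the shift leaves $\inf_x\sup_{\phi\in(\cdot)}\phi(x)$ unchanged. In (ii) the only delicate point is invoking the correct form of Lowen's index together with the reconstruction identity for $\delta$; the remainder is routine bookkeeping between ultrafilters and ideals of inhabited lower regular functions.
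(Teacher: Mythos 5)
Your argument is correct, and for part (i) it is essentially the paper's own proof: the paper also reduces to the case $\sup_{\phi\in I}\inf_x\phi(x)=b<\infty$, shifts the ideal to $\{\phi\ominus b\}_{\phi\in I}$, takes the ideal it generates, and computes $\inf_x\sup_{\phi\in I}(\phi(x)\ominus b)=\big(\inf_x\sup_{\phi\in I}\phi(x)\big)\ominus b$ -- exactly your normalization by $s$. The only cosmetic difference is that the paper disposes of the case $s=\infty$ separately (any inhabited ideal works there, since the $I$-term is then $0$), whereas you run the shift argument uniformly; with the quantale convention $\infty\ominus\infty=0$ your uniform treatment is fine, but it is worth making that case explicit. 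Where you genuinely diverge is part (ii): the paper simply cites Theorem 4.3.9 of Lowen's \emph{Index Analysis}, which already identifies the index of compactness with the supremum, over directed families of inhabited lower regular functions, of $\inf_x\sup_\phi\phi(x)$, so for the paper (ii) is an immediate consequence of (i). You instead re-derive this equivalence from Lowen's ultrafilter formula $\chi_c(X)=\sup_{\mathcal U}\inf_x\sup_{U\in\mathcal U}\delta(x,U)$, passing from an ultrafilter to the ideal generated by the inhabited lower regular functions $\delta(-,U)$ (correctly observing that (A4) is precisely lower regularity of $\delta(-,A)$), and conversely from an ideal $J$ to an ultrafilter refining the sets $\{\phi\le\varepsilon\}$, using the reconstruction $\delta(x,A)=\sup\{\psi(x)\mid\psi\in\mathfrak L X,\ \psi|_A=0\}$. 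Both directions check out, so your route buys a self-contained proof of the bridge to Lowen's index at the cost of importing (and having to recall correctly) the ultrafilter/limit-operator definition of $\chi_c$, while the paper outsources exactly this bridge to the cited theorem; part (iii) is handled identically in both.
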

\begin{proof}(i) By definition, \[\chi(0_X)= \sup_{I}\Big(\inf_{x\in X}\sup_{\phi\in I}\phi(x)\ominus\sup_{\phi\in I}\inf_{x\in X}\phi(x)\Big),\]where $I$ ranges over all ideals of the complete lattice $\mathfrak{L}X$. Since $\sup_{\phi\in J}\inf_{x\in X}\phi(x)=0$ for each ideal $J$ composed of inhabited lower regular functions, it suffices to check that for each ideal $I$ of $\mathfrak{L}X$, there is an ideal $J$   composed of inhabited lower regular functions such that \[\inf_{x\in X}\sup_{\phi\in I}\phi(x)\ominus\sup_{\phi\in I}\inf_{x\in X}\phi(x)\leq  \inf_{x\in X}\sup_{\psi\in J}\psi(x).\]

If $\sup_{\phi\in I}\inf_{x\in X}\phi(x)=\infty$,  any ideal composed of inhabited lower regular functions meets the requirement.  If $\sup_{\phi\in I}\inf_{x\in X}\phi(x)=b<\infty$, then  $\{\phi\ominus b\}_{\phi\in I}$ is a directed family consisting of inhabited lower regular functions. Let $J$ be the ideal generated by this family. Then $J$ is composed of inhabited lower regular functions and
$$\inf_{x\in X}\sup_{\psi\in J}\psi(x)= \inf_{x\in X}\sup_{\phi\in I}(\phi(x)\ominus b)= \inf_{x\in X}\sup_{\phi\in I}\phi(x)\ominus b,$$
which shows that $J$ satisfies the requirement.

(ii) Follows from (i) and \cite[Theorem 4.3.9]{Lowen15}.

(iii) A special case of (ii). \end{proof}

The above proposition shows that Definition \ref{compact index of func} provides an extension of the notion of index of compactness for approach spaces in the sense of Lowen; and it also leads to the following:
\begin{defn}
An approach space $(X,\delta)$ is compact if its index of compactness is $0$; or equivalently,    $0_X$ is a compact function. \end{defn}

Compact approach spaces are just the $0$-compact ones in the sense of Lowen \cite{Lowen15}.
It follows from Lemma \ref{finunion} that an approach space $(X,\delta)$ is compact if and only if for all $r\in[0,\infty]$, the constant function $r_X$ is  compact.

In the following we show that   compact functions in approach spaces are an  \emph{extension}  of   compact subsets in topological spaces and totally bounded subsets in metric spaces.

For each topological space $X$, the map $$\delta_X\colon X\times 2^X\lra [0,\infty],\quad
\delta_X(x,A)=
\begin{cases}
0, & \text{if $x$ is in the closure of $A$},\\
\infty, & \text{otherwise}
\end{cases}
$$
is  an approach structure on the set $X$. The correspondence $X \mapsto \omega(X)\coloneqq(X,\delta_X)$ defines a full and faithful functor
$$\omega\colon \sf Top\lra \sf App.$$
Approach spaces of the form $\omega(X)$ are said to be \emph{topologically generated} \cite{RL97,Lowen15}. It is clear that a function $\phi\colon X\lra [0,\infty]$ is a lower regular function of $\omega(X)$ if and only if $\phi$ is lower semicontinuous; and that  $\lambda\colon X\lra [0,\infty]$ is an upper regular function of $\omega(X)$ if and only if either $\lambda=\infty_X$ or $\lambda$ is bounded and upper semicontinuous.

For a topological space $X$, it is shown in \cite[Proposition 4.3.11]{Lowen15} that the approach space $\omega(X)$ is compact if and only if the topological space $X$  is compact. This fact together with Proposition \ref{compact function of subspace} yields the following proposition, which shows that the notion of compact functions of approach spaces extends that of  compact subsets of topological spaces.

\begin{prop}
Let $X$ be a topological space and $A$ a subset of $X$. Then $0_A$ is a compact function of $\omega(X)$ if and only if $A$ is a compact subset of $X$. \end{prop}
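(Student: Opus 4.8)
The plan is to connect the statement $0_A$ is compact in $\omega(X)$ to the already-established fact (\cite[Proposition 4.3.11]{Lowen15}) that $\omega(Y)$ is compact iff the topological space $Y$ is compact, using Proposition \ref{compact function of subspace} as the bridge. First I would observe that, by Proposition \ref{compact function of subspace} applied to the constant function $0$ on $A$, the function $0_A = (0_A)^*$ (the extension of $0_A$ by $\infty$ outside $A$) is compact in $\omega(X)$ if and only if the constant function $0$ on the subspace is compact in the approach subspace $(\omega(X))|A = \omega(X)|A$. So the whole claim reduces to showing that the approach subspace of $\omega(X)$ on the underlying set $A$ is exactly $\omega(A_{\rm sub})$, where $A_{\rm sub}$ denotes $A$ with the subspace topology inherited from $X$.

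The second step is precisely that compatibility: the functor $\omega\colon \sf Top\lra \sf App$ commutes with the formation of subspaces, i.e. $\omega(X)|A = \omega(A)$ for every subset $A\subseteq X$ (where on the right $A$ carries the subspace topology). This is standard and follows from the explicit description of $\delta_X$: for $a\in A$ and $B\subseteq A$, the distance $\delta_X(a,B)$ is $0$ precisely when $a$ lies in the closure of $B$ in $X$, which for $B\subseteq A$ and $a\in A$ is equivalent to $a$ lying in the closure of $B$ in the subspace $A$; and the only other possible value is $\infty$. Hence the restricted distance on $A$ is exactly $\delta_A$ of the subspace topology. (Alternatively one may phrase this via lower regular functions: a lower regular function of $\omega(X)|A$ is, by Proposition \ref{compact function of subspace}'s proof, a restriction of a lower semicontinuous function on $X$, and these are exactly the lower semicontinuous functions on $A_{\rm sub}$.)

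The third and final step is to invoke \cite[Proposition 4.3.11]{Lowen15}: the approach space $\omega(A)$ is compact if and only if the topological space $A$ (with the subspace topology) is compact, i.e. $A$ is a compact subset of $X$. Combining, $0_A$ is a compact function of $\omega(X)$ $\iff$ the constant function $0$ is compact in $\omega(X)|A = \omega(A)$ $\iff$ $\omega(A)$ is a compact approach space $\iff$ $A$ is a compact subset of $X$, which is the assertion.

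I do not expect any genuine obstacle here; the only point requiring a little care is verifying $\omega(X)|A = \omega(A)$, that is, that $\omega$ preserves subspaces, and that the notion ``compact function'' used in Proposition \ref{compact function of subspace} is being applied to the constant function $0$ rather than to a general function. Both are routine once one unwinds the definitions, so the proof is essentially a three-line chain of equivalences citing Proposition \ref{compact function of subspace} and \cite[Proposition 4.3.11]{Lowen15}.
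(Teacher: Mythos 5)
Your argument is exactly the paper's: the proposition is obtained by combining Proposition \ref{compact function of subspace} (applied to the constant function $0$ on $A$, whose extension by $\infty$ is $0_A$) with Lowen's result that $\omega(Y)$ is a compact approach space iff $Y$ is a compact topological space, the compatibility $\omega(X)|A=\omega(A)$ being the routine implicit step. Your write-up just makes that subspace-compatibility check explicit, which is fine.
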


%\textbf{Necessity}. Suppose $\{F_i\}_{i\in I}$ is a filtered family of closed sets in $X$, and $F_i\cap A\neq\varnothing$ for all $i\in I$. Then $\{0_{F_{i}}\}_{i\in I}$ is a directed family of lower semicontinuous functions from $X$ to $[0,\infty]$ and for each $i\in I$, $ 0_{F_i}\cpt0_A=0$. Since $0_A$ is a compact function of $\omega(X)$, it follows that $ (\sup_{i\in I}0_{F_i})\cpt0_A=0$, hence $ (\bigcap_{i\in I}F_i)\cap A\neq\varnothing$ and $A$ is compact.

%\textbf{Sufficiency}. It suffices to show that for each directed family $\{\phi_i\}_{i\in I}$  of lower regular functions of $\omega(X)$, $$\inf_{x\in A}\sup_{i\in I}\phi_i(x)\leq\sup_{i\in I}\inf_{x\in A}\phi_i(x).$$ Suppose   $\sup_{i\in I}\inf_{x\in A}\phi_i(x)<r$. Then for each $i\in I$, $$\phi_i^{-1}[0,r]\cap A\neq\varnothing.$$ Since $\{\phi_i\}_{i\in I}$ is a directed family of lower semicontinuous functions, $\{\phi_i^{-1}[0,r]\}_{i\in I}$ is a filtered family of closed sets in $X$, hence $$(\bigcap_{i\in I}\phi_i^{-1}([0,r]))\cap A\neq\varnothing.$$ Therefore, $\inf_{x\in A}\sup_{i\in I}\phi_i(x)\leq r$, and the conclusion follows from the arbitrariness of $r$.

\begin{prop}
Let $X$ be a topological space and let $\theta\colon X\lra[0,\infty]$ be a lower semicontinuous function. Then $\theta$ is a compact function of $\omega(X)$ if and only if $\theta^{-1}([0,r])$ is a compact subset of $X$ for each $r\in[0,\infty)$.
\end{prop}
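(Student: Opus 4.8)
The plan is to run both implications through the characterization of compact functions in Proposition \ref{compactness via lower regular}(2). Since the lower regular functions of $\omega(X)$ are precisely the lower semicontinuous functions $X\lra[0,\infty]$, that proposition says $\theta$ is compact if and only if $(\sup_{j\in J}\phi_j)\cpt\theta=\sup_{j\in J}(\phi_j\cpt\theta)$ for every directed family $\{\phi_j\}_{j\in J}$ of lower semicontinuous functions, where $\phi\cpt\theta=\inf_{x\in X}(\phi(x)+\theta(x))$. The inequality $\sup_j(\phi_j\cpt\theta)\leq(\sup_j\phi_j)\cpt\theta$ is automatic, so in each direction only the reverse inequality is at stake. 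I would also record at the outset that every set $\theta^{-1}([0,r])=\{x\mid\theta(x)\leq r\}$ is closed, because $\theta$ is lower semicontinuous; in particular ``compact subset'' is unambiguous here.

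For necessity I would argue by contraposition. Suppose $A:=\theta^{-1}([0,r])$ is not compact for some $r<\infty$, and pick an open cover $\{U_k\}_{k\in K}$ of $A$ with no finite subcover. Using that $X\setminus A$ is open, enlarge it to the open cover $\{U_k\}_{k\in K}\cup\{X\setminus A\}$ of $X$, which still admits no finite subcover. For each finite $F\subseteq K$ put $V_F=(X\setminus A)\cup\bigcup_{k\in F}U_k$; then $\{V_F\}_F$ is directed under inclusion, $V_F\neq X$ for every $F$, and $\bigcup_F V_F=X$. Now I would test compactness of $\theta$ against the directed family of lower semicontinuous functions $\psi_F$ given by $\psi_F(x)=\infty$ for $x\in V_F$ and $\psi_F(x)=0$ otherwise (its superlevel sets are $V_F$ or $\varnothing$): since $\sup_F\psi_F=\infty_X$ we get $(\sup_F\psi_F)\cpt\theta=\infty$, while $X\setminus V_F$ is a nonempty subset of $A$, so $\psi_F\cpt\theta=\inf_{x\notin V_F}\theta(x)\leq r$ and hence $\sup_F(\psi_F\cpt\theta)\leq r<\infty$. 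This violates the identity of Proposition \ref{compactness via lower regular}(2), so $\theta$ is not compact.

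For sufficiency, assume every $A_r:=\theta^{-1}([0,r])$ with $r<\infty$ is compact, let $\{\phi_j\}_{j\in J}$ be a directed family of lower semicontinuous functions, and set $\phi=\sup_j\phi_j$ and $s=\sup_j(\phi_j\cpt\theta)$. If $s=\infty$ there is nothing to prove, so assume $s<\infty$ and fix $t$ with $s<t<\infty$; I would show $\phi\cpt\theta\leq t$ and then let $t\downarrow s$. Assuming instead that $\phi(x)+\theta(x)>t$ for all $x$, I would consider $O_j=\{x\mid\phi_j(x)+\theta(x)>t\}$: each $O_j$ is open because $\phi_j+\theta$ is lower semicontinuous, $\{O_j\}_j$ is directed (as $\{\phi_j\}_j$ is), and $\{O_j\}_j$ covers $X$ since $\phi=\sup_j\phi_j$. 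The key point is that $X\setminus A_t\subseteq O_j$ for every $j$, because $\theta(x)>t$ already forces $\phi_j(x)+\theta(x)>t$ regardless of $\phi_j$; hence $\{O_j\}_j$ covers the compact set $A_t$, and a finite subcover together with an upper bound for finitely many of the $O_j$ gives an index $j^*$ with $A_t\subseteq O_{j^*}$, so $O_{j^*}=X$ and $\phi_{j^*}\cpt\theta\geq t>s$, a contradiction. Thus $\phi\cpt\theta\leq s$, and $\theta$ is compact.

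I expect the real obstacle to lie in the necessity direction: a cover of $A$ on its own fails to give a contradiction, since when the $V_F$ only exhaust $A$ the quantity $\inf_{x\notin V_F}\theta(x)$ may be as large as $r$ and the supremum over $F$ need not drop below $(\sup_F\psi_F)\cpt\theta$. Enlarging to a cover of the whole space — legitimate precisely because $\theta$ is lower semicontinuous, hence $A$ closed — and using $\{0,\infty\}$-valued test functions is what pushes $(\sup_F\psi_F)\cpt\theta$ up to $\infty$ and creates a genuine gap. In the sufficiency direction the matching observation, that each $O_j$ automatically contains the complement of the sublevel set $A_t$, is what lets compactness of $A_t$ substitute for the unavailable compactness of $X$.
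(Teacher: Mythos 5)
Your proof is correct and is essentially the paper's argument written in complementary language: your $\{0,\infty\}$-valued test functions $\psi_F=0_{X\setminus V_F}$ and the open superlevel sets $O_j=\{x\mid \phi_j(x)+\theta(x)>t\}$ are exactly the complements of the filtered families of non-empty closed sets $F_i\subseteq\theta^{-1}[0,r]$ and sublevel sets $(\phi_i+\theta)^{-1}[0,r]$ that the paper feeds into Proposition \ref{compactness via lower regular}(2), so your cover-based/contrapositive formulation of compactness is the paper's finite-intersection formulation read through complementation. No gaps; both directions check out.
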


\begin{proof}
We check the necessity first. Let $A=\theta^{-1}[0,r]$. Since $\theta$ is lower semicontinuous, $A$ is a closed set in X. Let $\{F_i\}_{i\in I}$ be a filtered family of non-empty closed sets of $A$. Since $A$ is   closed, it follows that $\{0_{F_i}\}_{i\in I}$ is a directed family of lower regular functions of $\omega(X)$, hence $$(\sup_{i\in I}0_{F_{i}})\cpt \theta=\sup_{i\in I} (0_{F_i}\cpt \theta) \leq r,$$ which shows that $\{F_i\}_{i\in I}$ has a nonempty intersection and consequently, $A$ is compact.

As for sufficiency, it suffices to prove that, for each directed family  of lower semicontinuous functions  $\{\phi_i\colon X\lra [0,\infty]\}_{i\in I}$,
$$ (\sup_{i\in I}\phi_i)\cpt \theta\leq\sup_{i\in I}(\phi_i\cpt \theta).$$
If $\sup_{i\in I}(\phi_i\cpt\theta)<r<\infty$, then for each $i\in I$,
$$\inf_{x\in X}(\phi_i(x)+\theta(x))<r.$$
Let $F_i=(\phi_i+\theta)^{-1}[0,r]$ for each $i\in I$. By lower semicontinuity of $\theta$ and $\phi_i$, one sees that  $\{F_i\}_{i\in I}$ is a filtered family of non-empty closed sets contained in the closed set $A\coloneqq \theta^{-1}[0,r]$. Since $A$ is compact, then
$$(\sup_{i\in I}\phi_i)\cpt \theta=\inf_{x\in X}(\sup_{i\in I}\phi_i(x)+\theta(x))\leq r$$
and consequently,
$$(\sup_{i\in I}\phi_i)\cpt \theta\leq\sup_{i\in I}(\phi_i\cpt \theta)$$
as desired.
\end{proof}
\begin{cor}
 If $X$ is a compact topological space, then every lower semicontinuous function from  $X$ to $[0,\infty]$ is compact in $\omega(X)$.
\end{cor}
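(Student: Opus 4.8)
The plan is to deduce this directly from the preceding proposition, which characterizes the compact functions of $\omega(X)$ among the lower semicontinuous functions as exactly those whose sublevel sets $\theta^{-1}([0,r])$ are compact for all finite $r$.

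First I would recall that a map $\theta\colon X\lra[0,\infty]$ is lower semicontinuous precisely when each set $\theta^{-1}((r,\infty])$ is open in $X$, equivalently each sublevel set $\theta^{-1}([0,r])$ is closed in $X$; here $r$ ranges over $[0,\infty)$ (and the case $r=\infty$ gives all of $X$, which is irrelevant for the characterization). This is just the definition of lower semicontinuity unwound, so it requires no real argument.

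Next, given that $X$ is compact, every closed subset of $X$ is compact. Applying this to $\theta^{-1}([0,r])$ for each $r\in[0,\infty)$ shows that all the finite sublevel sets of $\theta$ are compact subsets of $X$. By the preceding proposition, this is exactly the condition for $\theta$ to be a compact function of $\omega(X)$, which is what we want.

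There is essentially no obstacle here: the statement is a formal consequence of the previous proposition together with the elementary topological fact that closed subspaces of compact spaces are compact. The only point worth a sentence of care is the identification of lower semicontinuity with closedness of sublevel sets, so that the hypothesis of the corollary feeds correctly into the hypothesis of the proposition; once that is observed the result is immediate.
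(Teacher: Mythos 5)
Your argument is correct and is exactly the intended derivation: lower semicontinuity makes each sublevel set $\theta^{-1}([0,r])$ closed, compactness of $X$ then makes it compact, and the preceding proposition converts this into compactness of $\theta$ in $\omega(X)$. This matches the paper's (implicit) proof, so there is nothing to add.
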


Given a  metric space $(X,d)$, the map  $$\Gamma(d)\colon X\times 2^X\lra[0,\infty],\quad
\Gamma(d)(x,A)=
\begin{cases}
\infty, & A=\varnothing,\\
\inf\limits_{y\in A}d(x,y), & A\neq\varnothing
\end{cases}
$$ is an approach structure on the set $X$. Actually, the correspondence $(X,d)\mapsto (X,\Gamma(d))$ defines a functor
$$\Gamma: \sf Met\lra \sf App,$$
which is left adjoint to $\Omega: \sf App\lra \sf Met$.
Spaces of the form $(X,\Gamma(d))$ are called \emph{metric approach spaces} \cite{RL97,Lowen15}.

It is not hard to check that $\phi\colon X\lra [0,\infty]$ is a lower regular function of the metric approach space $(X,\Gamma(d))$ if and only if $\phi$ is a weight of $(X,d)$;  and  $\lambda\colon X\lra [0,\infty]$ is an upper regular function of $(X,\Gamma(d))$ if and only if either $\lambda=\infty_X$ or $\lambda$ is a bounded coweight of $(X,d)$.
Then, by the dual statement of the Yoneda lemma (Lemma \ref{yoneda lemma}) one sees that for each $x\in X$,   $d(x,-)\colon X\lra[0,\infty]$ is a compact function of $(X,\Gamma(d))$. So, by Proposition \ref{finunion} (i), for each finite subset $F$ of   $X$, $$d(F,-)\coloneqq\inf_{a\in F}d(a,-)$$ is a compact function of $(X,\Gamma(d))$.

A metric space $(X,d)$ is \emph{precompact} if for each $r>0$, there exists a finite subset $A$ of $X$, such that
$X=\bigcup_{x\in A}B(x,r),$
where $B(x,r)=\{y\in X\mid d(x,y)<r\}$. A subset $K$ is called a precompact subset   if $(K,d)$ is a precompact metric space.

\begin{prop}\label{totally bounded subset}
Let $(X,d)$ be a metric space and $K$ a subset of $X$. The following are equivalent: \begin{enumerate}[label=\rm(\arabic*)] \setlength{\itemsep}{0pt}
  \item $K$ is a precompact subset of $(X,d)$.
  \item  $d(K,-)$ is a compact function of $(X,\Gamma(d))$.
  \item  $0_K$ is a compact function of $(X,\Gamma(d))$.
\end{enumerate}
\end{prop}

\begin{proof}
%First, it is shown in \cite[Theorem 4.4]{Lowen88} that for a metric space $(X,d)$, the approach space $(X,\Gamma(d))$ is compact if and only if $(X,d)$ is precompact. The equivalence $(1)\Leftrightarrow(3)$ follows immediately from this fact and Proposition \ref{compact function of subspace}. Here we present a proof by help of  Proposition \ref{appbycom}.

$(1) \Rightarrow  (2)$ Since $K$ is a precompact subset of $X$, for each $r>0$  there exists a finite subset $F$ of $K$ such that, $$K\subseteq\bigcup_{x\in F}B(x,r).$$
It follows that
$$d(K,y)\leq d(F,y)<d(K,y)+r$$
for each $y\in X$. Since $F$ is finite, $d(F,-)$ is a compact function, so, $d(K,-)$ is compact  by Proposition \ref{appbycom}.

$(2)  \Rightarrow  (3)$ This follows from   that for each weight $\phi$ of $(X,d)$,
\begin{align*}
\phi\cpt d(K,-)  & =\inf_{x\in X}(\phi(x)+d(K,x))\\
& = \inf_{x\in K}\phi(x)\\
& =  \phi\cpt0_K.
\end{align*}

$(3) \Rightarrow  (1)$  Suppose on the contrary that $K$ is not precompact. Then there exists $r>0$ such that $K\nsubseteq\bigcup_{x\in F}B(x,r)$ for any finite subset $F$ of $K$. For each $x\in K$, let $$\phi_x=d(-,X\backslash B(x,r)).$$ Since $\phi_x(x)\geq r$ for all $x\in K$, it follows that
$$ (\sup_{x\in K}\phi_x)\cpt0_K=\inf_{y\in K}\sup_{x\in K}\phi_x(y)\geq r.$$

For each finite subset $F$ of $K$, there is some $z\in K$ such that $d(x,z)\geq r$ for all $x\in F$. Thus,
$$(\sup_{x\in F}\phi_x)\cpt0_K=\inf_{y\in K}\sup_{x\in F}\phi_x(y)\leq\sup_{x\in F}\phi_x(z)=0,$$
which contradicts the compactness of $0_K$.
\end{proof}

\section{A Hofmann-Mislove theorem}

\begin{defn}
Let $(X,\delta)$ be an approach space and let $\Lambda$ be  a $[0,\infty]$-filter  of the metric space $(\mathfrak{U}X,\rho_X)$   of upper regular functions of $(X,\delta)$. We say that   $\Lambda$ is
\begin{enumerate}[label=\rm(\roman*)] \setlength{\itemsep}{0pt}
  \item proper if $\Lambda(r_X)=r$ for each $r\in[0,\infty]$;
  \item decent if $\Lambda(\lambda)\neq 0$ for some   $\lambda\neq\infty_X$.
\end{enumerate}
\end{defn}

For each approach space $(X,\delta)$ with $X$ being non-empty, the metric space $(\mathfrak{U}X,\rho_X)$ has exactly two  $[0,\infty]$-filters that are not decent, both of them are open. The first is
$\rho_X(\infty_X,-)$ and the second is given by
\begin{equation*}
\Lambda(\lambda)=
\begin{cases}
0, & \lambda\neq\infty_X,\\
\infty, & \lambda=\infty_X.
\end{cases}
\end{equation*}

The following proposition is obvious.
\begin{prop}
Let $\theta$ be a compact saturated function of an approach space $(X,\delta)$. \begin{enumerate}[label=\rm(\roman*)] \setlength{\itemsep}{0pt} \item If $\theta$ is inhabited  then $\rho_X(\theta,-)$ is a  proper open $[0,\infty]$-filter of $(\mathfrak{U}X,\rho_X)$.
\item If  $\theta$  is non-vacuous  then $\rho_X(\theta,-)$ is a  decent open $[0,\infty]$-filter of $(\mathfrak{U}X,\rho_X)$.
\end{enumerate}
\end{prop}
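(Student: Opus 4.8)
The plan is to observe that nearly everything asserted is already built into the definitions, so that only the adjectives ``proper'' and ``decent'' need to be checked. By Definition~\ref{cf}, the hypothesis that $\theta$ is compact says precisely that $\rho_X(\theta,-)$ is an \emph{open} $[0,\infty]$-filter of $(\mathfrak{U}X,\rho_X)$ (and $\rho_X(\theta,-)$ is always a $[0,\infty]$-filter by Proposition~\ref{real-valued filter}). Hence for (i) it remains only to verify $\rho_X(\theta,r_X)=r$ for all $r\in[0,\infty]$, and for (ii) only to produce a single $\lambda\in\mathfrak{U}X$ with $\lambda\neq\infty_X$ and $\rho_X(\theta,\lambda)\neq 0$.

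For (i) I would first note that $r_X\in\mathfrak{U}X$ for every $r\in[0,\infty]$: for $r<\infty$ because $\sup_{x\in X}r_X(x)=r<\infty$ and $(\sup_{x\in X}r_X(x))\ominus r_X=0_X\in\mathfrak{L}X$ (the last membership holding since $0_X$ is the supremum of the empty family of lower regular functions, cf.\ (L1)), while $\infty_X\in\mathfrak{U}X$ by (U1). Since $t\mapsto r\ominus t$ is order-reversing on $[0,\infty]$, one computes $\rho_X(\theta,r_X)=\sup_{x\in X}\big(r\ominus\theta(x)\big)=r\ominus\inf_{x\in X}\theta(x)$. As $\theta$ is inhabited, $\inf_{x\in X}\theta(x)=0$ (so in particular $X\neq\varnothing$ and $\theta\neq\infty_X$), whence $\rho_X(\theta,r_X)=r$ for every $r\in[0,\infty]$, including $r=\infty$. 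Thus $\rho_X(\theta,-)$ is proper.

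For (ii), since $\theta$ is non-vacuous we have $\theta\neq\infty_X$, so there is $x_0\in X$ with $\theta(x_0)<\infty$; fix any $r$ with $\theta(x_0)<r<\infty$ and put $\lambda\coloneqq r\ominus\delta(-,\{x_0\})$. Then $\sup_{x\in X}\lambda(x)=r<\infty$ and $(\sup_{x\in X}\lambda(x))\ominus\lambda=\inf\{r_X,\delta(-,\{x_0\})\}\in\mathfrak{L}X$ by (L2), so $\lambda\in\mathfrak{U}X$ (it is also an upper regular function by the computation in the proof that saturated functions equal their saturation), and $\lambda$ is bounded, so $\lambda\neq\infty_X$. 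Using $\delta(x_0,\{x_0\})=0$ we obtain $\rho_X(\theta,\lambda)\geq\lambda(x_0)\ominus\theta(x_0)=r\ominus\theta(x_0)=r-\theta(x_0)>0$, so $\rho_X(\theta,-)$ is decent (note that saturation of $\theta$ is not actually used here). I do not expect any genuine obstacle: the statement is essentially definitional once one recognises that compactness of $\theta$ already supplies the ``open $[0,\infty]$-filter'' conclusion; the only point deserving a little care is the bookkeeping of the degenerate cases $X=\varnothing$ and $\theta=\infty_X$, which are exactly what the hypotheses ``inhabited'' and ``non-vacuous'' exclude.
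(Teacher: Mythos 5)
Your proof is correct: the paper gives no argument here (it labels the proposition ``obvious''), and your verification is exactly the intended one — compactness of $\theta$ is by Definition~\ref{cf} literally the statement that $\rho_X(\theta,-)$ is an open $[0,\infty]$-filter, so only properness ($\rho_X(\theta,r_X)=r\ominus\inf_x\theta(x)=r$ when $\theta$ is inhabited) and decency (via $\lambda=r\ominus\delta(-,\{x_0\})$ with $\theta(x_0)<r<\infty$) need checking. Your side remarks (saturation is not used, and the hypotheses rule out the degenerate cases) are also accurate.
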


%\begin{proof} Routine. \end{proof}

Our aim is to show that for a sober approach space $(X,\delta)$, every proper open $[0,\infty]$-filter of $(\mathfrak{U}X,\rho_X)$ is of the form $\rho_X(\theta,-)$ for a unique inhabited and saturated compact function.

A lower regular function $\phi$ of an approach space  $(X,\delta)$ is said to be \emph{irreducible} if for all lower regular functions $\phi_1$ and $\phi_2$,    $$\inf\{\phi_1,\phi_2\}\leq\phi\implies \text{either $\phi_1\leq\phi$ or $\phi_2\leq\phi$}.$$

\begin{defn}(\cite{BRC})
An approach space $(X,\delta)$ is sober if for each inhabited and irreducible lower regular function $\phi$ of $(X,\delta)$, there is a unique $x\in X$ such that $\phi=\delta(-,\{x\})$.
\end{defn}

\begin{example}\begin{enumerate}[label=\rm(\alph*)] \setlength{\itemsep}{0pt}
  \item (\cite{BRC}) For each topological space $X$, the approach space $\omega(X)$ is   sober  if and only if $X$, as a topological space, is   sober. \item   (\cite{LiZ18a}) For each  metric space $(X,d)$, the metric approach space $(X,\Gamma(d))$ is   sober   if and only if $(X,d)$ is Smyth complete; that is,  $(X,d)$ is  separated and all of its forward Cauchy nets converge in its symmetrization (defined in an obvious way). In the case that the metric $d$ is symmetric, this conclusion was first proved in \cite{BRC}.\end{enumerate}\end{example}

\begin{prop}An approach space $(X,\delta)$ is sober if and only if for each non-vacuous irreducible lower regular function $\phi$ of $(X,\delta)$,  there exist a unique $x\in X$ and a unique $s\in[0,\infty)$  such that $\phi=s+\delta(-,\{x\})$.\end{prop}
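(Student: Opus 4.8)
The plan is to reduce the statement to the definition of sober approach space (sobriety with respect to \emph{inhabited} irreducible lower regular functions) by exploiting the tensor structure on $\mathfrak{L}X$, that is, the operation $\phi\mapsto \phi\ominus r$. The key observation is that if $\phi$ is a non-vacuous irreducible lower regular function, then $s\coloneqq\inf_{x\in X}\phi(x)<\infty$ and $\phi\ominus s$ is an \emph{inhabited} lower regular function; moreover $\phi\ominus s$ is again irreducible. Conversely, adding a constant $s$ back, $s+\psi$ is irreducible whenever $\psi$ is. So the map $\psi\mapsto s+\psi$ sets up a bijection between inhabited irreducible lower regular functions and non-vacuous irreducible lower regular functions with infimum exactly $s$.

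First I would prove that irreducibility is preserved by the two operations $\phi\mapsto\phi\ominus r$ and $\phi\mapsto\phi+r$. For $\phi\mapsto\phi\ominus r$: given $\inf\{\phi_1,\phi_2\}\leq \phi\ominus r$, one has $\inf\{\phi_1+r,\phi_2+r\}=\inf\{\phi_1,\phi_2\}+r\leq\phi$, and since $\phi_i+r$ are lower regular (by (L3)), irreducibility of $\phi$ gives (say) $\phi_1+r\leq\phi$, i.e. $\phi_1\leq\phi\ominus r$ --- here one must be a little careful, since $\phi_1+r\leq\phi$ gives $\phi_1\leq\phi\ominus r$ only where $\phi<\infty$, but where $\phi=\infty$ the inequality $\phi_1\leq\phi\ominus r=\infty$ is automatic, so it is fine. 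The argument for $\phi\mapsto\phi+r$ is symmetric and even easier: $\inf\{\phi_1,\phi_2\}\leq\phi+r$ gives $\inf\{\phi_1\ominus r,\phi_2\ominus r\}\leq\phi$, and $\phi_i\ominus r$ are lower regular. I would also note that $\phi$ inhabited plus $s=\inf_x\phi(x)$ means $\phi\ominus s$ is inhabited, and that $\phi$ non-vacuous forces $s<\infty$ (since non-vacuous elements of $\mathfrak{L}X$ need not be bounded, but $\phi$ being inhabited after subtracting $s$ is exactly the point; the genuine content is $\inf_x\phi(x)<\infty$, which holds because $\phi\neq\infty_X$, wait --- this needs the fact that an irreducible non-vacuous lower regular function has finite infimum).

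Actually the subtle point deserves spelling out: a non-vacuous lower regular function could in principle have $\inf_x\phi(x)=\infty$ while $\phi\neq\infty_X$ if $X$ is infinite? No --- $\inf_x\phi(x)=\infty$ means $\phi(x)=\infty$ for all $x$, i.e. $\phi=\infty_X$. So non-vacuous $\Leftrightarrow$ $\phi\neq\infty_X$ $\Rightarrow$ $\inf_x\phi(x)<\infty$, and this is the only place boundedness-type issues arise; there is no real obstacle. Hence $s\coloneqq\inf_x\phi(x)<\infty$ is forced, $\phi\ominus s$ is inhabited and irreducible, so by sobriety there is a unique $x\in X$ with $\phi\ominus s=\delta(-,\{x\})$, whence $\phi=s+\delta(-,\{x\})$ --- giving existence. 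For uniqueness, if $\phi=s+\delta(-,\{x\})=t+\delta(-,\{y\})$ then taking infima over $X$ and using $\inf_x\delta(x,\{x'\})=\delta(x',\{x'\})=0$ (since $\delta(x',\{x'\})=0$ and all values are $\geq 0$) forces $s=t$; then $\delta(-,\{x\})=\delta(-,\{y\})$, and the uniqueness clause in the definition of sober gives $x=y$. For the converse direction of the "if and only if": assuming the displayed condition holds for all non-vacuous irreducible $\phi$, specialize to inhabited $\phi$ (which are in particular non-vacuous), getting $\phi=s+\delta(-,\{x\})$ with $s=\inf_x\phi(x)=0$ by inhabitedness, so $\phi=\delta(-,\{x\})$ with the required uniqueness, i.e. $(X,\delta)$ is sober.

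The main obstacle, such as it is, is purely bookkeeping: making sure the $\ominus$ manipulations ("$\phi_1+r\leq\phi$ implies $\phi_1\leq\phi\ominus r$") are handled correctly at the points where $\phi$ takes the value $\infty$, and confirming that non-vacuousness of an irreducible $\phi$ really does yield finite infimum (it does, trivially, as noted). Everything else is a direct translation through the bijection $\psi\leftrightarrow s+\psi$. I do not anticipate needing any machinery beyond conditions (L1)--(L3), the definition of $\delta$ in terms of $\mathfrak{L}X$, and the definition of sobriety.
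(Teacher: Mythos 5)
Your proposal is correct and takes essentially the same route as the paper, whose entire proof is the hint ``consider the lower regular function $\phi\ominus\inf_{x\in X}\phi(x)$'' --- you simply flesh out that reduction, including the uniqueness bookkeeping and the converse. One small caveat: your general claim that $\phi\mapsto\phi\ominus r$ preserves irreducibility is justified by the step ``$\inf\{\phi_1,\phi_2\}\leq\phi\ominus r$ implies $\inf\{\phi_1,\phi_2\}+r\leq\phi$'', which can fail at points where $\phi(x)<r$; however, in the only instance you actually use, $r=s=\inf_{x\in X}\phi(x)\leq\phi$, so $(\phi\ominus s)+s=\phi$ and the step is valid, leaving the argument intact.
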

\begin{proof}Consider the lower regular function $\phi\ominus\inf_{x\in X}\phi(x)$.\end{proof}

Now we state the main result of this paper.
\begin{thm}\label{hm}
Let $(X,\delta)$ be a sober approach space.
\begin{enumerate}[label=\rm(\roman*)] \setlength{\itemsep}{0pt}
  \item  For each   proper open $[0,\infty]$-filter $\Lambda$ of $(\mathfrak{U}X,\rho_X)$, there is a unique inhabited  compact saturated function $\nu$ of $(X,\delta)$ such that $\Lambda=\rho_X(\nu,-)$.
  \item  For each decent open $[0,\infty]$-filter $\Lambda$ of $(\mathfrak{U}X,\rho_X)$, there is a unique non-vacuous compact  saturated function $\mu$ of $(X,\delta)$ such that $\Lambda=\rho_X(\mu,-)$.
\end{enumerate}
Therefore, for a sober approach space $(X,\delta)$, there is a bijection between  proper (decent, resp.) open $[0,\infty]$-filters of the metric space of upper regular functions  of $(X,\delta)$ and   inhabited (non-vacuous, resp.) compact saturated functions of $(X,\delta)$. Furthermore, this bijection is an isomorphism between  metric spaces.
\end{thm}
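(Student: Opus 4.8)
The plan is to prove the two bijections by constructing, for a given proper (respectively decent) open $[0,\infty]$-filter $\Lambda$ of $(\mathfrak{U}X,\rho_X)$, a candidate compact saturated function $\nu$ and then verifying that $\Lambda=\rho_X(\nu,-)$ and that such a $\nu$ is unique. Since for any function $\theta$ the equality $\rho_X(\theta,\lambda)=\rho_X(\uparrow\!\theta,\lambda)$ holds for all $\lambda\in\mathfrak{U}X$ (Corollary \ref{saturation}), uniqueness among \emph{saturated} functions is automatic: if $\rho_X(\nu_1,-)=\rho_X(\nu_2,-)$ then $\nu_1=\uparrow\!\nu_1=\uparrow\!\nu_2=\nu_2$ by Corollary \ref{saturation}(ii), using that both are saturated. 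So the real content is existence. The natural candidate, dictated by Corollary \ref{saturation}(ii), is
\[
\nu\coloneqq\sup\{\lambda\ominus\Lambda(\lambda)\mid \lambda\in\mathfrak{U}X\},
\]
which is automatically saturated (a supremum of coweights $\lambda\ominus\Lambda(\lambda)$, each a cotensor of an upper regular function, hence a coweight of $(X,d_\delta)$). One then must show: (a) $\rho_X(\nu,\mu)=\Lambda(\mu)$ for every $\mu\in\mathfrak{U}X$; (b) $\nu$ is inhabited when $\Lambda$ is proper (respectively non-vacuous when $\Lambda$ is decent); and (c) $\nu$ is compact, i.e. $\rho_X(\nu,-)$ is an open $[0,\infty]$-filter.

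For step (a), the inequality $\rho_X(\nu,\mu)\leq\Lambda(\mu)$ is easy: from $\nu\geq\mu\ominus\Lambda(\mu)$ one gets $\rho_X(\nu,\mu)\leq\rho_X(\mu\ominus\Lambda(\mu),\mu)\leq\Lambda(\mu)$ by Yoneda-type bookkeeping (the cotensor identity $\rho_X(r\rightarrowtail x,x)\leq r$). The reverse inequality $\Lambda(\mu)\leq\rho_X(\nu,\mu)$ is where soberness enters. The strategy here is to mimic the classical Hofmann--Mislove argument: one shows that the ``support'' of $\Lambda$, suitably defined via irreducible lower regular functions, is non-empty. Concretely, consider the set of lower regular functions $\phi$ with $\phi\cpt\nu$ small relative to $\Lambda$, use the flatness/irreducibility characterization (Theorem \ref{irideal}, applied to the $[0,\infty]$-ideal side) together with a Zorn's-lemma maximality argument in $\mathfrak{L}X$ to produce an inhabited irreducible lower regular function $\phi$ ``witnessing'' any strict deficiency $\Lambda(\mu)>\rho_X(\nu,\mu)$; soberness then yields a point $x$ with $\phi=\delta(-,\{x\})$, and evaluating at $x$ forces a contradiction, because $\delta(-,\{x\})\cpt\mu=\mu(x)$ while $\delta(-,\{x\})\cpt\nu=\nu(x)$, and the open-filter (Scott-continuity) property of $\Lambda$ pins down $\nu(x)$ against $\Lambda$. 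This is the main obstacle: setting up the right maximality argument and correctly translating the topological ``$K$ meets every member of a filtered family of closed sets'' reasoning into the quantitative statement about $\cpt$ and $\rho_X$, using Proposition \ref{compactness via lower regular} to bridge between the $\mathfrak{U}X$-side (open $[0,\infty]$-filter $\Lambda$) and the $\mathfrak{L}X$-side (where irreducibility and soberness live).

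For step (c), once (a) is established, compactness of $\nu$ is essentially automatic: $\rho_X(\nu,-)=\Lambda$ is by hypothesis an open $[0,\infty]$-filter of $(\mathfrak{U}X,\rho_X)$, which is exactly the definition of $\nu$ being compact (Definition \ref{cf}). For step (b), properness $\Lambda(r_X)=r$ translates directly via (a) into $\rho_X(\nu,r_X)=r$, i.e. $\sup_{x}(r\ominus\nu(x))=r$, i.e. $\inf_x\nu(x)=0$, which is exactly inhabitedness; for the decent case one argues that $\nu\neq\infty_X$ because $\Lambda(\lambda)\neq 0$ for some $\lambda\neq\infty_X$ forces $\nu\geq\lambda\ominus\Lambda(\lambda)$ to be bounded-below-failing... more precisely, $\nu$ being $\infty_X$ would make $\rho_X(\nu,\lambda)=0$ for all bounded $\lambda$, contradicting decency via (a). Finally, the two displayed correspondences of the theorem follow by combining existence (this construction) with uniqueness (Corollary \ref{saturation}) and the already-noted converse direction (the Proposition immediately preceding Theorem \ref{hm}, sending a compact saturated inhabited/non-vacuous $\theta$ to the proper/decent open $[0,\infty]$-filter $\rho_X(\theta,-)$), checking that the two assignments are mutually inverse — which is immediate from $\rho_X(\nu,-)=\Lambda$ and $\uparrow\!\theta=\theta$.
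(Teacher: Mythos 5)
Your skeleton matches the paper's at the outer level: uniqueness among saturated functions via Corollary \ref{saturation}, the candidate $\nu=\sup\{\lambda\ominus\Lambda(\lambda)\mid\lambda\in\mathfrak{U}X\}$ (which is what Corollary \ref{saturation}(ii) forces), compactness of $\nu$ coming for free once $\rho_X(\nu,-)=\Lambda$ is known, and inhabitedness from properness. The easy inequality $\rho_X(\nu,\mu)\leq\Lambda(\mu)$ is also fine. But the heart of the theorem is the reverse inequality $\Lambda(\mu)\leq\rho_X(\nu,\mu)$, and there your proposal stops at a description of a strategy (``mimic the classical Hofmann--Mislove argument'', ``a Zorn's-lemma maximality argument in $\mathfrak{L}X$'') which you yourself label the main obstacle. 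That step is precisely where all the work lies, and it is not carried out: you do not specify the poset on which Zorn's lemma is applied, why its chains have upper bounds (this is where openness of $\Lambda$, transported to a Scott-continuity condition on the $\mathfrak{L}X$-side, is indispensable), why the maximal element is irreducible \emph{and non-vacuous} (non-vacuousness is needed before soberness can be invoked in the form $\psi=s+\delta(-,\{x\})$), nor how to pass from the coweight side $\mathfrak{U}X$, where all non-vacuous elements are bounded, to the weight side $\mathfrak{L}X$, where lower regular functions may be unbounded. The paper isolates exactly this content in Lemma \ref{HMC}: it defines $\Phi(\phi)=b\ominus\Lambda(b\ominus\phi)$ for bounded $\phi$ (well-definedness resting on the identity $\Lambda(\lambda+r)=\Lambda(\lambda)+r$), extends to unbounded $\phi$ by $\Phi(\phi)=\sup_r\Phi(\inf\{\phi,r_X\})$, verifies (C1)--(C3), and then runs Zorn on $\mathcal{A}_r=\{\xi\in\mathfrak{L}X\mid\phi\leq\xi,\ \Phi(\xi)\leq r\}$. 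Without an equivalent of this lemma, your argument is a plan rather than a proof.

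A second, smaller gap concerns case (ii). You suggest the same construction works directly for a decent filter, with non-vacuousness of $\nu$ read off from decency. However, the identity $\Lambda(\lambda+r)=\Lambda(\lambda)+r$, which the paper derives from \emph{properness} (antitonicity of the coweight $\Lambda$ plus $\Lambda(r_X)=r$ give $\Lambda(\lambda+r)\geq r$) and which underlies the well-definedness of $\Phi$, genuinely fails for decent non-proper filters (e.g.\ a shifted filter $\rho_X(\nu+s,-)$ has $\Lambda(0_X+r)=r\ominus s\neq r$ for $r<s$). This is why the paper reduces (ii) to (i) by the shift $\Lambda'(\lambda)=\Lambda(\lambda+s)$ with $s=\sup\{r\mid\Lambda(r_X)=0\}$ and then sets $\mu=\nu+s$. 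Your sketch does not account for this, so even granting the hard step in the proper case, the decent case would still need the shifting argument (or a version of the hard step that avoids the identity above).
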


%A lemma first.

\begin{lem}\label{HMC}
Let $(X,\delta)$ be an approach space. If $\lambda$ is a compact saturated function of $(X,\delta)$, then the map
$$\Phi \colon \mathfrak{L}X\lra[0,\infty], \quad \Phi(\phi)=\phi\cpt\lambda$$ satisfies the following conditions:
\begin{enumerate}[label=\rm(C\arabic*)] \setlength{\itemsep}{0pt}
  \item For each directed subset $\{\phi_i\}_i$ of $\mathfrak{L}X$, $\Phi(\sup\limits_{i\in I}\phi_i)=\sup\limits_{i\in I} \Phi(\phi_i)$.
  \item  For all $\phi_1,\phi_2\in\mathfrak{L}X$, $ \Phi(\inf\{\phi_1,\phi_2\})=\min\{ \Phi(\phi_1), \Phi(\phi_2)\}$.
  \item  For all $\phi\in\mathfrak{L}X$ and $r\in[0,\infty]$, $ \Phi(\phi+r) =  \Phi(\phi)+r$.
\end{enumerate}
Conversely, if $(X,\delta)$ is a sober approach space, then for each map $\Phi\colon \mathfrak{L}X\lra[0,\infty]$ that satisfies {\rm (C1)-(C3)}, there is a compact saturated function $\lambda$ of $(X,\delta)$ such that $\Phi(\phi)=\phi\cpt\lambda$.
\end{lem}
\begin{proof}
We leave it to the reader to check that for each compact saturated function $\lambda$ of $(X,\delta)$, the map $\Phi(\phi)=\phi\cpt\lambda$ satisfies (C1)-(C3).

For the converse part, suppose that $\Phi\colon \mathfrak{L}X\lra[0,\infty]$   satisfies {\rm (C1)-(C3)}.

First of all, by (C2), (C3) and Proposition \ref{non-expansive via tensor} one sees that $\Phi\colon (\mathfrak{L}X,\rho_X)\lra([0,\infty],d_L)$ is a non-expansive map. We claim that the function   $$\lambda\colon X\lra[0,\infty],\quad \lambda(x)=\Phi(\delta(-,\{x\}))$$
satisfies the requirement; that is, $\lambda$ is a compact saturated function of $(X,\delta)$ and $\Phi(\phi)=\phi\cpt\lambda$.

Since
$$\lambda(x)+\delta(x,\{y\})=\Phi(\delta(-,\{x\}))+\rho_X(\delta(-,\{x\}), \delta(-,\{y\})) \geq\Phi(\delta(-,\{y\}))=\lambda(y),$$
it follows that $\lambda$ is saturated.

Next we show that for each $\phi\in\mathfrak{L}X$, $\Phi(\phi)=\phi\cpt\lambda$. This equality together with (C1) implies that $\lambda$ is  compact. On one hand,
\begin{align*}
\phi\cpt\lambda & = \inf_{x\in X}(\phi(x)+\Phi(\delta(-,\{x\})))\\
& = \inf_{x\in X}(\rho_X(\delta(-,\{x\}),\phi)+\Phi(\delta(-,\{x\}))) &\text{(Equation (\ref{YonedaforLR}))}\\
& \geq \Phi(\phi).
\end{align*}
On the other hand, for each $r\in[0,\infty)$ with $\Phi(\phi)\leq r$, let
$$\mathcal{A}_r=\{\xi\in\mathfrak{L}X\mid\phi\leq\xi, \Phi(\xi)\leq r\}.$$
By (C1) and Zorn's lemma, $\mathcal{A}_r$ has a maximal element, say $\psi$. By (C2) one sees that $\psi$ is an irreducible lower regular function; by (C3) one sees that $\psi\neq\infty_X$.  Since $(X,\delta)$ is sober, there exist a unique $x$ and a unique $s\in[0,\infty)$  such that $\psi=s+\delta(-,\{x\})$. Since $\psi\in\mathcal{A}_r$, it follows that
\begin{align*}\phi\cpt\lambda&\leq\psi\cpt\lambda\\ &= \inf_{z\in X}(s+\delta(z,\{x\})+\Phi(\delta(-,\{z\})))\\ &= s+\inf_{z\in X}\Big(\rho_X(\delta(-,\{z\}),\delta(-,\{x\}))+\Phi(\delta(-,\{z\}))\Big)\\ &=s+\Phi(\delta(-,\{x\})) & \text{($\Phi$ is non-expansive)}\\ &=\Phi(\psi) & \text{(C3)}\\ &\leq r.\end{align*}
Therefore, $\phi\cpt\lambda\leq\Phi(\phi)$ by arbitrariness of $r$. \end{proof}

\begin{proof}[Proof of Theorem \ref{hm}]
(i) Suppose that $\Lambda\colon \mathfrak{U}X\lra[0,\infty]$ is a  proper open $[0,\infty]$-filter of $(\mathfrak{U}X,\rho_X)$. First of all, we assert that $\Lambda$ satisfies that $$\Lambda(\lambda+r)=\Lambda(\lambda)+r$$ for all $\lambda\in\mathfrak{U}X$ and $r\in[0,\infty]$. If $\lambda=\infty_X$, the equality is obvious; if $\lambda$ is bounded, then by Proposition \ref{real-valued filter} we have that $\Lambda(\lambda+r)\geq r$ and \[\Lambda(\lambda) = \Lambda((\lambda+r)\ominus r)= \Lambda(\lambda+r)-r,\]   hence $\Lambda(\lambda+r)=\Lambda(\lambda)+r$.

We turn to the proof of the conclusion. Uniqueness of $\nu$ follows from Corollary \ref{saturation}; as for  existence, define a map $\Phi\colon \mathfrak{L}X\lra[0,\infty]$ as follows: for each lower regular function $\phi$ of $(X,\delta)$, let
$$\Phi(\phi)=\sup_{x\in X}\phi(x)\ominus \Lambda\Big(\sup_{x\in X}\phi(x)\ominus\phi\Big)$$ if $\phi$ is bounded; and let
$$\Phi(\phi)=\sup_{r\in [0,\infty)}\Phi(\inf\{\phi,r_X\})$$
if $\phi$ is not bounded.

Before proceeding, we notice that, as a  consequence of the equality $\Lambda(\lambda+r)=\Lambda(\lambda)+r$, for each bounded $\phi$ and each bound $b$ of $\phi$,   it holds that   \[\Phi(\phi)=b- \Lambda(b-\phi).\]

We claim that $\Phi\colon \mathfrak{L}X\lra[0,\infty]$ satisfies the conditions  (C1)-(C3) in Lemma \ref{HMC}. Here we check (C1) for example. Let $\{\phi_i\}_{i\in I}$ be a directed family of lower regular functions. If $\sup_{i\in I}\phi_i$ is bounded, pick an upper bound  $b$ of all $\phi_{i}$, then
\begin{align*}
\Phi(\sup_{i\in I}\phi_i) & =b\ominus \Lambda(b\ominus\sup_{i\in I}\phi_i)\\
& = b\ominus \Lambda(\inf_{i\in I}b\ominus\phi_i)\\
& = b\ominus \inf_{i\in I}\Lambda(b\ominus \phi_i)\\
& = \sup_{i\in I} b\ominus \Lambda(b\ominus \phi_i)\\
& = \sup_{i\in I}\Phi(\phi_i).
\end{align*}
If $\sup_{i\in I}\phi_i$ is not bounded, we have
\begin{align*}
\Phi(\sup_{i\in I}\phi_i) & = \sup_{r\in[0,\infty)}\Phi(\inf\{\sup_{i\in I}\phi_i,r_X\})\\
& = \sup_{r\in[0,\infty)}\Phi(\sup_{i\in I}\inf\{\phi_i,r_X\})\\
& = \sup_{r\in[0,\infty)}\sup_{i\in I}\Phi(\inf\{\phi_i,r_X\})\\
& = \sup_{i\in I}\sup_{r\in[0,\infty)}\Phi(\inf\{\phi_i,r_X\})\\
& = \sup_{i\in I}\Phi(\phi_i).
\end{align*}

Since $(X,\delta)$ is sober, by Lemma \ref{HMC}, there is a compact saturated function $\nu$ of $(X,\delta)$ such that $\Phi(\phi)=\phi\cpt\nu$. We  show that  $\nu$ satisfies the requirement. First, it follows from $0_X\cpt \nu=\Phi(0_X)=0$  that $\nu$ is inhabited. Second, we show that $\rho_X(\nu,\lambda)= \Lambda(\lambda)$ for all $\lambda\in \mathfrak{U}X$. The equality is trivial if $\lambda=\infty_X$; if $\lambda$ is   bounded, pick a bound $b$ of $\lambda$, then
\begin{align*}
\rho_X(\nu,\lambda) & =\rho_X(\nu,b\ominus(b\ominus\lambda))\\ &= b\ominus ((b\ominus\lambda)\cpt\nu) &\text{(Lemma \ref{sub vs tensor})}\\
& = b\ominus\Phi(b\ominus\lambda)\\
& =  b\ominus( b-\Lambda( b- (b-\lambda))) &\text{($b$ is a bound of $b-\lambda$)} \\
&= b-( b-\Lambda(\lambda)) &(\Lambda(\lambda)\leq b)\\
& = \Lambda(\lambda).
\end{align*}

(ii) Suppose $\Lambda\colon \mathfrak{U}X\lra[0,\infty]$ is a  decent open $[0,\infty]$-filter of $(\mathfrak{U}X,\rho_X)$. Let
$$s=\sup\{r\in[0,\infty)\mid \Lambda(r_X)=0\}.$$
Since $\Lambda$ is decent,  $s<\infty$. Then for each $r\in[0,\infty)$, $\Lambda(r_X+s)=r$. Hence the map
$$\Lambda'\colon \mathfrak{U}X\lra[0,\infty],\quad \Lambda'(\lambda)=\Lambda(\lambda+s)$$
is a  proper open $[0,\infty]$-filter of $(\mathfrak{U}X,\rho_X)$. By (i), there exists a unique inhabited compact saturated function $\nu$ of $(X,\delta)$ such that  $\Lambda'=\rho_X(\nu,-)$. Since for each $\lambda\in\mathfrak{U}X$ we have
\begin{align*}
\Lambda(\lambda) & = \Lambda((\lambda+s)\ominus s)\\
& = \Lambda(\lambda+s)\ominus s\\
& = \Lambda'(\lambda)\ominus s\\
& = \rho_X(\nu,\lambda)\ominus s\\
& = \rho_X(\nu+s,\lambda),
\end{align*}
then $\mu\coloneqq\nu+s$ is a compact saturated function of $(X,\delta)$ such that $\Lambda=\rho_X(\mu,-)$.

Finally, we show that the bijections in (i) and (ii) are isomorphisms between metric spaces. It suffices to show that for all  saturated functions $\theta$ and $\lambda$,
$$\rho_X(\theta,\lambda)=\rho_{\mathfrak{U}X}(\rho_X(\lambda,-),\rho_X(\theta,-)).$$
For this we only need to check that $\rho_X(\theta,\lambda)\leq\rho_{\mathfrak{U}X}(\rho_X(\lambda,-),\rho_X(\theta,-))$. To this end, we show that for each $r\in(0,\infty)$, if $r<\rho_X(\theta,\lambda)$, then $r\leq\rho_{\mathfrak{U}X}(\rho_X(\lambda,-),\rho_X(\theta,-))$. Assume that $r<\rho_X(\theta,\lambda)$, then there exists an $x\in X$ such that $r\leq\lambda(x)\ominus\theta(x)$. If $\lambda(x)<\infty$, consider the upper regular function $\mu\coloneqq\lambda(x)\ominus\delta(-,\{x\})$, we have
\begin{align*}
\rho_X(\theta,\mu)
& = \lambda(x)\ominus(\delta(-,\{x\})\cpt\theta) & \text{(Lemma \ref{sub vs tensor})}\\
& = \lambda(x)\ominus\theta(x) & \text{(Equation \ref{YonedaforRL})}\\
& \geq r.
\end{align*}
A similar calculation gives that
$\rho_X(\lambda,\mu)=\lambda(x)\ominus\lambda(x)=0$.
Therefore,
\[r\leq\rho_X(\theta,\mu)\ominus\rho_X(\lambda,\mu)\leq\rho_{\mathfrak{U}X}(\rho_X(\lambda,-),\rho_X(\theta,-)).\]
If $\lambda(x)=\infty$, then $\theta(x)<\infty$, consider the upper regular function $(\theta(x)+r)\ominus\delta(-,\{x\})$, we have
\[\rho_X(\theta,(\theta(x)+r)\ominus\delta(-,\{x\}))=r,\]
and
\[\rho_X(\lambda,(\theta(x)+r)\ominus\delta(-,\{x\}))=0.\]
Hence $r\leq\rho_{\mathfrak{U}X}(\rho_X(\lambda,-),\rho_X(\theta,-)).$ \end{proof}

The   Hofmann-Mislove theorem for topological spaces is contained in Theorem \ref{hm}. To see this, let $X$ be a sober topological space, and let $\mathcal{F}$ be a proper open filter of the open-set lattice of $X$.
It is readily verified that
$$\Lambda\colon \mathfrak{U}(\omega(X))\lra[0,\infty],\quad \Lambda(\lambda)=\inf\{\rho_X(0_U,\lambda)\mid U\in\mathcal{F}\}$$
is a proper open $[0,\infty]$-filter of the metric space of upper regular functions of the  approach space $\omega(X)$. Since $\omega(X)$ is sober, there is a unique  inhabited and compact saturated function $\mu$ of $\omega(X)$ such that  $$\Lambda(\lambda)=\rho_X(\mu,\lambda)$$ for all  $\lambda\in\mathfrak{U}(\omega(X))$. We leave it to the reader to check that $\mu$ takes value either $0$ or $\infty$. Let $$Q = \{x\in X\mid \mu(x)=0\}.$$ Then $Q$ is a compact saturated subset of $X$ such that for each open set $U$ of $X$,
\(Q\subseteq U\) if and only if \(  U\in\mathcal{F} \). This gives the Hofmann-Mislove theorem for topological spaces:

\begin{cor}{\rm(Hofmann-Mislove theorem for topological spaces)}
Let $X$ be a sober topological space, the ordered set of open filters of the open-set lattice and the ordered set (ordered by reverse inclusion) of compact saturated subsets of $X$ are isomorphic.
\end{cor}

We end this paper with an example which shows that, unlike in the classical context \cite[page 147]{Gierz2003}, sobriety is not indispensable in Theorem \ref{hm}.

\begin{example} Let $X=([0,\infty],\Gamma(d_L))$ and let $Y$ be the subspace
 $(0,\infty]$ of $X$.

The function
$$\psi\colon (0,\infty]\lra[0,\infty], \quad \psi(x)=0.$$
is  a $[0,\infty]$-ideal of the metric space $((0,\infty],d_L)$, hence an inhabited and irreducible lower regular function of $Y$, but $\psi\neq\Gamma(d_L)(-,\{x\})$ for any $x\in (0,\infty]$, which shows that $Y$ is not sober.

For each function $\theta\colon[0,\infty]\lra[0,\infty]$, denote its restriction on $(0,\infty]$ by $\theta^\dagger$.
Since every coweight of $\dl$ is continuous at $0$, one sees that
$$(-)^\dagger\colon\mathfrak{U}X\lra\mathfrak{U}Y,\quad \lambda\mapsto\lambda^\dagger$$
is an isomorphism of metric spaces. Thus, for each  proper open $[0,\infty]$-filter $\Lambda$  of $\mathfrak{U}Y$,
$$\Lambda'\colon\mathfrak{U}X\lra[0,\infty],\quad \Lambda'(\lambda)=\Lambda(\lambda^\dagger)$$
is a proper open $[0,\infty]$-filter of $\mathfrak{U}X$. Since the metric space $\dl$ is Smyth complete, $X=([0,\infty],\Gamma(d_L))$ is   sober    \cite{LiZ18a}. Then there is an inhabited compact saturated function $\nu$ of $X$ such that $\Lambda'=\rho_X(\nu,-)$. Since $\nu$ is continuous at $0$,  it follows that $\nu^\dagger$ is an inhabited compact saturated function of $Y$ and $\Lambda=\rho_Y(\nu^\dagger,-)$. This shows that the metric space of proper  open $[0,\infty]$-filters of   upper regular functions of $Y$ and the opposite of the metric space of   inhabited  compact saturated functions of $Y$ are  isomorphic. \end{example}

\vskip 8pt

\noindent Junche Yu\\
School of Mathematical Sciences, Beihang University, Beijing, China, cqyjc@icloud.com

\vskip 5pt

\noindent Dexue Zhang\\
School of Mathematics, Sichuan University, Chengdu, China, dxzhang@scu.edu.cn

\end{document}